\numberwithin{equation}{section}
\newtheorem{letterthm}{Theorem}
\newtheorem{lettercor}[letterthm]{Corollary}
\newtheorem{theorem}{Theorem}[section]
\newtheorem{lemma}[theorem]{Lemma}
\newtheorem{corollary}[theorem]{Corollary}
\newtheorem{proposition}[theorem]{Proposition}
\newtheorem{observation}[theorem]{Observation}
\theoremstyle{definition} 
\newtheorem{definition}[theorem]{Definition}
\newtheorem{notation}[theorem]{Notation}
\newtheorem{remark}[theorem]{Remark}
\newtheorem{example}[theorem]{Example}
\newtheorem{construction}[theorem]{Construction}
\newcommand{\tb}{\textcolor{black}}
\newcommand{\act}{\curvearrowright}
\newcommand{\at}{atomic}
\newcommand{\bluebullet}{\textcolor{blue}{\bullet}}
\newcommand{\greenbullet}{\textcolor{green}{\bullet}}
\newcommand{\redbullet}{\textcolor{red}{\bullet}}
\newcommand{\cC}{\mathcal C}
\newcommand{\C}{\mathbf C}
\DeclareMathOperator{\Comm}{Comm}
\newcommand{\fF}{\mathfrak F}
\DeclareMathOperator{\Fix}{Fix}
\newcommand{\fH}{\mathfrak H}
\newcommand{\scrH}{\mathscr H}
\DeclareMathOperator{\id}{id}
\DeclareMathOperator{\Ind}{Ind}
\newcommand{\fK}{\mathfrak K}
\newcommand{\scrK}{\mathscr K}
\DeclareMathOperator{\Leaf}{Leaf}
\DeclareMathOperator{\length}{length}
\newcommand{\limw}{\lim_{w\in W}}
\DeclareMathOperator{\Mon}{Mon}
\newcommand{\N}{\mathbf{N}}
\newcommand{\NInd}{\text{Ind-mixing}}
\newcommand{\cO}{\mathcal O}
\newcommand{\ot}{\otimes}
\newcommand{\ov}{\overline}
\newcommand{\cP}{\mathcal P}
\newcommand{\cQ}{\mathcal{Q}}
\newcommand{\R}{\mathbf{R}}
\DeclareMathOperator{\Ran}{Ran}
\DeclareMathOperator{\Root}{Root}
\newcommand{\bS}{\mathbf S^1}
\DeclareMathOperator{\Stab}{Stab}
\newcommand{\cspan}{\overline{\textrm{span}}}
\DeclareMathOperator{\supp}{supp}
\newcommand{\fT}{\mathfrak T}
\DeclareMathOperator{\tar}{tar}
\newcommand{\ti}{\tilde}
\newcommand{\cU}{\mathcal U}
\newcommand{\fU}{\mathfrak U}
\newcommand{\fV}{\mathfrak V}
\newcommand{\varep}{\varepsilon}
\DeclareMathOperator{\Ver}{Ver}
\newcommand{\fW}{\mathfrak W}
\newcommand{\fX}{\mathfrak X}
\newcommand{\scrX}{\mathscr X}
\newcommand{\fY}{\mathfrak Y}
\newcommand{\Y}{\wedge}
\newcommand{\Z}{\mathbf{Z}}
\newcommand{\fZ}{\mathfrak Z}
\DeclarePairedDelimiterX{\norm}[1]{\lVert}{\rVert}{#1}
\providecommand{\keywords}[1]{\textbf{\textit{Index terms---}} #1}
\begin{document}
	
\title[Atomic Pythagorean representations]{Decomposition of Pythagorean representations of R.~Thompson's groups}
\thanks{
	AB is supported by the Australian Research Council Grant DP200100067.\\
	DW is supported by an Australian Government Research Training Program (RTP) Scholarship.}
\author{Arnaud Brothier and Dilshan Wijesena}
\address{Arnaud Brothier, Dilshan Wijesena\\ School of Mathematics and Statistics, University of New South Wales, Sydney NSW 2052, Australia}
\email{arnaud.brothier@gmail.com\endgraf
	\url{https://sites.google.com/site/arnaudbrothier/}}

\begin{abstract}
We continue to study Pythagorean unitary representation of Richard Thompson's groups $F,T,$ and $V$ that are built from a single isometry from a Hilbert space to its double.
By developing powerful diagrammatically based techniques we show that each such representation splits into a diffuse and an atomic parts.
We previously proved that the diffuse part is Ind-mixing: it does not contain induced representations of finite-dimensional ones.
We fully decompose the atomic part: the building blocks are monomial representations arising from a precise family of parabolic subgroups of $F$.
\end{abstract}

\maketitle

\keywords{{\bf Keywords:} Thompson's groups, Pythagorean representations, fraction groups, Pythagorean C*-algebras}




\section*{Introduction}
Thompson's groups $F,T,V$ are some of the most fascinating groups which naturally appear in various branches of mathematics, see the expository article \cite{Cannon-Floyd-Parry96}.
Understanding groups is done by constructing and studying their actions that are manifestations of the symmetries they encode. 
Thompson groups are complicated groups for which constructing actions is difficult which explains our lack of understanding.
Although, they are fraction groups of very simple categories of diagrams that are made of binary trees: a point of view first appearing in the work of Brown (but most certainly goes back to Thompson) that we shall deeply exploit in this article \cite{Brown87}.
Indeed, the category giving $F$ is generated by the single tree with two leaves. 
Jones discovered that a single morphism in any category yields an action of Thompson group $F$ which may extend to $T$ and $V$ under additional assumptions \cite{Jones17,Jones18}.

This is a particular case of Jones' technology that has been already successfully applied for constructing actions on operator algebras, actions on groups, and unitary actions which will be our main use in this article \cite{Jones18-Hamiltonian, Brothier-Stottmeister19,Brothier22,Brothier21,Jones21,Brothier-Jones19-HnonT}. 
Beyond producing actions of the Thompson groups we may use this technology to produce among other new knot invariants, obtaining natural subgroups of the Thompson groups, and for studying certain non-commutative probabilities \cite{Jones18,Grymski-Peters22,Golan-Sapir17,Aiello-Nagnibeda21,Kostler-Krishnan-Wills20,Kostler-Krishnan22}.
Finally, this technology is useful for studying other groups built from categories such as various Thompson-like groups \cite{Brothier23,Brothier22-FSI,Brothier22-FSII}.
We invite the interested to consult the following three recent surveys for more information \cite{Jones19-survey, Brothier19-survey, Aiello22-survey}.

If $\fH$ is a complex Hilbert space and $R:\fH\to\fH\oplus\fH$ is an linear isometry, then we can construct a {\it Pythagorean} unitary representation (in short P-representation) of the three Richard Thompson groups $F,T,$ and $V$ \cite{Brothier-Jones19-Pyth}.
The isometry $R$ decomposes into the direct sum of two operators $A,B\in B(\fH)$ satisfying the {\it Pythagorean identity}:
\begin{equation}\label{eq:Pyth}
A^*A+B^*B=\id_\fH.
\end{equation}
The {\it Pythagorean algebra} is the universal C*-algebra generated by two operators satisfying relation \eqref{eq:Pyth} \cite{Brothier-Jones19-Pyth}.
By definition, the representations of $P$ are in bijection with the P-representations of $F$ and a canonical process extends these representations to $T$ and $V$.

Further, the celebrated Cuntz algebra $\cO$ is an obvious quotient of $P$ (additionally require that $A$ and $B$ are surjective partial isometries) and thus any representation of $\cO$ provides a representation of $P$ \cite{Cuntz77}.
What is very surprising is that a canonical lifting construction permits to construct a representation of $\cO$ from one of $P$ and moreover all representations of $\cO$ arise in that way \cite[Theorem 7.1]{Brothier-Jones19-Pyth}.
Hence, $P$ is not only useful for studying the Thompson groups but also for the Cuntz algebra.
This strengthens in particular a previous remarkable connection between the Thompson groups and the Cuntz algebra \tb{due independently by Birget and Nekrashevych \cite{Birget04,Nekrashevych04}.}

In most practical examples $\fH=\C^d$ and $A,B$ are $d$ by $d$ matrices. Now, the associated Hilbert space $\scrH$ on which $V$ and $\cO$ act contains $\fH$ but is larger: it is always infinite dimensional (it can thought as being equal to all trees whose leaves are decorated by vectors of $\fH$). 
The force of this construction is to be able to easily construct representations of $F,T,V,$ and $\cO$ using finite-dimensional linear algebra. 
Moreover, properties of the infinite-dimensional representations of $F,T,V,\cO$ can be read from the initial data $(A,B,\fH)$ which is much easier to apprehend.

{\bf Brief outline of the article.}
In this article we continue our systematic study of P-representations initiated in \cite{Brothier-Wijesena22}.
We focus on representations of the group $F$ (the smallest Thompson group) but our study can easily be translated (and in fact simplified) to treat the $T$ and $V$-cases.
We previously introduce the key notion of a {\it diffuse} Pythagorean pair $(A,B)$ (i.e.~the words in $A,B$ tends to zero for the strong operator topology, e.g.~$A,B$ have norms strictly smaller than 1) and proved that under this assumption the associated P-representation $\sigma_{A,B}$ of $F$ is {\it Ind-mixing} (i.e.~does not contain the induced representation $\Ind_H^F\theta$ for $\theta$ finite-dimensional representation of subgroup $H\subset F$) \cite{Brothier-Wijesena22}.
In that case we say that $\sigma_{A,B}$ is a {\it diffuse} P-representation.
In this present article we define a negation of being diffuse called {\it atomic} for a Pythagorean pair (in short P-pair) of operators $(A,B)$ and for its associated P-representation.
The first main result of this article shows that given any P-representation $\sigma$ there is a direct decomposition $\sigma=\sigma_d\oplus \sigma_a$ where $\sigma_d,\sigma_a$ are themselves P-representations and are the diffuse and atomic parts of $\sigma$, respectively. We say that $\sigma_a$ is an atomic P-representation just like $\sigma_d$ is a diffuse P-representation.
The second main result of this article consists in describing precisely the atomic part: we show that it is built from a precise list of monomial representations associated to parabolic subgroups and certain one-dimensional representations.
When the initial Hilbert space is of finite dimension $d$ we obtain that $\sigma_a$ is a finite direct sum of irreducible ones and provides the complete list of admissible representations that may occur.

{\bf Detailed content of the article and main results.}
In this article we consider P-representations $(\sigma,\scrH)$ of Thompson's group $F$ constructed from P-pairs $(A,B)$ acting on some Hilbert space $\fH$. 
If $p$ is an infinite binary sequence (called a {\it ray}) and $p_n$ the sub-word of the first $n$ letters, then $p_n$ defines an operator of $\fH$ (using the letters $A$ and $B$).
We define $\fU\subset\fH$ the subset of vectors $\xi$ satisfying that $\lim_n\| p_n \xi\|=0$ for all ray $p$.
This is a Hilbert subspace that is closed under the action of $A$ and $B$ and thus provides a sub-representation $(\sigma_\fU,\scrH_\fU)$ of $(\sigma,\scrH)$.
The pair of restrictions $A|_\fU,B|_\fU$ provides a {\it diffuse} P-pair and thus $\tb{\sigma_d:=} \sigma_\fU$ is Ind-mixing as proved in our previous article \cite{Brothier-Wijesena22}.
\tb{We additionally construct a subspace $\fV \oplus \fW \subset \fH$ which is also closed under $A$ and $B$ (but not necessarily topologically closed), and is orthogonal to $\fU$. As above, we consider the sub-representation $\sigma_a := \sigma_{\fV\oplus \fW}$ that we call the {\it atomic} part of $\sigma$. Furthermore, we define the subspace $\fZ := (\fU \oplus \fV \oplus \fW)^\perp$ which is not necessarily closed under $A$ and $B$.
This yields the following (canonical) orthogonal decomposition:
$$\fH=\fU\oplus\ov\fV\oplus\ov \fW\oplus \fZ.$$}
First, we prove that \tb{when $\fZ$ is finite-dimensional we have the decomposition $\sigma = \sigma_d \oplus \sigma_a$. This shows that the subspace $\fZ\subset \fH$ is not "viewed" by the P-representation $\sigma:F\act\scrH$}.
Second, we completely characterise the building blocks of \tb{$\sigma_a$} by providing a precise list of {\it monomial} representations.

More precisely, consider the usual action of $F$ on the Cantor space equal to the set of infinite binary strings $\{0,1\}^\N$ (i.e.~the set of all rays in the rooted infinite regular binary tree) that we view over $[0,1]$ via the classical continuous $F$-equivariant surjection 
$$(x_n)_n\mapsto \sum_{n\geq 1} 2^{-n}x_n.$$
(We use $x$ rather than $p$ as $x_n$ denotes the $n$-th letter while $p_n$ denotes the first $n$ letters.)
The previous map permits to identify $\{0,1\}^\N$ with $[0,1]$ plus an extra copy of the dyadic rational in the open interval $(0,1)$.
This allows us to define derivations $g'(p)$ for $g\in F$ and $p$ a ray.
We consider the {\it parabolic subgroup} 
$$F_p:=\{g\in F:\ g(p)=p\}$$ 
and its (normal) subgroup 
$$\widehat F_p=\{g\in F_p:\ g'(p)=1\}.$$
Given any modulus one complex number $\varphi\in S^1$ and ray $p$ we form a one-dimensional representation
$$\chi_\varphi^p:F_p\to S^1, g\mapsto \varphi^{\log_2(g'(p))}.$$
We associate to it the induced representation $\Ind_{F_p}^F\chi_\varphi^p$ of $F$ acting on $\ell^2(F/F_p)$.
In particular, when $\varphi=1$, then $\Ind_{F_p}^F\chi_\varphi^p=\lambda_{F/F_p}$ is the quasi-regular representation associated to $F_p\subset F$ and when $p$ is an endpoint (i.e.~$p=0$ or $1$), then $F_p=F$ and $\chi_\varphi^p:=\Ind_{F_p}^F\chi_\varphi^p$ is a one-dimensional representation.
We have the following general theorem.

\begin{letterthm}\label{theo:A}
Let $(A,B)$ be a Pythagorean pair acting on $\fH$ with associated group representation $\sigma:F\act \scrH$. 
The following assertions are true.
\begin{enumerate}
\item \tb{We have the following decomposition of $\fH$ into certain orthogonal topologically closed subspaces:
$$\fH=\fU\oplus \overline{\fV} \oplus \overline{\fW} \oplus \fZ$$
(see Definition \ref{subspace of fH defintiion} for details).}
\tb{\begin{enumerate}
\item The subspace $\fU$ (resp.~$\fV\oplus\fW$) is closed under the action of $A$ and $B$.
\item The representation $\sigma_d:F\act \scrH_\fU$ associated to $\fU$ is diffuse.
\item The representation $\sigma_a:F\act \scrH_{\fV\oplus\fW}$ associated to $\fV\oplus\fW$ is atomic.
\item \label{subitem:sigma-decomp} When $\dim(\fZ)<\infty$, then $\sigma$ decomposes as $\sigma_d\oplus\sigma_a$.
\end{enumerate}}
\item When $\fH$ is separable, then $\sigma_a$ is a direct integral of representations appearing in:
\begin{equation}\label{eq:list}\{ \Ind_{F_p}^F\chi_\varphi^p:\ p \text{ ray }, \varphi\in S^1 \} \cup \{ \lambda_{F/\widehat F_p}:\ p \text{ rational ray } \}.
\end{equation}
\item When $\dim(\fH)<\infty$, then $\fW=\{0\}$ and $\sigma_a$ is a finite direct sum of representations appearing in:
\begin{equation}\label{eq:list-FD}\{ \Ind_{F_p}^F\chi_\varphi^p:\ p \text{ rational ray}, \varphi\in S^1 \}.\end{equation}
\end{enumerate}
\end{letterthm}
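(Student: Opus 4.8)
The plan is to build the proof on the canonical splitting of $\fH$ and then to recognise each cyclic piece of the atomic part as one of the listed monomial representations. For item (1a) I would verify invariance of $\fU$ by a ray-shifting computation: writing $X_0=A$, $X_1=B$ and $p_n=X_{x_n}\cdots X_{x_1}$ for a ray $p=(x_1,x_2,\dots)$, one has $\|p_n(A\xi)\|=\|(0p)_{n+1}\xi\|$ and $\|p_n(B\xi)\|=\|(1p)_{n+1}\xi\|$, where $0p$ and $1p$ are the prepended rays; hence $\xi\in\fU$ forces $A\xi,B\xi\in\fU$, and the same bookkeeping gives the $A,B$-invariance of $\fV\oplus\fW$. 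Item (1b) is then free: by construction $(A|_\fU,B|_\fU)$ is a diffuse P-pair, so the Ind-mixing theorem of \cite{Brothier-Wijesena22} shows $\sigma_d$ is diffuse. Item (1c) amounts to checking that the P-pair restricted to $\fV\oplus\fW$ has trivial diffuse and $\fZ$ components, which follows from the orthogonality of the four summands together with their defining limit conditions.

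The delicate point of Part (1) is item (1d). Although $\fZ$ is orthogonal to $\fU\oplus\fV\oplus\fW$ at the level of the seed space $\fH$, it is not clear a priori that the inductive-limit space $\scrH$ ignores it, since $\scrH$ is assembled from trees whose leaves carry vectors of $\fH$ and the connecting maps apply $R=(A,B)$. The mechanism I would exploit is that $\fZ$ is precisely the part failing to be $A,B$-invariant: applying $R$ to a $\fZ$-vector pushes most of its mass into the invariant subspace $\fU\oplus\fV\oplus\fW$, leaving only a residual $\fZ$-component at the next stage. When $\dim\fZ<\infty$ one can iterate this and obtain a uniform bound forcing the residual $\fZ$-mass to decay to $0$ along the inductive system; consequently $\scrH=\scrH_\fU\oplus\scrH_{\fV\oplus\fW}$ and $\sigma=\sigma_d\oplus\sigma_a$. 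I expect establishing this uniform decay --- that the finite-dimensional ``defect'' $\fZ$ is genuinely invisible to $\sigma$ --- to be a genuinely delicate technical step.

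For Part (2) I would disintegrate $\sigma_a$ over a measurable field of atoms. The conceptual heart is to show that a single atom attached to a ray $p$ with asymptotic phase $\varphi$ generates a cyclic subrepresentation isomorphic to $\Ind_{F_p}^F\chi_\varphi^p$. Here an atom at $p$ is a vector $\xi$ for which a suitably renormalised limit of $p_n\xi$ exists and is nonzero; concretely, along an eventually periodic $p$ of period $m$ this is an eigenvector of $X_{x_{n+m}}\cdots X_{x_{n+1}}$ with unimodular eigenvalue $e^{i\theta}$ (the Pythagorean identity forces $|\mu|\leq 1$, and an atom saturates this), whose argument fixes $\varphi$. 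The identification rests on two matchings: first, $F$ permutes the translates of the atom according to the coset space $F/F_p$, yielding the induced-representation bookkeeping on $\ell^2(F/F_p)$; second, an element $g\in F_p$ acts on the atom by the scalar dictated by its germ at $p$, which by the construction of $\scrH$ is exactly $\varphi^{\log_2 g'(p)}=\chi_\varphi^p(g)$. The summand $\fV$ supplies the sharp-phase atoms and thus the representations $\Ind_{F_p}^F\chi_\varphi^p$, while $\fW$ supplies the continuous atoms, whose cyclic pieces assemble into the quasi-regular representations $\lambda_{F/\widehat F_p}$ at rational rays; for the latter I would use $F_p/\widehat F_p\cong\Z$ (the germ at a rational ray has slope a power of $2$), so that $\lambda_{F/\widehat F_p}=\int_{S^1}^{\oplus}\Ind_{F_p}^F\chi_\varphi^p\,d\varphi$ by Fourier analysis on $\Z\cong F_p/\widehat F_p$. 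Assembling these over the field of atoms produces the direct integral asserted in \eqref{eq:list}. Matching the analytic phase with the cocycle $g\mapsto\varphi^{\log_2 g'(p)}$ uniformly in a measurable family, while simultaneously treating the discrete ($\fV$) and continuous ($\fW$) regimes, is where I expect the main difficulty to lie.

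Finally, for Part (3) the spectral analysis becomes finite. I would first argue $\fW=\{0\}$: the continuous-atom phenomenon cannot arise from finitely many finite-rank Pythagorean constraints, so every atom is a genuine eigenvector. Such eigenvectors are exactly those of the period operators along eventually periodic rays, which forces $p$ to be rational and pins $\varphi$ to (a root of) the argument of the corresponding eigenvalue, the remaining finite ambiguity being resolved by consistency of the full $F_p$-action. Since $\fV$ is finite-dimensional there are only finitely many such atoms, so the direct integral of Part (2) collapses to the finite direct sum of representations $\Ind_{F_p}^F\chi_\varphi^p$ with $p$ rational, which is exactly the list \eqref{eq:list-FD}.
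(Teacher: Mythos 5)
Your Part (1) is essentially the paper's own argument: the ray-shifting computation for invariance is the one in Proposition \ref{alt subspace presentation prop}, item (1b) is quoted from \cite{Brothier-Wijesena22} exactly as in the paper, and your ``uniform decay of the residual $\fZ$-mass'' is precisely the mechanism of Proposition \ref{pythag invariant decomp prop}: compactness of the unit ball of the finite-dimensional space $\fZ$ applied to the functions $\xi\mapsto\sum_{|w|=n}\norm{P(w\xi)}^2$ produces an $N$ with maximum $M_N<1$ and hence forces the mass to decay; the one ingredient you should make explicit is that $\fZ$ contains no non-trivial $A,B$-invariant subspace (this is where Theorem \ref{complete scrH decomposition theorem} uses the accumulation-point argument). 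Your Part (3) also points in the paper's direction, though the claim $\fW=\{0\}$ is asserted rather than proved (the paper proves it by extracting an accumulation point of $(p_{m_n}\xi)_n$ contained in a ray and contradicting orthogonality to $\fV$).

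The genuine gap is in Part (2), i.e.\ exactly the separable infinite-dimensional case. You attribute the induced representations $\Ind_{F_p}^F\chi_\varphi^p$ to ``sharp-phase atoms'' in $\fV$ (eigenvectors of the period operator with unimodular eigenvalue) and the quasi-regular representations $\lambda_{F/\widehat F_p}$ to ``continuous atoms'' in $\fW$. Both attributions fail. When $\fH$ is infinite-dimensional the period operator $E=\phi(p_d)\restriction_{\fX^p}$ is only an \emph{isometry}, not a unitary, and it can have empty point spectrum: take $\fH=\ell^2(\N)$, $A$ the unilateral shift and $B=0$; then every basis vector is contained in the straight ray $\ell$, so $\fH=\fV=\fV^\ell$ and $\fW=\{0\}$, yet $E=A$ has no eigenvectors at all. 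There are then no sharp-phase atoms, but $\sigma_\fV$ is certainly non-zero --- it decomposes into copies of the quasi-regular representations $\lambda_{F/\widehat F_\ell}$ and $\lambda_{F/\widehat F_{\ell\cdot b}}$. So $\fV$ produces quasi-regulars; symmetrically, $\fW$ also produces direct integrals of induced representations, because the isometry obtained by restricting $\tau_\nu$ to $\Ran(\rho_p)\subset\scrH$ can have a unitary summand. The missing idea, which is how the paper proceeds in Theorems \ref{sigma_p decompose infinite theorem} and \ref{sigma_fW decompose theorem}, is the \emph{Wold decomposition} of this isometry: its unitary summand is handled by the spectral theorem and yields the direct integral $\int_{\bS}^{\oplus}\Ind_{F_p}^F\ti\chi_{z^{1/d}}^p\,d\mu(z)$, while its shift summand yields countably many copies of $\lambda_{F/\widehat F_p}$. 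This unitary-versus-shift dichotomy is independent of the dichotomy $\fV$ versus $\fW$, so your plan, executed as written, covers neither $\sigma_\fV$ nor $\sigma_\fW$ correctly in infinite dimensions. Your Fourier identity $\lambda_{F/\widehat F_p}\cong\int_{\bS}^{\oplus}\Ind_{F_p}^F\chi_\varphi^p\,d\varphi$ (induction in stages plus Proposition \ref{direct integral induced prop}) is correct and explains why both families land inside the list \eqref{eq:list}, but it cannot substitute for identifying which subrepresentation each piece of $\fX^p$ actually generates; note also that your eigenvector mechanism says nothing about rays that are not eventually periodic, whose vectors generate $\lambda_{F/F_p}$ (these belong to the list only because $\chi_\varphi^p$ is trivial for such $p$).
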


\tb{In practice, we are primarily interested in when $\fH$ is finite-dimensional where the theory vastly simplifies and yields elegant results (see \cite{Brothier-Wijesena23} where we provide a complete classification of Pythagorean representations when $\fH$ is finite-dimensional). In this case, $\sigma=\sigma_\fU\oplus\sigma_\fV$ decomposes into a diffuse part and a finite direct sum of monomial representations.} 

Note that the sets of representations provided in items 2 and 3 are optimal: for any representation $\pi$ appearing in \eqref{eq:list} (resp.~\eqref{eq:list-FD}) there exists a P-pair $(A,B)$ acting on a separable (resp.~finite-dimensional) Hilbert space $\fH$ with associated representation $(\sigma,\scrH)$ satisfying that $\pi\subset \sigma$.
Better, we can realise any representation of the list above as a P-representation except few of them (that we completely characterise) which must come in pair (e.g.~$1_F\oplus \lambda_{F/F_{p}} $ with $p=\dots 001$ is isomorphic to the P-representation given by $A=1,B=0$ acting on $\fH:=\C$ but $1_F$ and $\lambda_{F/F_{p}}$ are not isomorphic to a P-representation), see Theorems \ref{sigma_p decompose finite theorem}, \ref{sigma_p decompose infinite theorem} and \ref{sigma_fW decompose theorem}.
Moreover, we can provide a more precise statement for a fixed dimension of $\fH$, see Theorem \ref{sigma_p decompose finite theorem}.

In particular, the class of P-representations is not closed under taking sub-representations.
Although, it is closed under taking (infinite) direct sums and even direct integrals. Indeed, simply take the direct sum or direct integral of the underlying P-pairs. 
The subclass of atomic ones is closed under taking infinite direct sums but is not under taking direct integrals, see Example \ref{ex:int-atomic}.

Using the Mackey-Shoda criterion we observe that all the representations of \eqref{eq:list} except the quasi-regular $\lambda_{F/\widehat F_p}$ are irreducible since $F_p\subset F$ is a self-commensurated subgroup. Moreover, it is not hard to classify them up to unitary conjugacy.
Hence, this provides a complete understanding of all atomic P-representations.
From this theorem we can conclude that the only irreducible finite-dimensional sub-representations of a Pythagorean one are the $\chi_\varphi^p:g\mapsto \varphi^{\log_2g'(p))}$ for $\varphi\in S^1$ and $p$ being $0$ or $1$.
\tb{This corroborates the classical result saying that all finite-dimensional irreducible representations of $F$ are one-dimensional, see \cite{Dudko-Medynets-14} for a proof}.
Furthermore, we have the complete list of all the induced representations $\Ind_H^F\theta$ with $H\subset F, \theta:H\act \fK$ finite-dimensional that are contained in P-representations.
There are very few of them if we consider for instance all quasi-regular representations of subgroups of $F$, see the discussion in Section 3 of \cite{Brothier-Wijesena22}.

\tb{Observe item (1)(d) does not hold when $\fZ$ is infinite-dimensional, see Remark \ref{rem:fH-decomp} for a counter-example. However, in this case we are still able to construct sub-representations $\sigma_d$, $\sigma_a$ which decompose $\sigma$ into diffuse and atomic parts: $\sigma = \sigma_d \oplus \sigma_a$, see Remark \ref{rem:sigma_fW-decompose}. The diffuse part $\sigma_d$ contains $\sigma_\fU$ and is a diffuse Pythagorean representation while the atomic part $\sigma_a$ contains $\sigma_{\fV\oplus\fW}$ and is a Pythagorean representation that is a direct integral of representations appearing in (\ref{eq:list}). Hence, this provides a generalisation of item (1)(d) for when $\fZ$ is infinite-dimensional.}

We previously proved that a diffuse P-representation is Ind-mixing (i.e.~does not contain $\Ind_H^F\theta$ with $\dim(\theta)<\infty$) and similarly provided a criteria for weakly mixing representations (i.e.~does not contain a non-zero finite dimensional sub-representation) \cite{Brothier-Wijesena22}. The above implies the converse yielding the following two characterisations.

\begin{lettercor}\label{cor:B}
Consider a Pythagorean pair $(A,B)$ acting on $\fH$ and the associated P-representation $\sigma:F\act \scrH$.
The following assertions are true.
\begin{enumerate}
\item The representation $\sigma$ is weakly mixing if and only if $\lim_{n}A^n\xi=\lim_n B^n\xi=0$ for all $\xi\in\fH$.
\item The representation $\sigma$ is Ind-mixing if and only if $\lim_n p_n\xi=0$ for all $\xi\in\fH$ and ray $p$.
\end{enumerate}
\end{lettercor}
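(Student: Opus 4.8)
The plan is to read off both equivalences from Theorem \ref{theo:A}, after rewriting each analytic hypothesis as a condition on the canonical subspace $\fU\subset\fH$. Throughout I would use that the ``only if'' directions of the corollary are the statements already established in \cite{Brothier-Wijesena22}, so that the genuinely new input is the converse in each case, supplied by the decomposition of Theorem \ref{theo:A}.

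Begin with item (2). I would first observe that the hypothesis $\lim_n p_n\xi=0$ for all $\xi\in\fH$ and all rays $p$ is, by the very definition of $\fU$, equivalent to $\fH=\fU$, i.e.\ to $\sigma$ being diffuse. The implication ($\Leftarrow$) is then precisely the theorem of \cite{Brothier-Wijesena22} that a diffuse P-representation is Ind-mixing. For ($\Rightarrow$) I would argue contrapositively: if $\fH\neq\fU$, then in the decomposition $\fH=\fU\oplus\ov\fV\oplus\ov\fW\oplus\fZ$ the atomic part $\sigma_a=\sigma_{\fV\oplus\fW}$ is a nonzero sub-representation of $\sigma$, and by item (2) of Theorem \ref{theo:A} it is a direct integral of representations from the list \eqref{eq:list}. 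Each such representation has the form $\Ind_{F_p}^F\chi_\varphi^p$ or $\lambda_{F/\widehat F_p}=\Ind_{\widehat F_p}^F 1$, hence is induced from a one-dimensional representation of a subgroup of $F$; exhibiting one of them as an honest sub-representation of $\sigma_a$ then shows $\sigma$ is not Ind-mixing, as required.

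For item (1), I would use that the constant rays $0=000\cdots$ and $1=111\cdots$ give $p_n=A^n$ and $p_n=B^n$, so that the hypothesis $\lim_n A^n\xi=\lim_n B^n\xi=0$ for all $\xi$ says exactly that no vector carries atomic mass along either endpoint ray. The direction ($\Leftarrow$) is the weakly-mixing criterion of \cite{Brothier-Wijesena22}. For ($\Rightarrow$) I would again contrapose: if, say, $A^n\xi\not\to 0$ for some $\xi$, then there is atomic mass along the endpoint ray $0$, and since $F_0=F$ the associated atomic summand is a one-dimensional representation $\chi_\varphi^0$, a nonzero finite-dimensional sub-representation; thus $\sigma$ is not weakly mixing. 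The symmetric argument with $B$ and the ray $1$ covers the other endpoint. Here I would invoke the consequence of Theorem \ref{theo:A} already recorded above, namely that the only finite-dimensional irreducible sub-representations of a P-representation are the $\chi_\varphi^p$ with $p\in\{0,1\}$; this is what guarantees that absence of endpoint atomic mass rules out every finite-dimensional sub-representation.

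The step I expect to be the main obstacle is the converse of item (2): passing from ``$\sigma_a\neq\{0\}$'' to ``$\sigma$ contains an honest copy of some $\Ind_H^F\theta$ with $\dim\theta<\infty$''. Since $\sigma_a$ is a priori only a direct integral of the representations in \eqref{eq:list}, one must extract an honest direct summand rather than a mere fibre --- most cleanly by locating a rational ray $p$ carrying nonzero atomic mass, for which $F/F_p$ is discrete and $\ell^2(F/F_p)$ contains $\Ind_{F_p}^F\chi_\varphi^p$ as an honest sub-representation --- and one must ensure that the atomic mass responsible for $\fH\neq\fU$ is genuinely captured by $\fV\oplus\fW$ rather than lost in $\fZ$. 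This is exactly where the finer structural results of the body (Theorems \ref{sigma_p decompose finite theorem}, \ref{sigma_p decompose infinite theorem} and \ref{sigma_fW decompose theorem}) are needed.
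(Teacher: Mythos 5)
Your overall architecture does match the paper's: cite \cite{Brothier-Wijesena22} for the implications ``analytic condition $\Rightarrow$ mixing property'' and prove the two converses by contraposition using the atomic structure (note a small terminological slip: those known implications are the ``if'' directions of the corollary, not the ``only if'' directions). However, both of your contrapositive arguments contain genuine gaps, and the paper's proof is routed differently precisely to avoid them. For item (2), your chain is ``$\lim_n p_n\xi\neq 0$ for some $\xi,p$ $\Rightarrow$ $\fH\neq\fU$ $\Rightarrow$ $\sigma_a=\sigma_{\fV\oplus\fW}\neq 0$'', and the second implication is false: Remark \ref{rem:fH-decomp} of the paper exhibits a P-pair with $\fH\neq\fU$ but $\fV\oplus\fW=\{0\}$, all of the non-diffuse mass lying in the infinite-dimensional subspace $\fZ$. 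Item (1)(d) of Theorem \ref{theo:A} only applies when $\dim(\fZ)<\infty$, and the results you defer to (Theorems \ref{sigma_p decompose finite theorem}, \ref{sigma_p decompose infinite theorem}, \ref{sigma_fW decompose theorem}) describe only $\sigma_\fV$ and $\sigma_\fW$, so they cannot recover mass lost in $\fZ$. The paper sidesteps the $\fU,\fV,\fW,\fZ$ decomposition altogether: since $\|\rho_{p,n}\xi\|=\|p_n\xi\|$ and $\rho_{p,n}\to\rho_p$ strongly, the hypothesis $\lim_n\|p_n\xi\|\neq 0$ says exactly that $\rho_p(\xi)\neq 0$ \emph{inside} $\scrH$, so $\Ran(\rho_p)\neq\{0\}$; the sub-representation of $\scrH$ generated by $\Ran(\rho_p)$ is then identified by the proofs of the decomposition theorems (see also Remark \ref{rem:sigma_fW-decompose}) as built from $\Ind_{F_p}^F\chi_\varphi^p$ and $\lambda_{F/\widehat{F_p}}=\Ind_{\widehat{F_p}}^F 1$, each induced from a one-dimensional representation, which kills Ind-mixing without any hypothesis on $\fZ$.

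For item (1), your assertion that atomic mass along the endpoint ray $\ell$ forces ``the associated atomic summand'' to be the one-dimensional representation $\chi_\varphi^\ell$ overlooks the other entry of the list \eqref{eq:list} attached to $\ell$, namely $\lambda_{F/\widehat{F_\ell}}$. Since $\widehat{F_\ell}$ is normal in $F_\ell=F$ with $F/\widehat{F_\ell}\cong\Z$, this quasi-regular representation is the pull-back of the regular representation of $\Z$ and contains \emph{no} non-zero finite-dimensional sub-representation; so the mere presence of atomic mass along $\ell$ does not contradict weak mixing, and your final appeal to ``the only finite-dimensional sub-representations are the $\chi_\varphi^p$ with $p\in\{0,1\}$'' does not bridge this, since that statement is about which finite-dimensional representations \emph{can} occur, not a guarantee that one \emph{does} occur. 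What is needed is an eigenvector: the paper closes exactly this gap by invoking the proof of Theorem \ref{sigma_p decompose finite theorem}, where $E=A\restriction_{\fX^\ell}$ is a unitary on a finite-dimensional space, hence admits an eigenvector $\xi$ with $A\xi=\gamma\xi$, and Claim 8 of that proof shows $\xi$ spans an honest copy of $\chi_\gamma^\ell$ inside $\sigma$. This eigenvector step (equivalently, an atom in the spectral measure of $E$) is the ingredient missing from your sketch, and it is also the place where finite-dimensionality genuinely enters the argument.
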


{\bf Well-definedness of diffuse and atomic P-representations.}
Note, from initial definitions in the current article and the authors' previous article \cite{Brothier-Wijesena22}, it is not immediately clear (nor trivial) that the notion of diffuse and atomic representations are well-defined among their equivalence classes. That is, if $\sigma$ is a diffuse P-representation induced by a diffuse P-pair, it could be possible that there exists an atomic P-pair which induces a (atomic) P-representation equivalent to $\sigma$. However, the above Corollary shows that this cannot occur. Indeed, a representation is diffuse if and only if it is $\NInd$ which is a property preserved by the class of representations. Furthermore, by item $(1)$ in Theorem \ref{theo:A}, we can also conclude that the same holds for atomic representations.

{\bf Plan of the article.}
Section \ref{sec:preliminaries} is a preliminary section on unitary representations, direct integrals, strong operator topology, Thompson's groups, parabolic subgroups of $F$, and Pythagorean representations. 
Moreover, we recall an important class of partial isometries introduced in our previous paper \cite{Brothier-Wijesena22}. 

In Section \ref{decompose pythag rep section} we develop deeply our diagrammatic machinery for decomposing Pythagorean representations. 
This is the longest and most technical part of the paper.
Recall, that a Pythagorean pair $(A,B)$ acting on $\fH$ defines a unitary representation $\sigma:F\act \scrH$ (note that $\fH$ and $\scrH$ are different and $\scrH$ is ``larger'' than $\fH$).
The pair $(A,B)$ can be interpreted as an action of the free monoid $\Mon(a,b)\act \fH$ in two generators $a,b$.
The main idea is to use this (possibly finite-dimensional) monoid action for decomposing the infinite-dimensional representation $\sigma:F\act\scrH$. 

This is done by observing asymptotic behaviours of actions of words.
A ray $p$ in the rooted infinite binary tree is nothing else than an infinite binary sequence giving a sequence of elements $p_n\in \Mon(a,b)$ with $n$ letters.
By considering $\lim_n\|p_n\xi\|$ for rays $p$ and vectors $\xi\in\fH$ we define subspaces of $\fH$ and obtain a direct sum decomposition
$$\fH=\fU\oplus\overline{\fV}\oplus\overline{\fW}\oplus \fZ.$$
Moreover, \tb{when $\fZ$ is finite-dimensional} we may discard the remaining part $\fZ$ without affecting the associated representation $\sigma:F\act\scrH$. 
The space $\fU$ is defined to be the set of vectors $\xi\in\fH$ satisfying that $\lim_n p_n\xi=0$ for all ray $p$.
The spaces $\fV$ and $\fW$ are weak negations of $\fU$ where the sequence of operators $(p_n)_n$ does not converge to zero for certain rays $p$, see Definition \ref{subspace of fH defintiion} for details.
Moreover, we can sub-divide $\fV$ as a direct sum $\oplus_p \fV^p$ where $p$ ranges over all rays (in fact rays up to changing finite prefixes). 
The space $\fV^p$ contains $\{\xi\in\fH:\ \lim_n\|p_n\xi\|=\|\xi\|\}$ but is in general larger.
The space $\fW$ only appears when $\fH$ is infinite-dimensional; it is more complicated to define and to apprehend and cannot be sub-divided over the rays

Vectors of $\scrH$ can be thought as finite binary trees $t$ whose leaves are decorated by vectors of $\fH$.
If $\fK\subset\fH$ is a subspace closed under $A$ and $B$ (hence a $\Mon(a,b)$-subspace), then we obtain a subspace $\scrH_\fK\subset\scrH$ where now we only consider trees decorated by elements in $\fK$ (rather than in all $\fH$). 
This defines a sub-representation of $F$ which is nothing else than the P-representation associated to the restrictions of $A$ and $B$ to $\fK$.
In particular, $\fU,\fV^p$, and $\fW$ define P-representations of $F$ and moreover \tb{when $\fZ$ is finite-dimensional then} $\sigma:F\act\scrH$ is the direct sum of those.
From there we deduce the first item of Theorem \ref{theo:A} where the diffuse part corresponds to $\fU$ and the atomic part corresponds to $\fV\oplus\fW$ (see Theorem \ref{complete scrH decomposition theorem}).
We end this section by studying very precisely a few key examples which is a preparation to the final section. 

In Section \ref{rep from rays section}, we completely decompose and describe the representations of $F$ associated to $\fV^p$ and $\fW$.
This gives items 2 and 3 of Theorem \ref{theo:A}.
When $\fH$ is finite-dimensional, then $\fW$ is trivial and $\fV$ decomposes a finite direct sum of some $\fV^p$.
Moreover, $p$ must be eventually periodic or equivalently is sent to a rational number via the usual surjection from binary sequences to $[0,1]$.
Further, the length of a period of $p$ must be smaller than the dimension of $\fV$.
Now, the representation of the group $F$ associated to $\fV^p$ is a finite direct sum of induced representations of the form $\Ind_{F_p}^F\chi_\varphi^p$ described earlier. 
Similarly, we can perform a description of the representation of $F$ associated to $\fV^p$ and $\fW$ when $\fH$ is separable which uses direct integral of representations rather than direct sums.
\tb{Additionally, we explain how to define the diffuse and atomic parts of $\sigma$ when $\fZ$ is infinite-dimensional.}
We finish the article by deducing Corollary \ref{cor:B}. It is an automatic consequence of the main theorem of our previous article (when diffuse is shown to imply Ind-mixing) and Theorem \ref{theo:A}.

\tb{
\textbf{Comparison to the atomic representations of Dutkay, Haussermann, and Jorgensen.} 
Dutkay, Haussermann, and Jorgensen introduced the family of purely atomic representations of the Cuntz algebra $\cO$ in \cite{DHJ-atomic15} which are in fact precisely the extension of the atomic representations to $\cO$ considered in the current article. Similarly to the current article, in \cite{DHJ-atomic15} the authors classify the irreducible classes of purely atomic representations of $\cO$. However, despite sharing some common features, these two studies are different in nature.
\\
Notably, a key difference is that in \cite{DHJ-atomic15} the classification of purely atomic representations is accomplished by studying directly the larger infinite-dimensional Hilbert space $\scrH$ via certain projections associated to singleton sets (these are analogous to the projections $\rho_p$ for a ray $p$ defined in the present article). In contrast, our study is primarily focused on classifying atomic representations by only studying the smaller Hilbert space $\fH$ (which is often taken to be finite-dimensional in practice). 
\\
Furthermore, we have been able to provide a precise description of the atomic representations as a direct sum (or direct integral) of monomial representations associated to certain subgroups of the Thompson groups. The derivation of this description takes up the majority of the proofs of the main theorems, where else irreducibility and equivalence follows rather easily from classical results. 
\\
Lastly, we note that the equivalence classes of atomic representations of $F$ differs to its extension to $\cO$. For example, consider the following two pairs of Pythagorean pairs $(A_1, B_1)$ and $(A_2, B_2)$:
\[A_1 = \begin{pmatrix} 0 & 1\\0 & 0 \end{pmatrix},\
B_1 = \begin{pmatrix} 0 & 0\\1 & 0 \end{pmatrix} \textrm{ and }
A_2 = \begin{pmatrix} 0 & -1\\0 & 0 \end{pmatrix},\
B_2 = \begin{pmatrix} 0 & 0\\-1 & 0 \end{pmatrix}.
\]
As representations of $F$ we have $\sigma_{A_1,B_1}$ and $\sigma_{A_2,B_2}$ are equivalent; however, their extension to $\cO$ are not equivalent. 
This will be further investigate in our next article \cite{Brothier-Wijesena23}.
}

\tb{Finally, we would like to emphasise that our approach permits to recover a great number of classification results concerning the three Thompson groups and the Cuntz algebra. Notably, we extend results from the following articles \cite{Aita-Bergmann-Conti97, Araujo-Pinto22, barata2019representations, Bergmann-Conti03, Bratteli-Jorgensen-19, DHJ-atomic15, Gar12, Guimaraes-Pinto22, Jones21, Kawamura-05, MSW-07, olesen2016thompson}. This will be extensively explained in our next paper \cite{Brothier-Wijesena23}.}

\section{Preliminaries}\label{sec:preliminaries}
We take this opportunity to fix notations and recall some standard definitions that will be required. We will follow the convention from the previous paper \cite{Brothier-Wijesena22}, hence we shall be brief and refer the reader to that paper for further details.

{\bf Convention.} 
Through out the paper we take the convention that all groups are discrete, all Hilbert spaces are over the complex field $\C$ and linear in the first variable, and all representations are {\it unitary}.

\subsection{Monomial representations and the Mackey-Shoda criterion}
We present a class of representations for which there exists a criteria deciding whether they are irreducible and if two of them are unitary equivalent or not.

{\bf Induced representations.}
If $H$ is a subgroup of a group $G$ and $\sigma$ is a representation of $H$, then $\Ind_H^G \sigma$ denotes the \textit{induced representation} of $\sigma$ associated to $H$. If $\sigma$ is the trivial representation of $H$ then the induced representation is the \textit{quasi-regular} representation associated to $H$ which we denote by $\lambda_{G/H}$.

{\bf Monomial representations.}
A \textit{monomial representation} of $G$ is an induced representation $\Ind_H^G\chi$ associated to a subgroup $H\subset G$ and a one-dimensional representation $\chi:H\to \bS$ (where $\bS$ stands for the unit circle of $\C$ identified with $\cU(\C)$).

{\bf Normaliser and commensurator.}
Let $H\subset G$ be a subgroup.
\begin{itemize}
\item The \textit{normaliser} of $H\subset G$ is the subgroup $N_G(H)\subset G$ of $g\in G$ satisfying $gHg^{-1}=H$.
\item The \textit{commensurator} of $H\subset G$ is the subgroup $\Comm_G(H)\subset G$ of $g\in G$ satisfying that $H\cap g^{-1}Hg$ has finite index in both $H$ and $g^{-1}Hg$.
\end{itemize}
Note that $$H\subset N_G(H)\subset \Comm_G(H)\subset G.$$
The subgroup $H\subset G$ is \textit{normal} (resp.~\textit{self-commensurating}) if $G=N_G(H)$ (resp.~$H=\Comm_G(H)$).

We are now ready to enunciate a useful result known as the Mackey-Shoda criterion which first appeared in \cite{Mackey51}, see \cite{BH} (Theorem 1.F.11, Theorem 1.F.16 and Corollary 1.F.18) for a more recent proof.

\begin{theorem} \label{Mackey-Shoda criteria}
Let $H\subset G$ be a subgroup and $\chi:H\to\bS$ a one-dimensional representation. 
Moreover, consider two subgroups $H_i\subset G, i=1,2$ and two one-dimensional representations $\chi_i:H_i\to\bS, i=1,2.$
The following assertions are true.
\begin{enumerate}
\item The induced representation $\Ind_H^G(\chi)$ is irreducible if and only if for every $g\in\Comm_G(H)\setminus H$, the restrictions of $\chi:H\to \bS$ and $\chi^g:g^{-1}Hg\ni s\mapsto \chi(gsg^{-1})$ to the subgroup $H\cap g^{-1} H g$ do not coincide.
\item The induced representation $\Ind_{H_1}^G\chi_1$ is unitary equivalent to $\Ind_{H_2}^G\chi_2$ if and only if there exists $g\in G$ such that $H_1 \cap g^{-1}H_2g$ has finite index in both groups $H_1$ and $g^{-1}H_2g$; and moreover the restrictions of $\chi_2^g$ and $\chi_1$ to $H_1 \cap g^{-1}H_2g$ coincide.
\end{enumerate}
\end{theorem}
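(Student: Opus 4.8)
The plan is to compute the full space of intertwining operators between two monomial representations and to read off both statements from it; the irreducibility criterion in (1) is then just the special case where the two representations coincide and we inspect the commutant.

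First I would realise $\Ind_{H_i}^G\chi_i$ concretely on the Hilbert space $\mathcal H_i$ of functions $f:G\to\C$ obeying the covariance $f(gh)=\chi_i(h)^{-1}f(g)$ for $h\in H_i$ and $\sum_{gH_i\in G/H_i}|f(g)|^2<\infty$, with $G$ acting by left translation (the sum makes sense since $|f|$ is constant on cosets). A bounded $G$-equivariant operator $T:\mathcal H_1\to\mathcal H_2$ is then encoded by a kernel $k:G\to\C$ that is covariant under left translation by $H_2$ via $\chi_2$ and under right translation by $H_1$ via $\chi_1$. Equivariance forces $k$ to be supported on, and to factor through, the double coset space $H_2\backslash G/H_1$, so that $T$ decomposes as a (formal) sum of elementary intertwiners $T_D$, one attached to each double coset $D=H_2gH_1$.

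Next I would analyse a single double coset. Writing $L_g:=H_1\cap g^{-1}H_2g$, the bi-covariance forces $k$ to vanish on $D$ unless the restrictions of $\chi_1$ and $\chi_2^g$ to $L_g$ coincide; when they do, $k|_D$ is unique up to a scalar. The remaining, and most delicate, point is boundedness: the elementary intertwiner $T_D$ extends to a bounded operator on $\ell^2$ precisely when $D$ is a finite union of left $H_1$-cosets and of right $H_2$-cosets, i.e. when $[H_1:L_g]<\infty$ and $[H_2:gL_gg^{-1}]<\infty$, which is exactly the commensuration condition $[H_1:L_g]<\infty$ and $[g^{-1}H_2g:L_g]<\infty$. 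Here one estimates the operator norm of the averaging operator attached to $D$ and shows it diverges when the index is infinite, while a finite index yields a bounded operator (after normalisation, a scalar multiple of a partial isometry). I expect this boundedness and $\ell^2$-convergence analysis to be the main obstacle, since it is precisely where the commensurator, rather than the normaliser or the naive condition ``$\chi=\chi^g$ for all $g\notin H$'', enters, and where care is needed to control the contributions of infinitely many double cosets.

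With this local analysis in hand, part (1) follows by taking $H_1=H_2=H$ and $\chi_1=\chi_2=\chi$: the commutant of $\Ind_H^G\chi$ is spanned by the $T_D$ ranging over double cosets with $g\in\Comm_G(H)$ and $\chi=\chi^g$ on $H\cap g^{-1}Hg$. The identity double coset $D=H$ always contributes the scalars, and the representation is irreducible if and only if no further double coset contributes, which is exactly the stated condition that $\chi$ and $\chi^g$ differ on $H\cap g^{-1}Hg$ for every $g\in\Comm_G(H)\setminus H$. For part (2), a nonzero bounded intertwiner exists if and only if some double coset $H_2gH_1$ satisfies both the finite-index condition and the matching of $\chi_2^g$ with $\chi_1$ on $H_1\cap g^{-1}H_2g$; to upgrade ``nonzero intertwiner'' to ``unitary equivalence'' I would note that, after normalising the scalar, the single-coset operator $T_D$ is an isometry onto the corresponding single-coset summand on the other side, hence unitary, giving $\Ind_{H_1}^G\chi_1\cong\Ind_{H_2}^G\chi_2$. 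Conversely, any unitary equivalence is in particular a nonzero intertwiner and therefore forces the existence of such a double coset.
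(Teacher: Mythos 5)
The first thing to note is that the paper does not actually prove this theorem: it quotes it from the literature, citing \cite{Mackey51} and \cite{BH} (Theorems 1.F.11, 1.F.16 and Corollary 1.F.18). Your double-coset analysis of the intertwiner space is precisely the route taken in those references, and your treatment of part (1) is essentially sound. The one point you gloss over is how \emph{both} finite-index conditions arise from a single non-scalar element $T$ of the commutant: the membership of $T\delta_H$ in $\ell^2(G/H)$ (where $\delta_H$ is the canonical vector supported on $H$) only gives $[H:H\cap gHg^{-1}]<\infty$ for $g$ in the support of the kernel; the second condition defining $\Comm_G(H)$ comes from running the same argument on $T^*$, which is again an intertwiner. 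Since one-sided finite index does not imply commensuration, this step cannot be skipped.

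The genuine gap is in part (2), in the sentence claiming that, after normalising, the single-coset intertwiner $T_D$ ``is an isometry onto the corresponding single-coset summand on the other side, hence unitary.'' This is false, and in fact the ``if'' direction of part (2) as stated is false without an irreducibility hypothesis. Take $G=\Z$, $H_1=2\Z$, $H_2=3\Z$ and $\chi_1,\chi_2$ trivial: with $g=0$ the subgroup $H_1\cap g^{-1}H_2g=6\Z$ has finite index in both $2\Z$ and $3\Z$ and the characters agree on it, yet $\Ind_{2\Z}^{\Z}\chi_1$ and $\Ind_{3\Z}^{\Z}\chi_2$ have dimensions $2$ and $3$, so they are not unitarily equivalent. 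Here there is a single double coset, and the corresponding $T_D$ is the $3\times 2$ all-ones matrix: $T_D^*T_D$ is not a scalar, and $T_D$ is neither injective nor surjective. What your construction actually proves is that the existence of such a $g$ is equivalent to the two induced representations being \emph{non-disjoint}: the polar decomposition of $T_D$ produces equivalent nonzero subrepresentations. This is Theorem 1.F.16 of \cite{BH}. To upgrade non-disjointness to unitary equivalence one must assume both induced representations are irreducible, so that Schur's lemma turns any nonzero intertwiner into a multiple of a unitary; that is Corollary 1.F.18 of \cite{BH}. (Strictly speaking the paper's wording of part (2) inherits the same defect, since it omits this hypothesis; its applications are unaffected, because it only invokes part (2) for self-commensurated parabolic subgroups, whose induced representations are irreducible by part (1).)
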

In particular, if $H\subset G$ is a self-commensurating subgroup, then $\Ind_H^G\chi$ is irreducible for all one-dimensional representation $\chi:H\to\bS.$
Moreover, if $H_1,H_2$ are self-commensurating subgroups, then the induced representations are unitary equivalent if and only if there exists $g\in G$ satisfying $g^{-1} H_2 g = H_1$ (i.e.~the groups are conjugated) and $\chi_2^g=\chi_1.$
Subsequently, non-conjugated pairs of self-commensurated subgroups of $G$ together with \textit{any} one-dimensional representations produce non-equivalent irreducible induced representations.

\subsection{Direct integrals of representations}

Direct integrals provide a general framework for decomposing unitary representations into irreducible components. We shall provide a brief introduction to the theory of direct integrals required for this paper. For a more detailed introduction we refer the reader to Chapter 14 in \cite{kadison1986fundamentals}.\\
Let $(X, \mathcal{B}, \mu)$ be a measure space where $\mu$ is a $\sigma$-finite measure on $(X, \mathcal{B})$ and let $\{\scrH_x\}_{x \in X}$ be a family of separable Hilbert spaces. 
Then write
\[\scrH := \int_X^\oplus \scrH_x d\mu(x)\]
to be the space of functions $\xi$ on $X$ mapping $x \mapsto \xi(x) \in \scrH_x$ where $\scrH$ satisfies:
\begin{enumerate}
	\item $x \mapsto \langle \xi(x), \eta(x) \rangle$ is $\mu$-integrable for all $\xi,\eta \in \scrH$;
	\item if $\xi_x \in \scrH_x$ for all $x \in X$ and $\xi \mapsto \langle \xi_x, \eta(x) \rangle$ is $\mu$-integrable for every $\eta \in\scrH$ then there is a $\xi \in \scrH$ such that $\xi(x) = \xi_x$ for almost all $x \in X$. 
\end{enumerate} 
By property $(1)$, an inner-product on $\scrH$ can be defined:
\[\langle \xi, \eta \rangle = \int_X \langle \xi(x), \eta(x) \rangle d\mu(x) \textrm{ for all $\xi,\eta \in \scrH$}.\]
The above inner-product makes $\scrH$ a separable Hilbert space which is called the \textit{direct integral} of $\{\scrH_x\}_{x\in X}$ over $(X, \mu)$. 
Let $G$ be a discrete group. For each $x \in X$, let $\sigma_x$ be a representation of $G$ on $\scrH_x$ and suppose for each $g \in G$ the function $x \mapsto \langle \sigma_x(g) \xi(x), \eta(x) \rangle$ is $\mu$-integrable for all $ \xi, \eta \in \scrH$. Then $\{\sigma_x\}_{x\in X}$ forms a \textit{measurable field of representations} of $G$ and
\[\sigma(g) := \int_X^\oplus \sigma_x(g) d\mu(x)\]
defines a unitary operator on $\scrH$ such that $(\sigma(g)\xi)(x) = \sigma_x(g)(\xi(x))$ for almost every $x \in X$. Further, the mapping $g \mapsto \sigma(g)$ defines a representation $\sigma$ of $G$ on $\scrH$ which we write by
\[\sigma := \int_X^\oplus \sigma_x d\mu(x)\]
and is called the \textit{direct integral} of $\{\sigma_x\}_{x \in X}$ over $(X, \mu)$.

\begin{example}
	Suppose $X$ is a countable set endowed with the counting measure $\mu$. Then the direct integral of $\{\scrH_x\}_{x\in X}$ over $(X, \mu)$ simply gives the usual direct sum $\oplus_{x \in X} \scrH_x$. Similarly, direct integral of representations correspond to direct sum when $X$ is discrete.	
	Going back to general $(X,\mu)$, if $\scrH_x := \scrK$ is fixed for all $x \in X$, then the direct integral $\scrH$ corresponds to $L^2(X, \mu, \scrK)$: the space of (measurable) square integrable functions valued in $\scrK$ that can also be interpreted as the Hilbert space tensor product $L^2(X,\mu) \otimes \scrK.$
\end{example}

Direct integrals and inductions are compatible notions as illustrated by the following proposition.

\begin{proposition}[\cite{Mackey52} Theorem $10.1$] \label{direct integral induced prop}
	Let $H$ be a subgroup of a group $G$ and $\{\sigma_x\}_{x \in X}$ be a measurable field of representations of $H$. Then
	\[\Ind_H^G \left(\int_X^\oplus \sigma_x d\mu(x)\right) \cong \int_X^\oplus (\Ind_H^G \sigma_x) d \mu(x).\]
\end{proposition}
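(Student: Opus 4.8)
The plan is to exploit the fact that $G$ is discrete, so that the induced representation admits a completely explicit transversal model, and then to identify both sides of the claimed isomorphism with one and the same direct integral. Fix a transversal $T \subset G$ for the right cosets, so that $G = \bigsqcup_{t \in T} tH$; for separability I assume $[G:H] = |T|$ is countable, which is the only case relevant to our applications (e.g.\ $G = F$). Each $g \in G$ then factors uniquely as $g = \beta(g)\,c(g)$ with $\beta(g) \in T$ and $c(g) \in H$. Given any representation $\tau$ of $H$ on a Hilbert space $\scrK$, the induced representation $\Ind_H^G \tau$ is realized on $\bigoplus_{t \in T} \scrK$ by a formula of the shape
$$\big((\Ind_H^G\tau)(g)\,\xi\big)(t) = \tau\big(c(g^{-1}t)^{-1}\big)\,\xi\big(\beta(g^{-1}t)\big),$$
that is, as a permutation of the index set $T$ twisted by the operators $\tau(h)$ for suitable $h \in H$. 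The crucial observation is that the combinatorial data $\beta(g^{-1}t) \in T$ and $c(g^{-1}t) \in H$ depend only on $g$, $t$ and the chosen transversal $T$ --- never on the representation $\tau$.

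With $\scrH := \int_X^\oplus \scrH_x\, d\mu(x)$ and $\sigma := \int_X^\oplus \sigma_x\, d\mu(x)$, I would first rewrite the Hilbert space of the left-hand side. Applying the transversal model to $\tau = \sigma$ gives $\Ind_H^G\sigma$ acting on $\bigoplus_{t \in T}\scrH$. Since $T$ is countable, direct sums commute with direct integrals, yielding a canonical unitary
$$\bigoplus_{t \in T}\scrH = \bigoplus_{t \in T}\int_X^\oplus \scrH_x\, d\mu(x) \;\cong\; \int_X^\oplus \Big(\bigoplus_{t \in T}\scrH_x\Big)\, d\mu(x).$$
On the other hand, applying the transversal model to each $\tau = \sigma_x$ realizes $\Ind_H^G\sigma_x$ on $\bigoplus_{t\in T}\scrH_x$, so the right-hand side $\int_X^\oplus \Ind_H^G\sigma_x\, d\mu(x)$ acts on the very same space $\int_X^\oplus(\bigoplus_{t\in T}\scrH_x)\,d\mu(x)$.

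It then remains to check that, under this identification, the two actions of each $g \in G$ agree. On the left, $\Ind_H^G\sigma(g)$ permutes the $T$-index by $t \mapsto \beta(g^{-1}t)$ and applies the fibre operator $\sigma(c(g^{-1}t)^{-1}) = \int_X^\oplus \sigma_x(c(g^{-1}t)^{-1})\,d\mu(x)$; on the right, $\int_X^\oplus \Ind_H^G\sigma_x(g)\,d\mu(x)$ performs, for each $x$, the \emph{same} permutation $t \mapsto \beta(g^{-1}t)$ followed by $\sigma_x(c(g^{-1}t)^{-1})$, and then integrates. Because $\beta(g^{-1}t)$ and $c(g^{-1}t)$ are independent of $x$, these two prescriptions coincide fibrewise for almost every $x$, so the identity map on $\int_X^\oplus(\bigoplus_{t\in T}\scrH_x)\,d\mu(x)$ intertwines the two representations, giving the desired isomorphism.

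The main obstacle is not the algebra but the \emph{measurability bookkeeping} needed to make the above genuinely an isomorphism of direct integrals. Concretely, I would verify that $x \mapsto \Ind_H^G\sigma_x$ is a measurable field of representations by computing its matrix coefficients: for decomposable vectors supported at indices $t, t' \in T$, the coefficient $x \mapsto \langle \Ind_H^G\sigma_x(g)\,\xi(x), \eta(x)\rangle$ reduces to a selection of coefficients of the form $x \mapsto \langle \sigma_x(c(g^{-1}t)^{-1})\,\xi(x), \eta(x)\rangle$, which are $\mu$-measurable by the hypothesis that $\{\sigma_x\}$ is a measurable field and because the indices $\beta, c$ are $x$-independent. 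One must also confirm that the decomposable vectors of $\bigoplus_{t\in T}\scrH$ match those of $\int_X^\oplus(\bigoplus_{t}\scrH_x)\,d\mu(x)$, i.e.\ that the canonical unitary above respects the measurable structure; this is routine since $T$ is countable and the interchange of direct sum and direct integral is implemented coordinatewise.
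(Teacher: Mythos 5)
Your argument is correct, but there is no in-paper proof to compare it against: the paper states Proposition \ref{direct integral induced prop} as a citation to Theorem 10.1 of Mackey's 1952 paper, where the result is proved for separable locally compact groups using his measure-theoretic realization of induced representations on coset spaces equipped with quasi-invariant measures. Your route is genuinely different and more elementary, exploiting the paper's standing convention that all groups are discrete: in the transversal model, $\Ind_H^G\tau$ is a $T$-indexed direct sum twisted by the operators $\tau\bigl(c(g^{-1}t)^{-1}\bigr)$, and since the combinatorial data $\beta(g^{-1}t)$, $c(g^{-1}t)$ do not depend on $\tau$, induction visibly commutes with any construction performed fibrewise in $\tau$ --- in particular with direct integrals --- once the measurability bookkeeping is done, which you carry out correctly (the coefficient of $\Ind_H^G\sigma_x(g)$ against sections supported at fixed coset indices is a countable sum of coefficients of the field $\{\sigma_x\}_x$, hence measurable, and dominated by $\|\xi(x)\|\,\|\eta(x)\|$ so integrable). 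The countability restriction on $[G:H]$ that you flag is not a real loss: the paper's direct-integral framework is set up only for fields of \emph{separable} Hilbert spaces, so $\Ind_H^G\sigma_x$ can only appear as a direct integrand when the index is countable, and in every application $G=F$ is itself countable. What each approach buys: the citation to Mackey inherits the full locally compact generality for free, while your proof is self-contained, avoids quasi-invariant measure theory entirely, and is exactly adequate for the uses made of the proposition (inducing direct integrals over $\bS$ from $F_p$ up to $F$ in Theorems \ref{sigma_p decompose infinite theorem} and \ref{sigma_fW decompose theorem}). One harmless slip: the cosets $tH$ you enumerate are left cosets (elements of $G/H$), not right cosets; the displayed decomposition $G=\bigsqcup_{t\in T}tH$ makes your intent unambiguous and the rest of the argument is consistent with it.
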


\subsection{Projections and strong operator topology} \label{lattice projection subsection}
\textbf{Projections.}
A \textit{(orthogonal) projection} $P$ acting on a Hilbert space $\fH$ is a bounded linear operator satisfying $P=P^*=P\circ P$. Taking the range $P\mapsto \Ran(P)$ realises a bijection between projections and closed vector subspaces of $\fH$. We order projections as usual: $P_1\leq P_2$ when $\Ran(P_1)\subset \Ran(P_2)$.
Families of projections $\{P_i:\ i\in I\}$ admit greatest lower bound $\bigwedge_i P_i$ and least upper bound $\bigvee_i P_i$ just like closed vector subspaces do.

\textbf{Strong operator topology.}
The \textit{strong operator topology} (SOT) of $B(\fH)$ is the locally convex topology obtained from the seminorms $B(\fH)\ni T\mapsto \|T(\xi)\|$ indexed by the vectors $\xi\in\fH$.
In particular, a net of operators $(T_i)_{i\in I}$ converges to $T$ for the SOT, denoted $T_i\xrightarrow{s} T$, when $\lim_{i\in I}\|T(\xi)-T_i(\xi)\|=0$ for all $\xi\in\fH$.
Below is an important result regarding the convergence of increasing (and decreasing) nets of projections in the SOT, see \cite[Propositions 2.5.6, 2.5.8 and Corollary 2.5.7]{kadison1983fundamentals} for proofs.

\begin{proposition} \label{MCT projection prop}
	If $I$ is a directed set and $(P_i : i \in I)$ an increasing (resp.~decreasing) net of projections on $\fH$, then $P_i \xrightarrow{s} \bigvee_{i \in I} P_i$ (resp. $P_i \xrightarrow{s} \bigwedge_{i\in I} P_i$). \\
	In particular, if $\{Q_i\}_{i \in I}$ is a family of mutually disjoint projections (i.e.~$Q_iQ_j=0$ when $i\neq j$), then $\sum_{j\in J}Q_j \xrightarrow{s} \bigvee_{i\in I} Q_i$ where $J\subset I$ is a finite subset that we let tend to $I$.
\end{proposition}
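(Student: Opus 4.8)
The plan is to prove the increasing case first and then derive the decreasing case and the statement on mutually disjoint projections by formal manipulations. For the decreasing case, if $(P_i)$ decreases then $(\id - P_i)$ increases, and by the De Morgan law for the orthocomplemented projection lattice one has $\bigvee_i(\id - P_i) = \id - \bigwedge_i P_i$; applying the increasing case to $(\id - P_i)$ and subtracting from $\id$ yields $P_i \xrightarrow{s}\bigwedge_i P_i$. For the final assertion I would observe that the finite subsets $J \subset I$ form a directed set under inclusion and that $(\sum_{j\in J} Q_j)_J$ is an increasing net of projections (each partial sum is itself a projection since the $Q_j$ are mutually orthogonal). Its ranges exhaust $\bigcup_i \Ran(Q_i)$, so its supremum is exactly $\bigvee_i Q_i$, and the increasing case applies verbatim.

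It remains to treat an increasing net $(P_i)$ with supremum $P := \bigvee_i P_i$, which is the projection onto $\cspan\big(\bigcup_i \Ran(P_i)\big)$. Fixing $\xi \in \fH$ and writing $\eta := P\xi$, the relation $P_i \leq P$ gives $P_i P = P_i$, whence $P_i\xi = P_i\eta$ and $P\xi = \eta$; so it suffices to show $P_i\eta \to \eta$ for $\eta \in \Ran(P)$. Here I would use that $\big(\|P_i\eta\|^2\big)_i = \big(\langle P_i\eta,\eta\rangle\big)_i$ is a net which is increasing (since $P_i \le P_j$ for $i \le j$) and bounded above by $\|\eta\|^2$, hence convergent. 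The key computation is that for $i \le j$ the idempotency of $P_j$ together with $P_j P_i = P_i$ gives $\|P_j\eta - P_i\eta\|^2 = \|P_j\eta\|^2 - \|P_i\eta\|^2$, so the convergence of $(\|P_i\eta\|^2)_i$ forces $(P_i\eta)_i$ to be a Cauchy net; being Cauchy in the complete space $\fH$, it converges to some $\zeta$.

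The main point, and the step I expect to require the most care, is identifying the limit $\zeta$ with $\eta$. For this I would test against vectors lying in the ranges: for any $k$ and any $v \in \Ran(P_k)$ one has $P_i v = v$ whenever $i \ge k$, so $\langle P_i\eta, v\rangle = \langle \eta, P_i v\rangle = \langle \eta, v\rangle$ for $i \ge k$; passing to the limit yields $\langle \zeta, v\rangle = \langle \eta, v\rangle$. Thus $\zeta - \eta$ is orthogonal to every $v \in \bigcup_k \Ran(P_k)$, hence to its closed span $\Ran(P)$. Since both $\zeta$ and $\eta$ lie in $\Ran(P)$ (the former as a limit of vectors $P_i\eta \in \Ran(P_i) \subset \Ran(P)$, the latter closed), the difference $\zeta - \eta \in \Ran(P)$ is orthogonal to itself, forcing $\zeta = \eta$. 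This gives $P_i\eta \to \eta$ and completes the argument. The only genuine subtlety is that the characterisation of $P$ as the projection onto $\cspan\big(\bigcup_i \Ran(P_i)\big)$ is precisely what makes the orthogonality argument close; the remainder is standard Cauchy-net bookkeeping.
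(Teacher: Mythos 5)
Your proof is correct, but there is nothing in the paper to match it against: the paper does not prove this proposition at all, it simply quotes it from Kadison--Ringrose, see \cite[Propositions 2.5.6, 2.5.8 and Corollary 2.5.7]{kadison1983fundamentals}. So your argument should be judged as a self-contained substitute, and as such it holds up. The reduction to vectors $\eta\in\Ran(P)$ via $P_iP=P_i$, the identity $\norm{P_j\eta-P_i\eta}^2=\norm{P_j\eta}^2-\norm{P_i\eta}^2$ for $i\le j$ (which uses $P_jP_i=P_iP_j=P_i$ and self-adjointness), the convergence of the bounded increasing net of reals $\left(\langle P_i\eta,\eta\rangle\right)_i$, the passage from this to the Cauchy property of the net $(P_i\eta)_i$ (using directedness of $I$), and the identification of the limit by testing against vectors of $\bigcup_k\Ran(P_k)$ are all sound; likewise the decreasing case via the orthocomplementation $P\mapsto \id-P$ (which exchanges joins and meets) and the disjoint-sum case via the increasing net of finite partial sums are the standard formal reductions, and you correctly note that each finite partial sum is itself a projection. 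For comparison, the textbook argument the paper cites runs slightly differently: decompose $\fH=\Ran(P)\oplus\Ran(P)^\perp$, note that every $P_i$ vanishes on $\Ran(P)^\perp$, that by directedness the net is eventually the identity on each vector of the algebraic span of $\bigcup_i\Ran(P_i)$, and conclude by density together with the uniform bound $\norm{P_i}\le 1$. That route avoids Cauchy nets entirely; yours avoids the density-plus-equicontinuity step. Both are elementary and complete, so the choice between them is a matter of taste.
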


\subsection{Richard Thompson's group $F$} \label{subsec:F-def}

Richard Thompson's group $F$ is the group of piece-wise linear increasing homeomorphisms of $[0,1]$ having finitely many non-differentiable points each contained in the dyadic rationals $\Z[1/2]$.
It also acts by homeomorphisms on the Cantor space $\mathcal C:=\{0,1\}^{\N^*}$ by locally changing finite prefixes and keeping the lexicographic order. 
Note that $$S:\mathcal C\to [0,1], x\mapsto \sum_{n\geq 1} \frac{x_n}{2^n}$$ is a surjective continuous $F$-equivariant map so that each dyadic rational of $(0,1)$ have two preimages and the other one preimage.
We consider the usual notion of derivative or slope for an element $g\in F$ at a point $x$ of $\mathcal C$ using cylinder sets or standard dyadic intervals (by a standard dyadic interval, or sdi in short, we mean the set $I_w := \{w \cdot x : x \in \cC\}$ where $w$ is a finite word). 
That is, if there exists two finite words $u,v$ in $0,1$ made of $n$ and $m$ letters respectively so that for any infinite word $x\in\mathcal C$ we have $g(u\cdot x)=v\cdot x$, then 
$$g'(u\cdot x) = 2^{n-m}.$$
This coincides with the derivative of $g$ acting on $[0,1]$ on a point where $g$ is differentiable.
As usual the support of $g\in F$, denoted $\supp(g)$, denotes the closure of the points of $\mathcal C$ that are not fixed by $g$:
$$\supp(g):=\overline{ \{x\in\mathcal C:\ g(x)\neq x \} }.$$

We will be mainly using the description of $F$ using trees that we now precisely define. 
This description occured in the work of Brown \cite{Brown87}.

{\bf Infinite binary rooted tree.}
Consider the infinite binary rooted tree $t_\infty$ that we geometrically identify as a graph in the plane where the root is on top, the root has two adjacent vertices to its bottom left and right, and every other vertex has three adjacent vertices: one above, one to the bottom left, and one to the bottom right. The bottom two vertices are called \textit{immediate children} of the vertex.
Moreover, a pair of edges that have a common vertex is called a \textit{caret} which we denote by the symbol $\Y$.

{\bf Decoration of vertices with binary sequences.}
We will identify vertices of $t_\infty$ with finite binary sequences (also referred to as finite words). 
Indeed, write $\Ver$ for the set of vertices of $t_\infty$
Then associate the root with the empty sequence which we denote by $\varnothing$.
Moreover, if $\nu\in \Ver$ which is associated to the finite sequence $s$ and $\nu_0,\nu_1$ are the left and right immediate children of $\nu$, then associate $\nu_i$ with $s \cdot i$ for $i = 0,1$.

{\bf Decoration of vertices with sdi.}
By the above identification we can decorate each vertex with the sdi $I_\nu$. This gives a bijection between $\Ver$ and the set of sdi's.
We say two vertices are \textit{disjoint} if their associated sdi's (as subsets of $\cC$) are disjoint.

{\bf Tree.}
A \textit{tree} is any non-empty rooted finite subtree of $t_\infty$ so that each vertex has either two immediate children or none. 
A vertex with no immediate children is called a \textit{leaf} and we denote the set of leaves of a tree $t$ by $\Leaf(t)$.
The leaves of a tree are indexed increasingly from left to right starting at 1. 
Moreover, we write $\fT$ for the collection of all trees and set $\tar(t) := \vert \Leaf(t) \vert$ where $\tar$ stands for ``target''. 
By associating each leaf with its sdi we obtain a bijection between $\fT$ and the set of all standard dyadic partitions (a standard dyadic partition, in short sdp, is a finite partition of $\cC$ consisting of sdi's).

Denote the trivial tree, the tree having only one leaf, by $e$ or $I$, and write $t_n$ for the regular tree with $2^n$ leaves all at distance $n$ from the root for the usual tree-metric.

{\bf Forest.}
We call a \textit{forest} a finite union of trees that we represent as finitely many trees placed next to each other ordered from left to right.
Write $\fF$ for the collection of all forests and $\Root(f),\Leaf(f)$ for the sets of roots and leaves of a forest $f$, respectively.

The \textit{tensor product} $f\ot g$ of two forests $f,g$ is defined to be the forest obtained by concatenating horizontally $f$ to the left of $g$ which forms an associative binary operation on $\fF$.
An \textit{elementary forest} $f_{k,n}$ where $1\leq k\leq n$ is the unique forest that has $n$ roots, $n+1$ leaves, and all the trees in the forest are trivial except the $k$th tree which is a caret.
Thus $f_{k,n}= I^{\ot k-1} \ot \Y \ot I^{\ot n-k}.$
We may often write $f_{k}$ rather than $f_{k,n}$ if the value of $n$ is clear from context.
Observe that the set of elementary forests generate $\fF$ as every forest can be expressed as a finite composition of elementary forests. 

{\bf Composition of forests.}
When $f,g$ are forests so that the number of leaves of $f$ is equal to the number of roots of $g$ we can geometrically define the \textit{composition} of $f$ with $g$, written $g\circ f$, as the forest obtained by vertically stacking  $f$ on top of $g$ and by lining up the $j$th leaf of $f$ with the $j$th root of $g$.
This provides a partially defined associative binary operation on $\fF$.

We can use this operation to equip $\fT$ with a directed poset structure $\preceq$ by declaring that $t\preceq s$ if and only if there exists $f \in \fF$ such that $s = f\circ t$. 

\textbf{Tree-diagrams.}
A \textit{tree-diagram} is an ordered pair of trees $(t,s) \in \fT \times \fT$ with the same number of leaves.
From the bijection between trees and set of sdp's we deduce that any element $g\in F$ can be described as a tree-diagram.
However, this is not a one to one correspondence because it is clear $(f\circ t,f\circ s)$ and $(t,s)$ correspond to the same Thompson's group element.
Two leaves $\nu \in \Leaf(t), \omega \in \Leaf(s)$ are said to \textit{correspond} to each other if they have the same numbered position. Subsequently, the element in $F$ associated with $(t, s)$ maps the sdi $I_\omega$ to $I_\nu$.

{\bf Description of $F$ as tree-diagrams \cite{Brown87}:}
Denote $\cQ$ to be the set of all tree-diagrams $(t,s)$. 
Define $\sim$ to be the equivalence relation generated by $(f\circ t,f\circ s)\sim (t,s)$ where $f$ is any forest such that composition with $t,s$ is well-defined.
We then consider the quotient space $\cQ/\sim$ and let $[t,s]$ be the class of $(t,s)$.
Define the binary operation: 
$$[t,s]\circ [s, r]:=[t, r].$$
This binary operation extends to all of $\cQ$ since $(\fT, \preceq)$ is a directed poset. Further, it is easy to verify that the operation $\circ$ is well-defined and yields a group $(\cQ/\sim, \circ)$ such that $[t,s]^{-1}=[s,t]$ and $[t,t]$ is the identity for each tree $t$.
Moreover, this group is precisely Thompson's group $F$.

{\bf We will mostly working with this description of $F$ as tree-diagrams and thus will view elements of Thompson's group as an ordered pair of trees. }

{\bf Left/right sides, and centre of $t_\infty$.}
We partition $\Ver$ into four sets on which $F$ acts transitively on. 
Indeed, denote the \textit{left (resp. right) side} of $t_\infty$ to be the set of vertices whose binary sequence consists only of zeroes (resp. ones). 
The root node neither lies on the left or right side of $t_\infty$. 
The \textit{centre} of $t_\infty$ is the set of vertices whose binary sequence consists of a zero and an one. We define the following convenient notation that will be referred to several times in the sequel.

\begin{notation} \label{match vertices}
	Define $D$ to be the set of pairs of vertices $(\nu, \omega) \in \Ver \times \Ver$ such that both $\nu$ and $\omega$ are either on the left side, right side or in the centre of $t_\infty$.
	
	For all $(\nu, \omega) \in D$, there exist $[t, s] \in F$ such that $\nu$ and $\omega$ are corresponding leaves in the trees $t$ and $s$, respectively.
\end{notation}

\subsection{Identification of Rays in $t_\infty$}
We call a \textit{ray} a path in $t_\infty$ of infinite length which begins from the root node and has no repeating edges, i.e.~an infinite geodesic path starting at the root node. Define $\Ver_p=\{\mu_n^p:\ n\geq 0\} \subset \Ver$ to be the set of vertices a ray $p$ passes through so that $\mu_n^p$ is at distance $n$ from the root node.

\subsubsection{Rays as sequences of binary digits}
We associate each ray $p$ with a sequence of binary digits $(x_k : k \geq 1)$ given by
\begin{equation*}
	x_k =
	\begin{cases*}
		0, \quad& if the $k$th edge of $p$ is a left-edge\\
		1, \quad& if the $k$th edge of $p$ is a right-edge.
	\end{cases*}
\end{equation*}
The first $n$ digits of $p$ forms the vertex $\mu_n^p$ for $n \geq 0$. 
This identification gives an obvious bijection between the space of rays in $t_\infty$ and the Cantor space. For the remainder of the paper we shall freely identify elements in the Cantor space with rays in $t_\infty$.

\subsubsection{Rays as sequences of $a,b$}
We provide in conjunction another convenient identification of rays.
Write $\Mon(a,b)$ for the free monoid in two generators $a$ and $b$: all the finite words in these two letters with empty word $e$ equal to the unit. 

\begin{definition} \label{subwords of ray definition}
	A ray $p$ is an infinite binary sequence in $a,b$ that we read from \textit{right to left} so that the $j$th letter from the right is $a$ if the $j$th edge of $p$ is a left-edge and $b$ otherwise for each $j\geq 1$. \\
	For $n \in \N$ define the following.
	\begin{enumerate}[i]
		\item Let $p_n$ be the finite subword in $\Mon(a,b)$ of $p$ made of the first $n$ letters from the right of $p$ where take $p_0$ to be the empty word $e$. Hence, $p_{n+1}=x\cdot p_n$ with $x$ the $(n+1)$th letter of $p$ and $p=q\cdot p_n$ for some ray $q$. Identify $p_n$ with the subray of $p$ which is the path from $\varnothing$ to the vertex in $p$ of length $n$.
		\item Let $\ti p_{n+1}$ be equal to $x \cdot p_n$ where $x = b$ if the $(n+1)$th letter of $p$ is $a$ otherwise $x = a$. 
		\item Let ${}_kp$ be the ray formed by removing the first $k$ letters from the right of $p$ where take $_0p = p$. Thus ${}_kp$ is isomorphic to the subray contained in $p$ beginning on the vertex in $p$ of length $n$ and satisfies the identity $p = {}_kp\cdot p_k$.
	\end{enumerate}
\end{definition}

\textbf{Period and equivalence relation on rays.}
Here are some useful definitions concerning rays.

\begin{definition} \label{straight ray defintiion}
	\begin{enumerate}[i]
		\item If $p$ is an eventually periodic ray then there exist words $c, w$ such that $c$ is prime and $p = c^\infty \cdot w$ (a word is prime if it cannot be expressed in the form $d^n$ for any word $d$ and $n > 1$). We say $c$ is a period of $p$.
		\item We say that two rays are \textit{equivalent}, denoted $p\sim q$, when up to removing finite prefixes they are equal and write $[p]$ for the equivalence class of a ray $p$ with respect to $\sim$.
		The set of all equivalence classes of rays is denoted $\cP$.
		\item A ray is called a \textit{straight line} if it is one of the following two sequences:
		\[\ell := \dots aaa \textrm{ or } r := \dots bbb\]
		and can be geometrically realised as a straight line with only left or right turns.
		Alternatively, $\ell$ (resp. $r$) is the ray that goes down the left (resp. right) side of $t_\infty$. 
		\item A ray is called \textit{eventually straight} if it belongs in either the equivalence class $[\ell]$ or $[r]$. 
		Equivalently, a ray is eventually straight if the ray is constant after some finite number of turns.
		We shall also describe the above two equivalence classes as being eventually straight.
	\end{enumerate}	
\end{definition}

\begin{remark}
	If $c$ is a period of a ray $p$, then any cyclic permutation of the letters of $c$ is also a period of $p$. For instance, the ray $\dots ababab$ admits a period $ab$ but a period $ba$.
	Moreover, note that the equivalence class of $c^\infty$ where $c$ is a prime word is the set of rays that are eventually periodic with $c$ for a period.
\end{remark}

{\bf Identifications.}
To summarise the different identifications of rays we appeal to in this paper, rays in $t_\infty$ can be identified as infinite sequences of $0,1$ that are read from \textit{left to right} and which correspond to elements in the Cantor space. Rays can also be identified as infinite sequences of $a,b$ which are read from \textit{right to left}. We can move between these two identifications by interchanging $0$'s with $a$'s, $1$'s with $b$'s, and by reversing the order of the sequence. 
Additionally, this procedure identifies vertices $\nu$ in $t_\infty$ with finite words in $\Mon(a,b)$. In particular, for a ray $p$ the vertex $\mu_n^p$ is identified with the word $p_n$. 
We shall use these two identifications of rays interchangeably without specification unless it is not clear from context.

\subsection{Certain subgroups of $F$}
\subsubsection{Stabiliser and fixed point subgroups}
Using the action of $F$ on the Cantor space we define some particular subgroups of $F$ that arise in our analysis. 
Then using the presentation of $F$ as tree-diagrams, we shall provide a more useful description of these subgroups as sets of tree-diagrams.

{\bf Stabiliser and fixed point subgroups associated to $F\act \cC$.}
Identify $F$ with its description as a subgroup of the homeomorphisms of the Cantor space $\cC$.
For any subset $A\subset \cC$, write
\[\Stab_F(A)=\{ g\in F:\ g(A)=A\} \text{ and } \Fix_F(A)=\{g\in F:\ g(a)=a, \forall a\in A\}\]
as the \textit{stabiliser subgroup} and \textit{fixed-point subgroup} of $A$, respectively. 
We call $$F_p := \{g\in F:\ g(p)=p\} = \Fix_F(\{p\})$$ a \textit{parabolic subgroup} where $p$ is either an element of the Cantor space or an element of $[0,1].$
Except when $p=0$ or $1$ this subgroup is proper.

We introduce another important subgroup that will be required in the main results.

\begin{definition} \label{subgroup of Fp definition}
	For any ray $p$ we define the subgroup $\widehat{F_p}$ of $F$ by
	\[\widehat{F_p} := \{g \in F : g(p) = p, g'(p) = 1\} \subset F.\]
\end{definition}

\begin{remark} \label{subgroup of Fp remark}	
	\begin{enumerate}[i]
		\item Note that $\widehat{F_p}$ is a normal subgroup of $F_p$ and $F_p/\widehat{F_p}\cong \Z$ when $p$ is rational. Moreover, $\widehat{F_p}$ can be defined as the group of $g\in F$ acting like the identity on a neighbourhood of $p$.
		\item It can be seen that when $p$ is not eventually periodic we have $F_p=\widehat{F_p}$. This will be shown below by Equation \ref{irrational stab group equation}.
		\item If $S:\cC\to[0,1]$ is the classical $F$-equivariant map we have $F_{S(p)}=F_p$. Although, we must take better care when dealing with the groups $\widehat F_p$. Indeed, observe that $p=\dots 0111$ and $q=\dots 1000$ are sent by $S$ to the same number $1/2$ but $\widehat{F_p}\neq\widehat{F_q}.$
	\end{enumerate}
\end{remark}

\subsubsection{Description of parabolic subgroups using tree-diagrams} \label{parabolic desc subsection}
Let $p$ be a ray given by a sequence of binary digits $(x_n : n \geq 1)$ and recall ${}_kp$ is the ray formed by removing the first $k$ digits of $p$.
Given $g=[t,s]\in F$ we have that there exists a unique leaf $\nu$ of $t$ and $\omega$ of $s$ so that $\nu,\omega$ lie in the ray $p$ (equivalently $p$ is in the sdi $I_\nu$ and $I_\omega$ associated to the vertices $\nu$ and $\omega$).
Thus, using the notation defined from the previous subsection, $\nu = \mu^p_m$ and $\omega = \mu^p_n$ where $m = \length(\nu)$ and $n = \length(\omega)$. It can be then verified that $g \in F_p$ if and only if $\mu^p_m$ and $\mu^p_n$ are corresponding leaves of $g$ and ${}_mp = {}_np$. 

Hence, suppose $p=w\cdot c^\infty$ is eventually periodic and thus can be expressed as a minimal finite prefix $w$ (minimal in the length of $w$) followed by a periodic sequence $c^\infty$ where $c$ is a period of $p$. Let $a = \length(w)$ and $b = \length(c)$. Then from the preceding paragraph we have
\begin{equation}\label{parabolic subgroup condition eqn}
	\begin{cases*}
		m = n, &\quad \text{ if } $m,n \leq a$ \\
		n-m \in b\Z, &\quad \text{ if } $m,n > a$.
	\end{cases*}
\end{equation} 

In the case when $p$ is {\it not eventually periodic} we can obtain an even stronger description of $F_p$. Indeed, since now $w$ will be an infinite sequence, Equation \ref{parabolic subgroup condition eqn} shows $g \in F_p$ if and only if $m = n$. Hence $\nu = \omega$ which implies that $g'(p) = 1$ and thus
\begin{equation} \label{irrational stab group equation}
	F_p = \widehat{F_p} = \{g \in F: g(p) = p, g'(p) = 1\} \text{ for all not eventually periodic $p$.}
\end{equation}

More generally, we have an easy description of the subgroups $\widehat{F_p}$ in terms of tree-diagrams for any ray $p$. An element $g = [t,s] \in F$ belongs in $\widehat{F_p}$ if and only if there exists two corresponding vertices of $t,s$ that coincide and the ray $p$ is contained in the sdi associated to that vertex.

\subsubsection{Monomial representations associated to parabolic subgroups} \label{general monomial rep section}
A large part of the analysis in the sequel will involve monomial representations associated to the parabolic subgroups of $F$. From the Mackey-Shoda criterion we are able to gleam information about these monomial representations by instead studying the commensurator of the parabolic subgroups. 
It is well-known and easy to prove that proper parabolic subgroups of $F$ are self-commensurating. This implies the following result.

\begin{lemma} \label{monomial parabolic irrep lemma}
Let $p$ be a ray which is not a straight line and $\chi$ be a one-dimensional representation of $F_p$. Then the monomial representation $\Ind_{F_p}^F \chi$ associated to $F_p$ is irreducible.
\end{lemma}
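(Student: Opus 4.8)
The plan is to deduce irreducibility directly from the self-commensuration of proper parabolic subgroups together with the Mackey--Shoda criterion (Theorem~\ref{Mackey-Shoda criteria}).

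First I would record that the hypothesis ``$p$ is not a straight line'' is precisely the statement that $F_p$ is a \emph{proper} subgroup of $F$. Indeed, under the equivariant surjection $S:\cC\to[0,1]$ one has $S(p)=0$ exactly when all digits of $p$ vanish, i.e.~$p=\ell$, and $S(p)=1$ exactly when $p=r$; thus $S(p)\in\{0,1\}$ if and only if $p$ is a straight line. Since every element of $F$ fixes the two endpoints, $F_p=F$ in those two cases and $F_p\subsetneq F$ otherwise. So the assumption guarantees $S(p)$ is an interior point and $F_p$ is proper.

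Next I would invoke the fact recalled immediately above the lemma: every proper parabolic subgroup of $F$ is self-commensurating, i.e.~$\Comm_F(F_p)=F_p$. If one wished to verify this by hand rather than cite it, the argument is to take $g\in\Comm_F(F_p)$, observe that $g^{-1}F_pg=\Stab_F(g^{-1}(p))=F_{g^{-1}(p)}$, and hence $F_p\cap g^{-1}F_pg=\Fix_F(\{p,g^{-1}(p)\})$; when $g^{-1}(p)\neq p$ this fixed-point subgroup sits with \emph{infinite} index inside $F_p$ (imposing a second interior fixed point kills the slope/rescaling freedom near $p$), contradicting commensurability, so $g$ must fix $p$ and $g\in F_p$.

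Finally, with $F_p$ self-commensurating, the set $\Comm_F(F_p)\setminus F_p$ is empty, so the irreducibility condition in Theorem~\ref{Mackey-Shoda criteria}(1) holds vacuously for \emph{any} one-dimensional $\chi:F_p\to\bS$; therefore $\Ind_{F_p}^F\chi$ is irreducible, as required. The only genuinely substantive point---were it not already granted---is the infinite-index claim for $\Fix_F(\{p,q\})$ inside $F_p$ when $q\neq p$, which is the heart of self-commensuration; everything else is immediate from the quoted criterion and the identification of the straight-line condition with properness of $F_p$.
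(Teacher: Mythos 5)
Your proof is correct and is essentially the paper's own argument: the paper likewise observes that a non-straight ray has a proper parabolic stabiliser, quotes the well-known fact that proper parabolic subgroups of $F$ are self-commensurating, and concludes from part (1) of Theorem \ref{Mackey-Shoda criteria}, whose hypothesis holds vacuously because $\Comm_F(F_p)\setminus F_p=\emptyset$. One caution about your optional by-hand verification: the infinite-index claim for $\Fix_F(\{p,g^{-1}(p)\})$ inside $F_p$ genuinely uses that $g^{-1}(p)$ lies in the $F$-orbit of $p$, since for the two distinct Cantor-space preimages $p\neq q$ of a single dyadic rational one has $F_p=F_q=\Fix_F(\{p,q\})$ (index one), so a second interior fixed point does not by itself force infinite index; the correct reason in the orbit case is that the $F_p$-orbit of $g^{-1}(p)$ is infinite, rather than the slope heuristic you give.
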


However, the above result does not extend to the subgroups $\widehat{F_p}$.
When $p$ is not a straight line then $\Comm_F(\widehat{F_p}) = F_p$ and when $p$ is a straight line then $\Comm_F(\widehat{F_p}) = F$. Hence, by Equation \ref{irrational stab group equation}, $\widehat{F_p}$ is not self-commensurating if and only if $p$ is eventually periodic. Thus, Theorem \ref{Mackey-Shoda criteria} shows that $\lambda_{F/\widehat{F_p}}$ is \textit{reducible} when $p$ is eventually periodic unlike the monomial representations associated to $F_p$. 

\subsection{Definition of Pythagorean representations} \label{sec:def-pyth}
We will now explain the specific class of Jones' representations that will be the focus of this paper. As before, we shall be brief and for further details we recommend the reader our previous article \cite{Brothier-Wijesena22} and the article where these representations were initially constructed \cite{Brothier-Jones19-Pyth}.

\subsubsection{Pythagorean pairs of operators}

Consider a pair of bounded linear operators $A, B \in B(\fH)$ acting on a Hilbert space $\fH$. We call $(A,B)$ a Pythagorean pair of operators if it satisfies the so called Pythagorean identity:
$$A^*A + B^* B = \id_\fH$$
where $\id_\fH$ is the identify operator of $\fH$ and $A^*$ is the adjoint of $A$. Based on the above identity we define the following universal $C^*$-algebra.

\begin{definition} \label{universal pythag algebra definition}
The Pythagorean algebra $P=P_2$ is the universal $C^*$-algebra generated by $a,b$ according to the relation
\[a^*a + b^*b = 1.\]
\end{definition}

\subsubsection{Construction of a Hilbert space}
Fix a Pythagorean pair $(A,B)$ over $\fH$. 
For each tree $t\in\fT$ we define $\fH_t$ to be the $n$th direct sum $\fH^n:=\fH^{\oplus n}$ where $n$ is the number of leaves of $t$.
For a vector $\xi = (\xi_1,\cdots,\xi_n) \in \fH_t$, to emphasise the tree $t$ and to avoid confusions we may instead write $(t, \xi)$ or $(t, \xi_1,\cdots,\xi_n)$.
A convenient and diagrammatic way to view a vector $\xi\in\fH_t$ is to consider the tree $t$ such that the leaf $\ell$ is decorated by $\xi_\ell \in \fH$. 
Hence, diagrammatically $\fH_t$ is the Hilbert space formed by decorating the leaves of $t$ with elements in $\fH$.

Now we aim to place an equivalence relation on the family $(\fH_t:\ t\in\fT)$. First define the map: 
\begin{align*}
	\Phi(f_{k,n})=\Phi_{A,B}(f_{k,n}):& \fH^n\to \fH^{n+1},\\
	& (\xi_1,\cdots,\xi_n)\mapsto (\xi_1,\cdots,\xi_{k-1}, A(\xi_k), B(\xi_k), \xi_{k+1},\cdots,\xi_n)
\end{align*}
where recall $f_{k,n}$ is an elementary forest. The above map is an isometry because $(A,B)$ is a Pythagorean pair. Since any forest $f$ is a finite composition of elementary forests (which may not necessarily be unique), we can define $\Phi(f)$ via compositions which will also be a well-defined isometry. 

Then for all trees $t \in \fT$ set $(t,\xi)\sim (f \circ t,\Phi(f)\xi)$ where $(t, \xi) \in \fH_t$ and $f$ is any forest where the composition with $t$ is compatible. We then consider the smallest equivalence relation $\sim$ generated by it. The quotient of the disjoint union $\sqcup_{t \in \fT} \fH_t$ by $\sim$ is a pre-Hilbert space $\scrK = \scrK_{A,B}$ which we complete into a Hilbert space $\scrH = \scrH_{A,B}$.

Hence, elements in the dense subspace $\scrK$ have representatives in the form $(t,\xi)$ which we shall write $[t,\xi]$ to denote the class associated to it.
Importantly, for all trees $t$ the space $\scrK$ contains a copy of $\fH_t$ via the natural inclusion $\fH_t \ni \xi \mapsto [t, \xi] \in \scrK$. In particular, we will commonly view $\fH$ embedded inside $\scrK$ and identify $\xi \in \fH$ with its copy $[e, \xi]$ inside $\scrK$. 

\begin{remark} \label{scrH dimension remark}
	As explained in \cite{Brothier-Wijesena22}, the Hilbert space $\scrH$ will always be infinite-dimensional regardless of the dimension of the initial Hilbert space. 
\end{remark}

\subsubsection{The Jones representation associated to a Pythagorean pair}
We can now construct a unitary representation $\sigma = \sigma_{A,B}$ of $F$ acting on the Hilbert space $\scrH$. Indeed, for an element $g = [t, s] \in F$ and vector $[s, \xi] \in \scrH$ we set
\[\sigma([t, s])[s, \xi] := [t, \xi].\]
The map $\sigma([t, s])$ extends to all of $\scrH$ since $(\fT, \preceq)$ is a directed poset and by using the equivalence relation $\sim$ on $(\fH_t : t \in \fT)$. It can be shown that this forms a unitary operator and is well-defined on the equivalence class $[t, s]$.

An example of the action $\sigma$ is shown below. In this case, $\sigma$ only changes the tree while retaining the original decoration.

\begin{center}
	\begin{tikzpicture}[baseline=0cm]
		\draw (0,0)--(-.5, -.5);
		\draw (0,0)--(.5, -.5);
		\draw (-.5, -.5)--(-.9, -1);
		\draw (-.5, -.5)--(-.1, -1);
		
		\node[label={\normalsize $\sigma($}] at (-1.1, -1) {};
		\node[label={\normalsize $,$}] at (.65, -1) {};
	\end{tikzpicture}%
	\begin{tikzpicture}[baseline=0cm]
		\draw (0,0)--(-.5, -.5);
		\draw (0,0)--(.5, -.5);
		\draw (.5, -.5)--(.1, -1);
		\draw (.5, -.5)--(.9, -1);
		
		\node[label={\normalsize $)$}] at (1.1, -1) {};
		\node[label={\normalsize $\cdot$}] at (1.35, -.9) {};
	\end{tikzpicture}%
	\begin{tikzpicture}[baseline=0cm]
		\draw (0,0)--(-.5, -.5);
		\draw (0,0)--(.5, -.5);
		\draw (.5, -.5)--(.1, -1);
		\draw (.5, -.5)--(.9, -1);
		
		\node[label={[yshift=-22pt] \normalsize $\xi_1$}] at (-.5, -.5) {};
		\node[label={[yshift=-22pt] \normalsize $\xi_2$}] at (.1, -1) {};
		\node[label={[yshift=-22pt] \normalsize $\xi_3$}] at (.9, -1) {};	
		
		\node[label={\normalsize $=$}] at (1.35, -1) {};
	\end{tikzpicture}%
	\begin{tikzpicture}[baseline=0cm]
		\draw (0,0)--(-.5, -.5);
		\draw (0,0)--(.5, -.5);
		\draw (-.5, -.5)--(-.9, -1);
		\draw (-.5, -.5)--(-.1, -1);
		
		\node[label={[yshift=-22pt] \normalsize $\xi_1$}] at (-.9, -1) {};
		\node[label={[yshift=-22pt] \normalsize $\xi_2$}] at (-.1, -1) {};
		\node[label={[yshift=-22pt] \normalsize $\xi_3$}] at (.5, -.5) {};		
	\end{tikzpicture}%
\end{center}

\begin{definition}We call $\sigma_{A,B}$ the \textit{Jones representation} or \textit{Pythagorean representation} associated to the Pythagorean pair $(A,B)$.
\end{definition}

\subsubsection{Partial Isometries on $\scrH$} \label{subsec:partial-isom}
A powerful tool in our analysis of Pythagorean representations is the family of partial isometries $(\tau_\nu : \nu \in \Ver) \subset B(\scrH)$ which were initially defined in \cite{Brothier-Wijesena22} (see Subsection 2.1).	 
Informally, the partial isometry $\tau_\nu$ can be defined in the following manner.

Fix $\nu \in \Ver$ and consider $[t, \xi] \in \scrK$. Up to taking representatives of $[t, \xi]$ we can assume that $\nu$ is a vertex of $t$. Define $t_\nu$ to be the sub-tree of $t$ with root $\nu$ and whose leaves are the leaves of $t$ which are children of $\nu$. Then we set $\tau_\nu([t, \xi]) := [t_\nu, \eta]$ where $\eta$ is the decoration of the leaves of $t$ in $[t, \xi]$ that are children of $\nu$. It can be shown that $\tau_\nu$ is well-defined and extends to a surjective partial isometry from $\scrH$ onto itself. 
It follows the adjoint $\tau_\nu^*$ is an isometry and is a right-inverse of $\tau_\nu$. 

\begin{example}
	Suppose we wish to compute $\tau_{01}([\wedge, (\xi_1, \xi_2)])$. The vertex $01$ is not contained in the caret $\wedge$, hence we first consider another representative of $[\wedge, (\xi_1, \xi_2)]$ by attaching a caret to the first leaf of $\wedge$ and decorating the two new leaves with $A\xi, B\xi$ from left to right. Since $01$ is now a leaf in the new tree, the operator $\tau_{01}$ simply takes the decoration of the leaf $01$.
	\begin{center}
		\begin{tikzpicture}[baseline=0cm, scale = 1]
			\draw (0,0)--(-.5, -.5);
			\draw (0,0)--(.5, -.5);
			
			\node[label={[yshift=-22pt] \normalsize $\xi_1$}] at (-.5, -.5) {};
			\node[label={[yshift=-22pt] \normalsize $\xi_2$}] at (.5, -.5) {};
			
			
			\node[label={[yshift= -3pt] \normalsize $\Phi_{A,B}(f_1)$}] at (2, -.7) {$\sim$};
		\end{tikzpicture}%
		\begin{tikzpicture}[baseline=0cm, scale = 1]
			\draw (0,0)--(-.5, -.5);
			\draw (0,0)--(.5, -.5);
			\draw (-.5, -.5)--(-.9, -1);
			\draw (-.5, -.5)--(-.1, -1);
			
			\node[label={[yshift=-22pt] \normalsize $A\xi_1$}] at (-.9, -1) {};
			\node[label={[yshift=-22pt] \normalsize $B\xi_1$}] at (-.1, -1) {};
			\node[label={[yshift=-22pt] \normalsize $\xi_2$}] at (.5, -.5) {};
			
			\node[label={[yshift= -3pt] \normalsize $\tau_{01}$}] at (1.4, -.7) {$\longmapsto$};
		\end{tikzpicture}%
		\begin{tikzpicture}[baseline=0cm, scale = 1]
			\node[label={[yshift=-25pt] \normalsize $B\xi_1$}] at (0, -.5) {$\bullet$};
		\end{tikzpicture}%
	\end{center}
\end{example}

Diagrammatically, $\tau_\nu$ acts on $(t, \xi) \in \fH_t \subset \scrK$ by first ``growing'' the tree to be large enough and then ``snipping'' the tree at $\nu$ while retaining the components of $\xi$ which are children of $\nu$. Furthermore, $\tau^*_\nu$ acts on $(t, \xi)$ by ``lifting'' the tree $t$, along with its components $\xi$, and attaching it to the vertex $\nu$ while setting all other components to zero. More generally, $\tau^*_\nu\tau_\omega$ is the partial isometry which ``snips'' the tree at $\omega$ and attaches the resulting subtree, along with its components, at the vertex $\nu$ while setting all other components to $0$. An example is show below.

\begin{center}
	\begin{tikzpicture}[baseline=0cm, scale = 1]
		\draw (0,0)--(-.7, -.5);
		\draw (0,0)--(.7, -.5);
		\draw (-.7, -.5)--(-1.1, -1);
		\draw (-.7, -.5)--(-.3, -1);
		\draw[thick] (.7, -.5)--(.3, -1);
		\draw[thick] (.7, -.5)--(1.1, -1);
		
		\node[label={[yshift=-22pt] \normalsize $\xi_1$}] at (-1.1, -1) {};
		\node[label={[yshift=-22pt] \normalsize $\xi_2$}] at (-.3, -1) {};
		\node[label={[yshift=-22pt] \normalsize $\xi_3$}] at (.3, -1) {};
		\node[label={[yshift=-22pt] \normalsize $\xi_4$}] at (1.1, -1) {};
		
		\node[label={[yshift= -3pt] \normalsize $\tau_{1}$}] at (1.6, -.7) {$\longmapsto$};
	\end{tikzpicture}%
	\begin{tikzpicture}[baseline=0cm, scale = 1]
		\draw[thick] (0,-.3)--(-.5, -.8);
		\draw[thick] (0,-.3)--(.5, -.8);
		
		\node[label={[yshift=-22pt] \normalsize $\xi_3$}] at (-.5, -.8) {};
		\node[label={[yshift=-22pt] \normalsize $\xi_4$}] at (.5, -.8) {};
		
		\node[label={[yshift= -3pt] \normalsize $\tau^*_{0}$}] at (1.1, -.7) {$\longmapsto$};
	\end{tikzpicture}%
	\begin{tikzpicture}[baseline=0cm, scale = 1]
		\draw (0,0)--(-.7, -.5);
		\draw (0,0)--(.7, -.5);
		\draw[thick] (-.7, -.5)--(-1.1, -1);
		\draw[thick] (-.7, -.5)--(-.3, -1);
		
		\node[label={[yshift=-22pt] \normalsize $\xi_3$}] at (-1.1, -1) {};
		\node[label={[yshift=-22pt] \normalsize $\xi_4$}] at (-.3, -1) {};
		\node[label={[yshift=-22pt] \normalsize $0$}] at (.7, -.5) {};		
	\end{tikzpicture}%
\end{center} 

In particular, we define the orthogonal projection:
$$\rho_\nu := \tau_\nu^*\tau_\nu$$ 
which sets the components to zero for all leaves which are children of vertices that are disjoint from $\nu$. Observe that if $\{\nu_i\}_{i=1}^n$ is a sdp then $\sum_{i=1}^n \rho_{\nu_i} = \id$.
	
The Pythagorean representation is closely related to the above maps.
Indeed, let $z \in \scrH$ and $g := [t, s] \in F$ where $\Leaf(t) := \{\nu_i\}_{i = 1}^n$ and $\Leaf(s) := \{\omega_i\}_{i = 1}^n$. Then the action $\sigma(g)$ can be interpreted as ``snipping'' $s$ at the vertex $\omega_i$ and then attaches the subtree to the vertex $\nu_i$, along with the components of $\xi$ that are children of $\omega_i$ (this corresponds to how $g$ maps the sdi $I_\omega$ to $I_\nu$). This is expressed in the following identity which will be frequently used in the sequel:
\begin{equation} \label{action rearrange equation}
	\sigma(g) = \sum_{i = 1}^n \tau^*_{\nu_i} \tau_{\omega_i}.
\end{equation}

\textbf{Projections associated to a ray.}
We can also associate each ray to a projection on $\scrH$. This construction will require notation and results introduced in Subsection \ref{lattice projection subsection}.

\begin{construction}
	Let $p$ be a ray and recall $(\mu_n^p : n \geq 0)$ is the sequence of vertices that $p$ passes through. This in turn induces a sequence of projections $(\rho_{\mu_n^p} : n \geq 0)$. 
	By definition, this sequence is monotonically decreasing and thus, by Proposition \ref{MCT projection prop}, converges in the SOT to the projection 
	$$\rho_p:=\bigwedge_{n \in \N} \rho_{\mu_{n}^p}.$$
\end{construction}

\begin{notation} \label{tau notation}
	For ease of notation, hereby we shall write $\rho_{p,n}$ for $\rho_{\mu_n^p}$ which gives $\rho_{p,n} \xrightarrow{s}\rho_p$. Similarly, we shall write $\tau_{p,n}, \tau^*_{p,n}$ for $\tau_{\mu^p_n}, \tau^*_{\mu^p_n}$, respectively.
\end{notation}

\begin{remark}The range of $\rho_p$ is in some sense the space of vectors which are entirely supported by the ray $p$ (this will be described in more detail in Subsection \ref{class vectors subsection}). 
	These vectors will play an important role in the decomposition of Pythagorean representations. However, note $\rho_p$ will be trivial in many cases. For example, if $\norm{A}, \norm{B} < 1$ then it can be shown that $\rho_p$ will simply be the zero operator for all rays $p$.
\end{remark}


\section{Canonical Decomposition of Pythagorean Representations} \label{decompose pythag rep section}
In this section we perform a powerful decomposition of an arbitrary Pythagorean representation $(\sigma,\scrH)$ by considering the actions of $A$ and $B$ on the smaller Hilbert space $\fH$.

\subsection{Monoid generated by $(A, B)$} \label{monoid section}

Fix a Pythagorean pair $(A, B)$ acting on $\fH$ and let $W := W_{A,B} \subset B(\fH)$ be the monoid generated by $A$ and $B$: the smallest subset of $B(\fH)$ containing $A,B,\id_\fH$ and closed under composition. 
Recall $\Mon(a,b)$ is the free monoid in two generators $a$ and $b$ which includes the empty word $e$. We equip $\Mon(a,b)$ with the usual word-length denoted $|\cdot|$.
We have a canonical monoid morphism 
$$\phi_{A,B}:\Mon(a,b)\to W_{A,B},\ a\mapsto A, b\mapsto B$$
which provides an action of $\Mon(a,b)$ on $\fH$.

If $w\in \Mon(a,b)$ and $\xi\in\fH$, then we write $w(\xi)$ or $w\xi$ for $\phi(w)(\xi)\in\fH$. 
With this convenient notation we obtain for instance that
$$\Phi(t_n)\xi = \oplus_{w: |w|=n} w\xi\in \fH^{2^n}$$
where $t_n$ is the regular binary trees with all leaves of length $n$ and $\Phi$ is the functor induced by $(A,B)$.

\subsection{Convergence of words arising from $\phi$.}
Using the monoid action $\phi$, we can define two different notions of convergence of words in $W$.

{\bf Convergence of words with respect to the Fr\'echet filter.}
We consider point-wise and uniform limits of words $w\in W$ to zero using the Fr\'echet filter of $\Mon(a,b).$
For $\xi, \eta \in \fH$, say $w\xi$ \textit{converges to $0$ for the Fr\'echet filter}, denoted 
$$\lim_{w\in W} w\xi=0,$$ 
if for all $\varep>0$ we have $\| w'\xi\|<\varep$ for all but finitely many $w'\in \Mon(a,b).$
Similarly, say that $w$ \textit{converges to $0$ for the Fr\'echet filter}, denoted 
$$\lim_{w\in W} w =0,$$ 
if for all $\varep>0$ we have $\|w'\|<\varep$ for all but finitely many $w'\in \Mon(a,b).$

\begin{remark}
\begin{enumerate}
\item We only consider convergence to zero because no other limits can appear when consider words of $A,B$ when $(A,B)$ is a Pythagorean pair.	
\item	In the remainder of the paper, we will prove a variety of results that may only hold when $\fH$ is finite-dimensional and thus where the strong operator topology coincides with the operator (or any other) norm topology of $B(\fH)$.
	However, we will also prove general statements that are valid for \textit{any} Pythagorean pairs including cases where $\fH$ is infinite-dimensional and thus where it makes sense to carefully define the topology considered.
	\end{enumerate}
\end{remark}

{\bf Convergence of words with respect to a ray.}
Here we use the identification of a ray $p$ as an infinite sequence in $a,b$. Hence, $p_n$ is a finite word in $a,b$ of length $n$ that we read from right to left.

\begin{definition}We say that \textit{$w\xi$ converges to $\eta$ for the ray $p$} if $\lim_{n\to\infty}p_n\xi=\eta$ for vectors $\xi, \eta \in \fH$.
Similarly, we say that \textit{$w$ converges to $C$ for the ray $p$} if $\lim_{n\to\infty} p_n=C$ for an operator $C \in B(\fH)$.\end{definition}
We now prove the non-trivial fact that converging (point-wise) to all rays to $0$ is equivalent to converging (point-wise) for the Fr\'echet filter to $0$.

\begin{proposition}\label{prop:Frechet}
	Consider a Pythagorean pair $(A,B)$ over $\fH$ with monoid of words $W$.
	For any $\xi\in\fH$ we have that $w\xi$ converges to $0$ for the Fr\'echet filter if and only if $w\xi$ converges to $0$ for any ray, i.e.$$\lim_{w\in W} w\xi=0 \text{ if and only if } \lim_{n\to \infty}p_n\xi=0 \text{ for all rays } p.$$
\end{proposition}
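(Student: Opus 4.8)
The plan is to prove the two implications separately, the forward one being essentially immediate while the reverse carries all the weight. For the direction $\lim_{w\in W} w\xi=0 \Rightarrow \lim_n p_n\xi=0$ for every ray $p$, I would simply observe that the finite subwords $p_0,p_1,p_2,\dots$ are pairwise distinct elements of $\Mon(a,b)$ since $|p_n|=n$. Given $\varep>0$, the Fr\'echet-filter hypothesis guarantees $\norm{w'\xi}<\varep$ for all but finitely many $w'\in\Mon(a,b)$, so in particular this fails for only finitely many of the $p_n$; hence $\norm{p_n\xi}<\varep$ for $n$ large, and as $\varep$ is arbitrary we get $\lim_n p_n\xi=0$.

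For the converse I would argue by contraposition. Suppose $\lim_{w\in W} w\xi=0$ fails; then there is some $\varep>0$ for which the set $H:=\{w\in\Mon(a,b):\ \norm{w\xi}\geq\varep\}$ is infinite. The decisive structural input is the Pythagorean identity: for any $\eta\in\fH$ we have $\norm{A\eta}^2+\norm{B\eta}^2=\langle (A^*A+B^*B)\eta,\eta\rangle=\norm{\eta}^2$, whence $\norm{A\eta}\leq\norm{\eta}$ and $\norm{B\eta}\leq\norm{\eta}$. Applying this with $\eta=w\xi$ and recalling that prepending a letter to $w$ corresponds to applying $A$ or $B$ on the left (so $(aw)\xi=A(w\xi)$ and $(bw)\xi=B(w\xi)$), the function $w\mapsto\norm{w\xi}$ is non-increasing under left-extension of words. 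Consequently $H$ is closed under passing to suffixes: if $w'\in H$ and $w$ is a suffix of $w'$, then $\norm{w\xi}\geq\norm{w'\xi}\geq\varep$, so $w\in H$.

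I would then view $\Mon(a,b)$ as the infinite rooted binary tree in which each $w$ has children $aw$ and $bw$, so that ancestors correspond to suffixes and infinite branches correspond exactly to rays. By the previous paragraph $H$ is a subtree containing the empty word $e$; it is infinite by assumption and finitely branching, so K\"onig's lemma furnishes an infinite branch $e=w_0,w_1,w_2,\dots$ lying entirely inside $H$. This branch is a ray $p$ with $p_n=w_n$ satisfying $\norm{p_n\xi}\geq\varep$ for every $n$, so $p_n\xi\not\to0$, contradicting the hypothesis. This establishes Fr\'echet convergence and completes the proof.

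The main obstacle is precisely the reverse implication: a priori the heavy set $H$ is just an infinite collection of words meeting infinitely many levels of the tree, which is not enough to extract a single non-vanishing ray. The crux is recognising that the monotonicity of $\norm{w\xi}$ supplied by the Pythagorean identity promotes $H$ into a genuine infinite finitely-branching subtree, after which K\"onig's lemma turns the mere infinitude of $H$ into a concrete ray witnessing non-convergence.
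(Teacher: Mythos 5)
Your proof is correct, and its overall skeleton matches the paper's (the forward implication is handled by the same observation, and the reverse implication in both cases works by extracting a ray from the set of words $w$ where $\norm{w\xi}$ stays large), but the pivotal combinatorial step is genuinely different. The paper exploits the Pythagorean identity \emph{quantitatively}: since $\xi\mapsto\oplus_{|w|=n}w\xi$ is an isometry, the level sets $Q_n=\{w:\ |w|=n,\ \norm{w\xi}\geq\varep\}$ have cardinality at most $N$ with $\tfrac1N<\varep^2$, and it then argues that the parent--child structure forces $\bigcup_n Q_n$ to lie inside at most $N$ rays, along each of which the hypothesis gives decay. You use only the \emph{qualitative} consequence that $A$ and $B$ are contractions: this makes your heavy set $H$ ancestor-closed, hence an infinite finitely-branching subtree of $\Mon(a,b)$, and K\"onig's lemma produces a single offending ray. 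Your route buys two things: it needs strictly less than the Pythagorean identity (any pair of contractions on $\fH$ would do, so the statement you actually prove is more general), and it sidesteps the covering step of the paper, which as literally stated needs extra care about ``dead ends'' (elements of $\bigcup_n Q_n$ with no descendants in the set, which cannot all be covered by $N$ rays in general); the K\"onig argument is exactly the rigorous way to handle that point, since it only asserts the existence of one infinite branch rather than a covering. What the paper's route buys in exchange is quantitative information -- a uniform bound of roughly $\varep^{-2}$ heavy words per level -- which your argument does not provide but which the proposition does not require.
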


\begin{proof}
	Fix $A,B,\fH,W$ as above and a vector $\xi\in\fH$ of norm one.
	The proof is deduced from two observations:
	\begin{enumerate}
		\item for any $n\geq 1$ there are only finitely many words of length smaller than $n$;
		\item for any $n\geq 1$ we have that $\xi\mapsto \oplus_{w: |w|=n} w\xi$ is an isometry (from $\fH$ to $\fH^{2^n}$).
	\end{enumerate}
	The first observation gives that $\lim_{w\in W}w\xi=0$ if and only if $\sup\{\|w\xi\|:\ w\in W, |w|=n\}$ tends to $0$ in $n$.
	Since $\sup\{ \|w\xi\|:\ w\in W, |w|=n\}\geq \|p_n\xi\|$ for any ray $p$ we deduce one direction (the easiest one) of the assertion.
	
	Let us prove the converse.
	Assume $\lim_{n\to\infty}p_n\xi=0$ for all rays $p$ and fix $\varep>0$.
	We want to show that for $n$ large enough $\sup\{\|w\xi\| :\ w\in W, |w|=n\}<\varep.$
	Choose $N\geq 1$ satisfying $\frac{1}{N}<\varep^2.$
	For each $n$ we define the set of words $$Q_n:=\{w\in \Mon(a,b):\ |w|=n, \|w\xi\|\geq\varep\}.$$
	We have $|Q_n|\leq N$ by the second observation and the assumption $\|\xi\|^2=1.$
	Observe now that since $A,B$ are contractions we have that if $w\in Q_{n+1}$, then necessarily $w$ is of the form $x\cdot w_n$ where $x=a$ or $b$ and $w_n\in Q_n$. 
	It means that elements of $Q_{n+1}$ are immediate children of elements of $Q_n$.
	This implies that $\cup_{n\geq 1} Q_n$ is contained in the vertex set of at most $N$ rays, i.e.~$\cup_{n\geq 1} Q_n\subset \cup_{i\in I} \Ver_{p^{(i)}}$ where $\{p^{(i)}:\ i\in I\}$ is a set of rays with at most $N$ elements.
	By assumption $w\xi$ converges to $0$ with respect to each of the ray $p^{(i)}$.
	This implies that for $n$ large enough $Q_n$ is empty.
\end{proof}

We immediately deduce the following fact on the projections $\rho_p$ defined in Subsection \ref{subsec:partial-isom}.

\begin{corollary} \label{zero projection corollary}
	If $\limw w\xi = 0$ for all $\xi \in \fH$, then the projections $\rho_p$ acting on $\scrH$ are equal to the zero operator for all rays $p$.
\end{corollary}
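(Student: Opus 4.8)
The plan is to reduce the statement to a norm estimate on a dense subset of $\scrH$ and then feed that estimate into the monotone convergence $\rho_{p,n}\xrightarrow{s}\rho_p$. Recall that $\rho_p=\bigwedge_{n}\rho_{p,n}$ is a decreasing limit, so in particular $\rho_p\leq\rho_{p,n}$ and hence $\|\rho_p z\|\leq\|\rho_{p,n}z\|$ for every $z\in\scrH$ and every $n$. Since $\rho_p$ is a bounded operator and the vectors $[t,\eta]$ span the dense subspace $\scrK\subset\scrH$, and each such vector is a finite sum of single-leaf vectors $\tau_\nu^*[e,\xi]$ (where $\tau_\nu^*$ lifts $\xi$ to the vertex $\nu$ and kills all other components), it suffices to prove that $\rho_p\,\tau_\nu^*[e,\xi]=0$ for every ray $p$, every $\nu\in\Ver$, and every $\xi\in\fH$.

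First I would fix such $p,\nu,\xi$ and analyse $\rho_{p,n}\tau_\nu^*[e,\xi]$ for large $n$, splitting into two cases according to the position of $\nu$ relative to $p$. If $p\notin I_\nu$, then since the nested sdi's $I_{\mu_n^p}$ shrink to the single point $p$, we have $I_{\mu_n^p}\cap I_\nu=\varnothing$ for all large $n$; as $\rho_{p,n}$ retains only the part of a vector supported on $I_{\mu_n^p}$, it annihilates the vector $\tau_\nu^*[e,\xi]$ (which is supported on $I_\nu$), giving $\rho_{p,n}\tau_\nu^*[e,\xi]=0$ eventually. If instead $\nu=\mu_m^p$ lies on the ray, then for $n\geq m$ I grow the representative so that $\mu_n^p$ is a leaf; applying the functor $\Phi$ spreads the decoration $\xi$ sitting at $\nu$ down the tree, and the component landing at the leaf $\mu_n^p$ is exactly $({}_mp)_{n-m}\,\xi$, the image of $\xi$ under the word read along the segment of $p$ from $\nu$ to $\mu_n^p$. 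Since $\rho_{p,n}$ keeps precisely this component and $\Phi$ is isometric, I obtain the clean identity
\[
\|\rho_{p,n}\tau_\nu^*[e,\xi]\|=\|({}_mp)_{n-m}\,\xi\|.
\]

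To conclude, I invoke Proposition \ref{prop:Frechet}: the hypothesis $\limw w\xi=0$ for all $\xi$ is equivalent to $\lim_n q_n\xi=0$ along every ray $q$, and in particular along the shifted ray $q={}_mp$. Hence $\|({}_mp)_{n-m}\xi\|\to 0$ as $n\to\infty$, so in both cases $\|\rho_{p,n}\tau_\nu^*[e,\xi]\|\to 0$. Combining with $\|\rho_p\,\tau_\nu^*[e,\xi]\|\leq\|\rho_{p,n}\tau_\nu^*[e,\xi]\|$ forces $\rho_p\,\tau_\nu^*[e,\xi]=0$, and density of the $\tau_\nu^*[e,\xi]$ together with boundedness of $\rho_p$ yields $\rho_p=0$ for every ray $p$.

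The only genuinely delicate point, and the step I expect to require the most care, is passing from the embedded copy of $\fH$ in $\scrH$ to all of $\scrH$: the condition $\limw w\xi=0$ directly controls only the vectors $[e,\xi]$, so the real content is the reduction to the lifted single-leaf vectors $\tau_\nu^*[e,\xi]$ and the correct bookkeeping of which decoration survives the projection $\rho_{p,n}$. Once one identifies that surviving component as $({}_mp)_{n-m}\xi$, the corollary indeed follows immediately from Proposition \ref{prop:Frechet}.
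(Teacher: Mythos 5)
Your proof is correct and takes the route the paper intends: the paper states this corollary without a written proof, as an immediate consequence of Proposition \ref{prop:Frechet}, and your argument is precisely the natural fleshing-out of that deduction. In particular, your reduction to the single-leaf vectors $\tau_\nu^*[e,\xi]$, the disjointness case, the identity $\norm{\rho_{p,n}\tau_\nu^*[e,\xi]}=\norm{({}_mp)_{n-m}\,\xi}$, and the final density-plus-monotonicity step are all sound.
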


When $\fH$ is finite-dimensional we deduce an interesting dichotomy regarding convergence of words and rays. The proof uses the compactness of the closed unit ball of $\fH$. 
We will see in Remark \ref{dichotomy counter prop} that this statement does not generalise in the infinite dimensional case.
\begin{proposition} \label{dichotomy convergence prop}
	Let $\fH$ be a {\bf finite-dimensional} Hilbert space, $(A,B)$ a Pythagorean pair over $\fH$, and $W$ the monoid generated by $A$ and $B$.
	We have the following dichotomy:
	\begin{center}$\lim_{w\in W}w=0$ or there exists $\xi\in\fH$ non-zero and a ray $p$ satisfying $\lim_{n\to\infty}\|p_n\xi\|=\|\xi\|.$\end{center}

\end{proposition}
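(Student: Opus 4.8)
The plan is to reduce the dichotomy to a statement about the single quantity $M_n := \max\{\|w\| : w \in \Mon(a,b),\ |w| = n\}$, which is a genuine maximum since there are only finitely many words of each length and the closed unit ball of $\fH$ is compact. First I would record two elementary properties: the sequence $M_n$ is non-increasing (if $w = x\cdot w'$ with $x \in \{a,b\}$, then $\|w\| \le \|w'\| \le M_{|w'|}$ because $A$ and $B$ are contractions) and submultiplicative, $M_{n+m} \le M_n M_m$ (split a word of length $n+m$ as a product of words of lengths $m$ and $n$). Combined with the first observation in the proof of Proposition \ref{prop:Frechet} (only finitely many words have length below any fixed bound), monotonicity of $M_n$ yields that $\lim_{w\in W} w = 0$ holds if and only if $M_n \to 0$.

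Next, the crucial point is that there is no intermediate regime. If $M_{n_0} < 1$ for some $n_0$, then $M_{kn_0} \le M_{n_0}^k \to 0$ and, by monotonicity, the whole sequence $M_n \to 0$, so the first alternative holds. Hence it suffices to treat the remaining case $M_n = 1$ for every $n$, and to produce in that case a nonzero $\xi$ and a ray $p$ with $\lim_n \|p_n\xi\| = \|\xi\|$. Here I would use that, since each $p_n$ is a contraction and $p_{n+1} = x\cdot p_n$, the sequence $\|p_n\xi\|$ is non-increasing and starts at $\|\xi\|$; therefore $\lim_n\|p_n\xi\| = \|\xi\|$ is equivalent to $\|p_n\xi\| = \|\xi\|$ for all $n$, i.e.\ to $\xi$ lying in $E_{p_n} := \ker(\id_\fH - p_n^* p_n)$ for every $n$.

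The search for such $\xi$ and $p$ is then organised on the binary tree of words. I would set $T := \{w : \|w\| = 1\} = \{w : E_w \ne \{0\}\}$, the two descriptions agreeing because in finite dimension $\|w\| = 1$ forces $1$ to be an attained eigenvalue of $w^*w$. A short computation shows $E_{x w'} \subseteq E_{w'}$: if $\|x w' \xi\| = \|\xi\|$, then squeezing $\|\xi\| = \|x w'\xi\| \le \|w'\xi\| \le \|\xi\|$ gives $\|w'\xi\| = \|\xi\|$. Consequently $T$ is closed under deleting the leftmost letter, so it is an ancestor-closed subtree of the binary tree, and since $M_n = 1$ it contains a vertex at every level. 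As the tree is finitely branching, K\"onig's lemma furnishes a ray $p$ all of whose prefixes $p_n$ lie in $T$, and the inclusions above make $(E_{p_n})_n$ a non-increasing chain of nonzero subspaces.

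Because $\fH$ is finite-dimensional, the dimensions of this chain stabilise, so $\bigcap_n E_{p_n} = E_{p_N} \ne \{0\}$ for $N$ large enough; any nonzero $\xi$ in it satisfies $\|p_n\xi\| = \|\xi\|$ for all $n$, which establishes the second alternative. The main obstacle is precisely the passage from "norm-one words at every level" to an actual common norm-preserving vector along a single ray: this is where finite-dimensionality is indispensable, both through K\"onig's lemma on the finitely-branching tree and through the stabilisation of the nested subspaces, and it is exactly the step that breaks down in the infinite-dimensional setting discussed in Remark \ref{dichotomy counter prop}.
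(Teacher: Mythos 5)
Your proof is correct, but it takes a genuinely different route from the paper's. The paper first invokes Proposition \ref{prop:Frechet} to get a ray $p$ and a unit vector $\xi$ with $\lim_n\|p_n\xi\|=\ell>0$, then uses compactness of the closed unit ball (Riesz) to extract an accumulation point $\eta$ of $(p_n\xi)_n$ with $\|\eta\|=\ell$, and finally exploits the Pythagorean identity $\|A\eta\|^2+\|B\eta\|^2=\|\eta\|^2$ to show that the letters of the connecting words $p_n^m$ between near-returns to $\eta$ are eventually forced to be constant; this constructs a ray $\hat p$ (in fact a tail ${}_kp$ of the original ray) with $\|\hat p_k\eta\|=\|\eta\|$ exactly. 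You instead work at the level of operator norms: the zero-one law for $M_n$ obtained from monotonicity and submultiplicativity replaces the appeal to Proposition \ref{prop:Frechet}, and K\"onig's lemma on the ancestor-closed tree $T$ of norm-one words, combined with stabilization of the nested nonzero subspaces $E_{p_n}=\ker(\id_\fH-p_n^*p_n)$, replaces the accumulation-point and letter-stabilization step. Both of your uses of finite-dimensionality are legitimate (attainment of the norm of $w^*w$ as an eigenvalue, and stabilization of dimensions), and your observation that the dichotomy sharpens to ``$M_n\to 0$ or $M_n\equiv 1$'' is a nice by-product. What the paper's argument buys, and yours does not, is extra structure that the paper re-uses downstream: in the proofs of Theorem \ref{complete scrH decomposition theorem} and Theorem \ref{sigma_p decompose finite theorem} the authors cite not the statement of Proposition \ref{dichotomy convergence prop} but its proof, namely that the norm-preserved vector $\eta$ is a limit of a subsequence $(p_{m_n}\xi)_n$ of images of the \emph{given} vector along the \emph{given} ray; your construction yields some ray and some vector with no relation to a prescribed $\xi$ and $p$, so it proves the proposition but could not substitute for the paper's proof where that internal structure is invoked. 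Conversely, your argument is more elementary and self-contained, needing neither Proposition \ref{prop:Frechet} nor the Pythagorean identity beyond the fact that $A$ and $B$ are contractions.
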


\begin{proof}
	
Consider a finite-dimensional Hilbert space $\fH$ and a Pythagorean pair $(A,B)$ over $\fH$.
	Assume that $\lim_{w\in W} w\neq 0$. 
	Hence, there exists a unit vector $\xi$ satisfying $\lim_{w\in W}\| w\xi\|\neq 0$.
	By Proposition \ref{prop:Frechet}, there exists a ray $p$ satisfying $\lim_{n\to\infty}\|p_n\xi\|\neq 0.$
	Since both $A,B$ have norm smaller that 1 we necessarily have that the sequence $(\|p_n\xi\|)_{n\geq 1}$ is decreasing. 
	Since this sequence is bounded below it must converge to a certain $\ell>0$.
	Note that $(p_n\xi)_{n\geq 1}$ is a sequence in the closed unit ball of $\fH$. Since $\fH$ is finite-dimensional we have that this ball is compact by the Riesz Theorem.
	Therefore, this sequence admits at least one accumulation point $\eta$.
	We necessarily have that $\|\eta\|=\ell$ and thus $\eta$ is non-zero.
	Our strategy is to construct a new ray $\hat p$ satisfying that $\|\hat p_n\eta\|=\ell$ for all $n\geq 1$.
	
	For any $\varep>0$ define $I_\varep$ the set of indices $n\geq 1$ satisfying that $\| \eta - p_n\xi\|<\varep.$
	By definition $I_\varep$ is infinite for any choice of $\varep>0$ (and obviously nested in $\varep$).
	If $n<m$ we write $p_n^m$ for the word in $a,b$ satisfying $p_m = p_n^m\cdot p_n$.
	Observe that if $n,m\in I_\varep$ and $n<m$, then 
	\begin{align*}
		\|\eta-p_n^m\eta\| & \leq \| \eta - p_m\xi\| + \|p_m\xi - p_n^m\eta\| \\
		& \leq \| \eta - p_m\xi\| + \|p_n^m(p_n\xi - \eta)\| \\
		& \leq \| \eta - p_m\xi\| + \|p_n^m\| \cdot \|(p_n\xi - \eta)\| \\
		& \leq 2\varep.
	\end{align*}
	This implies that 
	$$\|p_n^m\eta\| \geq \|\eta\| - \|p_n^m\eta -\eta\| \geq \|\eta\| - 2 \varep.$$
	Write $x_k$ the $k$th letter of $p$.
	The previous inequality implies that 
	$$\|x_{n+1} \eta\| \geq \|\eta\| - 2\varep \text{ for all } n\in I_\varep.$$
	From now on we choose $\varep>0$ small enough with respect to $\ell=\|\eta\|$ so that $\|\eta\| - 2\varep> \|\eta\|/\sqrt 2.$
	This implies that $I_\varep\ni n\mapsto x_{n+1}$ is constant.
	Indeed, if both $a,b$ would appear as $x_{n+1}$ and $x_{m+1}$ for $n,m\in I_\varep$ we would contradict the Pythagorean equation of $(A,B)$ since we would have:
	$$\|\eta\|^2 = \|A\eta\|^2+\|B\eta\|^2 = \|x_{n+1}\eta\|^2+\|x_{m+1}\eta\|^2> \|\eta\|^2,$$
	a contradiction.
	Let $y_1\in\{a,b\}$ be the letter equal to $x_{n+1}$ for $n\in I_\varep.$
	Observe that $$\|y_1\eta\|\geq \|\eta\|-2\varep' \text{ for all } 0<\varep'\leq\varep.$$
	We deduce that $\|y_1\eta\|=\|\eta\|$.
	A similar reasoning can be applied to the second letter of $p_n^m$ (for $m\geq n+2$ and $n,m\in I_\varep$) and the vector $y_1\eta$ to prove that $n\ni I_\varep\mapsto x_{n+2}$ is constant equal to a certain $y_2$ and moreover $\|y_2y_1\eta\|=\|\eta\|$.
	By induction we obtain a sequence $(y_n)_{n\geq 1}$ and thus a new ray $\hat p$ from it satisfying that 
	$$\|\hat p_k\eta\|=\|y_k\dots y_1 \eta\| = \|\eta\| \text{ for all } k\geq 1.$$
	In particular, note that the ray $\hat p$ is periodic and is equal to ${}_kp$ for some $k \in \N$.
	We have proven that if $\lim_{w\in W}w\neq 0$, then there exists a unit vector (here $\eta/\ell$) and a ray (here $\hat p$) satisfying $\lim_{n\to\infty}\|\hat p_n \eta/\ell\|=1.$
	The converse is obvious.	
\end{proof}

By identifying $\xi$ with its copy inside $\scrH$, we observe that $\lim_{n\to\infty}\norm{p_n\xi} = 1$ if and only if $\rho_p(\xi) = \xi$. 
Hence, either $\limw w=0$ or there exists a ray $p$ satisfying $\Ran(p)\cap \fH\neq \{0\}$.

\subsection{Jones sub-representations and direct sums}

The interest in studying $\phi$-invariant subspaces of $\fH$ is that they can be naturally associated to sub-representations of $\sigma$ in the following manner.

\begin{construction} \label{subspace of scrH from fH construction}
	Let $\fX \subset \fH$ be a $\phi$-invariant subspace ($\fX$ may not necessarily be closed). For a tree $t \in \fT$, write $\fX_t$ to be the subspace of $\fH_t$ that is:
	\[\fX_t := \{(t, \xi): \xi \in \fX^{\tar(t)}\} \subset \fH_t.\]
	Identify $\fX_t$ with its copy in $\scrH$ and then define
	\[\scrK_\fX := \bigcup_{t\in\fT} \fX_t \subset \scrK, \quad \scrH_\fX := \overline{\scrK_\fX} \subset \scrH.\]
\end{construction}
Informally, $\scrH_\fX$ is the closure of the set of all trees whose leaves are decorated with elements in $\fX$. We easily deduce the following fact.

\begin{proposition}\label{subrep of sigma from fH proposition}
	The subspace $\scrH_\fX$ is invariant under the action of $\sigma$ providing a sub-representation
	\[\sigma_\fX := \sigma\restriction_{\scrH_\fX}.\]
	Additionally, $\sigma_\fX$ is equivalent to the Pythagorean representation $\sigma_{A\restriction_{\overline{\fX}}, B\restriction_{\overline{\fX}}}$.
\end{proposition}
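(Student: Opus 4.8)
The plan is to first verify that $\scrK_\fX$ is a genuine linear subspace of $\scrK$, then deduce $\sigma$-invariance from the directedness of $(\fT,\preceq)$, and finally exhibit an explicit intertwining unitary identifying $\sigma_\fX$ with the Pythagorean representation built on $\overline{\fX}$. Throughout I would simply unwind the definitions of $\Phi$, $\sim$, and $\sigma$, keeping track of where the $\phi$-invariance of $\fX$ is used.

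First I would check that $\scrK_\fX$ is a subspace. Because $\fX$ is $\phi$-invariant we have $A\fX\subset\fX$ and $B\fX\subset\fX$, so for any forest $f$ the isometry $\Phi(f)$ sends $\fX$-decorated leaves to $\fX$-decorated leaves, i.e.~$\Phi(f)(\fX^{\tar(t)})\subset\fX^{\tar(f\circ t)}$. Hence the relation $(t,\xi)\sim(f\circ t,\Phi(f)\xi)$ restricts consistently to $\sqcup_t\fX_t$, and since $(\fT,\preceq)$ is directed any two classes in $\scrK_\fX$ admit representatives over a common tree, so $\scrK_\fX$ is closed under addition. For invariance, I would take $g=[t,s]\in F$ and $z\in\scrK_\fX$; passing to a common refinement I may write $z=[s,\xi]$ with $\xi\in\fX^{\tar(s)}$, whence $\sigma(g)z=[t,\xi]\in\fX_t\subset\scrK_\fX$. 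As each $\sigma(g)$ is unitary and $\scrH_\fX=\overline{\scrK_\fX}$, continuity extends invariance to $\scrH_\fX$; applying this to both $g$ and $g^{-1}$ shows $\scrH_\fX$ is $\sigma$-invariant, so $\sigma_\fX:=\sigma\restriction_{\scrH_\fX}$ is a well-defined sub-representation.

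For the equivalence, set $\fK:=\overline{\fX}$. Continuity of $A,B$ upgrades $A\fX\subset\fX$, $B\fX\subset\fX$ to $A\fK\subset\fK$, $B\fK\subset\fK$, so $A|_\fK,B|_\fK\in B(\fK)$ make sense; writing $P$ for the orthogonal projection onto $\fK$ one computes $(A|_\fK)^*=PA^*|_\fK$ and likewise for $B$, so for $\xi\in\fK$ one gets $(A|_\fK)^*(A|_\fK)\xi+(B|_\fK)^*(B|_\fK)\xi=P(A^*A+B^*B)\xi=P\xi=\xi$. Thus $(A|_\fK,B|_\fK)$ is a Pythagorean pair and $\sigma_{A|_\fK,B|_\fK}$ lives on its own space $\scrH_{A|_\fK,B|_\fK}$. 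The key observation is that the functor $\Phi_{A|_\fK,B|_\fK}$ is precisely the restriction of $\Phi_{A,B}$ to $\fK$-decorated trees, so the map $U\colon[t,\xi]\mapsto[t,\xi]$ sending a $\fK$-decorated class to the corresponding class in $\scrH$ is well-defined and inner-product preserving, and it intertwines the two actions since both are given by $[t,s]\colon[s,\xi]\mapsto[t,\xi]$.

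Finally I would identify the image. On the dense subspace the range of $U$ is $\bigcup_t\{[t,\xi]:\xi\in\fK^{\tar(t)}\}=\bigcup_t\overline{\fX_t}$, using $\overline{\fX}^{\,\tar(t)}=\overline{\fX^{\tar(t)}}$; its closure equals $\overline{\bigcup_t\fX_t}=\scrH_\fX$ since each $\fX_t$ is dense in $\overline{\fX_t}$. Hence $U$ extends to a unitary $\scrH_{A|_\fK,B|_\fK}\to\scrH_\fX$ intertwining $\sigma_{A|_\fK,B|_\fK}$ with $\sigma_\fX$, which is the claimed equivalence. I expect the only point requiring real care to be this last closure bookkeeping: one must move between decorations valued in $\fX$ and in its closure $\fK$ at the level of trees and confirm the two completions coincide, while everything else is a direct verification from the definitions.
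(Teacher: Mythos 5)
Your proof is correct and complete, and it is essentially the argument the paper intends: the paper states this proposition without proof (``We easily deduce the following fact'' following Construction \ref{subspace of scrH from fH construction}), leaving exactly this verification to the reader. Your three steps --- $\Phi$-compatibility of $\fX$-decorations plus directedness of $(\fT,\preceq)$ for invariance, the computation $(A\restriction_{\overline{\fX}})^*=PA^*\restriction_{\overline{\fX}}$ giving the Pythagorean identity on $\overline{\fX}$, and the canonical intertwining unitary $[t,\xi]\mapsto[t,\xi]$ together with the closure bookkeeping identifying its range with $\scrH_\fX$ --- supply all the details correctly.
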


In the sequel we shall freely identify the above two representations.

Conversely, if $(A,B)$ and $(A',B')$ are two Pythagorean pairs acting on $\fH$ and $\fH'$ with associated representations $\sigma$ and $\sigma'$, then $(A\oplus A',B\oplus B')$ is a Pythagorean pair acting on $\fH\oplus\fH'$ yielding the representation $\sigma\oplus\sigma'$.
Similarly, we may consider arbitrary direct sums and even direct integrals of Pythagorean pairs and Pythagorean representations.

\subsection{Classes of vectors in $\fH$.} \label{class vectors subsection}
We now introduce some different classes of vectors in $\fH$ that will play an important role in our decomposition of Pythagorean representations.

\begin{definition} \label{class of vectors definition}
	Let $p$ be a ray and $\xi$ be a non-zero vector in $\fH$. Then we say:
	\begin{itemize}
		\item $\xi$ is a vector \textit{contained} in the ray $p$ if $\lim_{n \to \infty}\norm{p_n\xi} = \norm{\xi}$ (or equivalently $\norm{p_n\xi} = \norm{\xi}$ for all $n \in \N$);
		\item $\xi$ is a vector \textit{eventually contained} in the ray $p$ if $\xi$ is contained in some ray belonging to the class $[p]$ (we shall also frequently say $\xi$ is eventually contained in $[p]$);
		\item $\xi$ is a vector \textit{partially contained} in the ray $p$ if $\lim_{n \to\infty} \norm{p_n\xi} = c > 0$ and there does not exist $m \in \N$ such that $\norm{p_m\xi} = c$;
		\item $\xi$ is a vector \textit{annihilated} by all rays if $\lim_{n \to \infty} \norm{q_n\xi} =0$ for all rays $q$ (or equivalently $\limw w\xi = 0$ from Proposition \ref{prop:Frechet}).
	\end{itemize}
\end{definition}

\begin{remark}
	Note, if $\xi$ is partially contained in $p$ then $p_n\xi$ is not contained in any ray for all $n \in \N$. Further, since necessarily $\norm{A}, \norm{B} \leq 1$, then $(\norm{p_n\xi} : n \geq 0)$ forms a decreasing sequence which is bounded below by $0$, and thus the limit always exists. Hence, either $\xi$ is partially contained in some ray, annihilated by all rays or there exists a ray $p$ and $n \in \N$ such that $p_n\xi$ is contained in the ray ${}_np$.
\end{remark}

While the above definitions of the different classes of vectors are analytic in nature and only relies on the monoid action on $\fH$, the definitions are motivated by its diagrammatic implication on the larger Hilbert space $\scrH$. Indeed, let $\xi$ be a vector in $\fH$ identified with its image $[e, \xi]$ inside $\scrH$. 
Moreover, assume that $\xi$ is contained in a fixed ray $p$.
If we consider representatives $(t_n, \ti \xi_n)$ of $\xi$ then the component of $\ti \xi_n$ which lies in the ray $p$ will have its norm conserved as $n$ increased while all other components of $\ti \xi_n$ must be zero due to the equality $\norm{\xi} = \norm{\ti\xi_n}$. \\
Pictorially, this can be visualised by considering the infinite tree $t_\infty$ and labelling each of the vertices $\nu$ with the components $\tau_\nu(\xi)$ of $\xi$. Then if we were to highlight all the vertices with non-zero components with norm equal to $\norm{\xi}$ and connect the edges then we would form the ray $p$ while all the other vertices would be labelled with zeroes. This visualisation motivates the terminology of $\xi$ being ``contained'' in $p$ and shows how $\xi$ ``vanishes'' when it leaves the ray.\\
As an example, consider when $\fH = \C^2, 
A = \begin{pmatrix}
	0 & 0 \\
	1 & 0
\end{pmatrix},
B = \begin{pmatrix}
	0 & 1 \\
	0 & 0
\end{pmatrix}$. 
Then the vector $e_1=\begin{pmatrix}1\\ 0\end{pmatrix}  \in \fH$ is contained in the zig-zag ray $\dots aba$.

\begin{center}
	\begin{tikzpicture}[baseline=0cm]
		\draw[thick] (0,0)--(-.5, -.5);
		\draw (0,0)--(.5, -.5);
		\draw (-.5, -.9)--(-1, -1.4);
		\draw[thick] (-.5, -.9)--(0, -1.4);
		\draw[thick] (0, -1.8)--(-.5, -2.3);
		\draw (0, -1.8)--(.5, -2.3);
		
		\node[label={[yshift=-5pt] \footnotesize $e_1$}] at (0, 0) {};
		\node[label={[yshift=-18pt] \footnotesize $e_2$}] at (-.5, -.5) {};
		\node[label={[yshift=-18pt] \footnotesize $0$}] at (.5, -.5) {};
		\node[label={[yshift=-18pt] \footnotesize $0$}] at (-1, -1.4) {};
		\node[label={[yshift=-18pt] \footnotesize $e_1$}] at (0, -1.4) {};
		\node[label={[yshift=-18pt] \footnotesize $e_2$}] at (-.5, -2.3) {};
		\node[label={[yshift=-18pt] \footnotesize $0$}] at (.5, -2.3) {};
	\end{tikzpicture}%
\end{center}

Similarly, the above diagram shows that $e_2$ is contained in the ray $\dots bab$. 
The preceding discussion is summarised in the following easy but important observation.

\begin{observation} \label{contained vector norm obs}
	A non-zero vector $\xi \in \fH$ is contained in a ray $p$ if and only $\xi \in \Ran(\rho_p) \cap \fH$. Equivalently, $\norm{\tau_{\nu}(\xi)} = \norm{\rho_{\nu}(\xi)} = \norm{\xi}$ for $\nu \in \Ver_p$ and $\rho_\nu(\xi) = \tau_{\nu}(\xi) = 0$ for $\nu \notin \Ver_{p}$. 
\end{observation}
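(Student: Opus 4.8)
The plan is to translate the analytic containment condition into a statement about the projection $\rho_p$, and then to read off both equivalences from elementary facts about orthogonal projections together with the monotone convergence $\rho_{p,n}\xrightarrow{s}\rho_p$ recalled in Subsection \ref{lattice projection subsection}. Since $\xi\in\fH$ is assumed, membership $\xi\in\Ran(\rho_p)\cap\fH$ is the same as $\rho_p\xi=\xi$. The computational heart will be the identity $\norm{\rho_{p,n}\xi}=\norm{\tau_{p,n}\xi}=\norm{p_n\xi}$: growing the representative $[e,\xi]$ to the regular tree $t_n$ decorates the leaf $\mu_n^p$ by $p_n\xi$, so $\tau_{p,n}(\xi)=p_n\xi$, and since $\tau_{p,n}^*$ is an isometry we get $\norm{\rho_{p,n}\xi}=\norm{\tau_{p,n}^*\tau_{p,n}\xi}=\norm{\tau_{p,n}\xi}$. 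As $A,B$ are contractions the sequence $(\norm{p_n\xi})_n$ is decreasing, so $\lim_n\norm{p_n\xi}=\norm{\xi}$ is equivalent to $\norm{p_n\xi}=\norm{\xi}$ for every $n$, which justifies the parenthetical in the definition.

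For the main equivalence I would use the projection fact that $\norm{P\xi}=\norm{\xi}$ if and only if $P\xi=\xi$ (from $\norm{\xi}^2=\norm{P\xi}^2+\norm{(\id-P)\xi}^2$). If $\xi$ is contained in $p$, then $\norm{\rho_{p,n}\xi}=\norm{p_n\xi}=\norm{\xi}$ forces $\rho_{p,n}\xi=\xi$ for all $n$; passing to the SOT limit via $\rho_{p,n}\xrightarrow{s}\rho_p$ yields $\rho_p\xi=\xi$. Conversely, if $\rho_p\xi=\xi$, then since $\rho_p=\bigwedge_m\rho_{p,m}\leq\rho_{p,n}$ we have $\rho_{p,n}\xi=\rho_{p,n}\rho_p\xi=\rho_p\xi=\xi$, whence $\norm{p_n\xi}=\norm{\xi}$ for all $n$ and $\xi$ is contained in $p$.

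It then remains to match this with the vertexwise description. Assuming $\xi$ is contained in $p$, for $\nu=\mu_n^p\in\Ver_p$ the computation above already gives $\norm{\tau_\nu\xi}=\norm{\rho_\nu\xi}=\norm{p_n\xi}=\norm{\xi}$. For $\nu\notin\Ver_p$, the path to $\nu$ leaves the ray at some finite stage, so $\nu$ is disjoint from $\mu_m^p$ for all large $m$, giving $\rho_\nu\rho_{p,m}=0$ for such $m$; since left multiplication by the fixed operator $\rho_\nu$ is SOT-continuous and $\rho_{p,m}\xrightarrow{s}\rho_p$, I conclude $\rho_\nu\rho_p=0$, hence $\rho_\nu\xi=\rho_\nu\rho_p\xi=0$, and then $\tau_\nu\xi=0$ because $\norm{\tau_\nu\xi}=\norm{\rho_\nu\xi}$. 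The converse is immediate, as the stated conditions include $\norm{\tau_{\mu_n^p}\xi}=\norm{p_n\xi}=\norm{\xi}$ for all $n$, which is exactly containment in $p$.

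I expect the only mildly delicate points to be the identification $\tau_{p,n}(\xi)=p_n\xi$ (resting on how $\tau_\nu$ acts after growing a representative to $t_n$) and the passage $\rho_\nu\rho_{p,m}=0\Rightarrow\rho_\nu\rho_p=0$ for off-ray vertices, which combines disjointness of vertices (extend to a standard dyadic partition and use $\sum_i\rho_{\nu_i}=\id$) with SOT-continuity of one-sided multiplication; everything else is a direct application of the projection-lattice facts already recalled.
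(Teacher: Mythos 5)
Your proof is correct and follows essentially the same route as the paper: the paper states this result as an unproved observation summarising the preceding diagrammatic discussion, and your argument is a faithful formalisation of exactly that discussion. The ingredients you use --- the identification $\tau_{p,n}(\xi)=p_n\xi$, the identity $\norm{\rho_{p,n}\xi}=\norm{p_n\xi}$, the Pythagorean projection fact $\norm{P\xi}=\norm{\xi}\Leftrightarrow P\xi=\xi$, orthogonality of $\rho_\nu$ and $\rho_{p,m}$ at disjoint vertices, and the SOT convergence $\rho_{p,n}\xrightarrow{s}\rho_p$ --- are precisely those the authors rely on implicitly.
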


Again, consider $\xi \in \fH$ with representatives $(t_n, \ti \xi_n) \in [e, \xi]$, and now suppose $\xi$ is a vector annihilated by all rays. 
Then this is equivalent in some sense to requiring all the components of $\ti \xi_n$ tend to $0$ \textit{uniformly} as $n$ tends to $\infty$. 
If $\xi$ is instead partially contained in a ray $p$ then $\xi \notin \ker(\rho_p) \cup \Ran(\rho_p)$.

\begin{example}\label{dichotomy counter prop}
Proposition \ref{dichotomy convergence prop} asserts that if $\fH$ is finite-dimensional then either every vector is annihilated by all rays or there exists a vector contained in some ray. 
We provide a counter-example in the {\it infinite-dimensional case}.
Given a ray $p$ we construct a Pythagorean pair $(A,B)$ acting on an infinite dimensional $\fH$ and satisfying for all ray $p$ that:
	\begin{itemize}
		\item there exists a unit vector $\xi\in\fH$ partially contained in $p$; and
		\item for any ray $q$ and any unit vector $\eta\in\fH$ we have $\lim_{n\to\infty}\| q_n \eta\| <1.$
	\end{itemize}

Observe that if  $1$ is not an eigenvalue of $A^*A$ nor $B^*B$ (i.e.~$A$ and $B$ are strict contractions), then the second item is satisfied.
	Fix a ray $p$ and let $\fH:=\ell^2(\Ver(t_\infty))$ be the Hilbert space of square-summable families of complex numbers indexed by the vertex set of the infinite rooted binary regular tree $t_\infty$.
	As usual $\{\delta_\nu:\ \nu\in \Ver(t_\infty)\}$ denotes the standard orthonormal basis of $\fH$.
	Recall $(\mu_n:\ n\geq 0)$ are the vertices of the ray $p$ so that $\mu_0$ is the root node of $t_\infty$ and $\mu_n$ is at distance $n$ from the root node (note the superscript $p$ has been dropped).
	Choose any sequence $(c_n:\ n\geq 0)$ of elements of $(0,1)$ satisfying that their product converges to $1/2$.
	Finally, for each $n\geq 1$ we write $x_n$ for the $n$th letter (from the right) of the infinite word $p$ (hence $x_n=a$ if the $n$th edge of $p$ is a left-edge and $b$ otherwise).
	If $x_n=a$ we set 
	$$A(\delta_{\mu_{n-1}})=c_{n-1}\delta_{\mu_{n}} \text{ and } B(\delta_{\mu_{n-1}})=\sqrt{1-c_{n-1}^2} \delta_{\mu_{n}}.$$
	If $x_n=b$, then we swap $A$ and $B$ in the equations above.
	If $\nu$ is a vertex of $t_\infty$ that is not a vertex of $p$, then we set 
	$$A(\delta_\nu)=B(\delta_\nu) =1/\sqrt{2} \delta_\nu.$$
	This determines $A$ and $B$ on the standard orthonormal basis and they extend into bounded linear operators on $\fH$.
	Moreover, they satisfy the Pythagorean identity and we have that the spectra of $A$ and $B$ are subsets of 
	$$\{ c_n^2, 1-c_n^2, 1/2:\ n\geq 0\}.$$
	In particular, $1$ is not an eigenvalue of $A^*A$ nor $B^*B$ implying the second item and thus there are no vectors in $\fH$ which are contained in any ray.
	Consider the unit vector $\xi:=\delta_{\mu_\varnothing}$ and observe that $$p_n\xi= \left(\prod_{i=0}^{n-1} c_i \right)\delta_{\mu_{n}}$$ implying that $\lim_{n\to\infty}\|p_n \xi\| = 1/2 > 0.$ 
	Further, there is no $m \in \N$ such that $\norm{p_m\xi} = 1/2$ since $c_n < 1$ for all $n$.
\end{example}

\subsection{Subspaces of $\fH$ associated to classes of vectors} \label{subspace of fH subsection}
To the classes of vectors defined in the previous subsection we shall associate them with $\phi$-invariant subspaces of $\fH$. 
Many of the following definitions and constructions can be more easily understood by studying some simple examples. We suggest the reader to study in tandem the examples provided in Subsection \ref{examples of pythag rep subsection} to help inform the motivation for each of the below constructions.

\begin{definition} \label{subspace of fH defintiion}
	Let $p$ be a ray and define the following subspaces of $\fH$.
	\begin{enumerate}[i]
		\item \label{item fX def} \textit{(subspace of vectors contained in $p$)} 
		Define
		\begin{equation*}
			\fX^p := \{\xi \in \fH : \norm{p_n\xi} = \norm{\xi} \textrm{ for all } n \in \N\}.
		\end{equation*}
		\item \label{item fVp def} \textit{(subspace of vectors quasi-contained in $p$)}
		Define $\fV^p$ to be the set of $\xi\in\fH$ such that there exist $m \geq 1$, $N \in \N$ and $m$ rays $p^{(1)}, \dots, p^{(m)}$ in the equivalence class $[p]$ satisfying
		\begin{equation*} 
			\norm{\xi}^2 = \sum_{k=1}^m \norm{p^{(k)}_n\xi}^2 \textrm{ for all } n \geq N.
		\end{equation*}
		\item \label{item fV def} \textit{(subspace of vectors quasi-contained in rays)}
		Define $\fV$ be the subspace of all vector $\xi \in \fH$ such that there exists $m \geq 1$, $N \in N$ and $m$ rays $p^{(1)}, \dots, p^{(m)}$ such that
		\[\norm{\xi}^2 = \sum_{k=1}^m \norm{p^{(k)}_n\xi}^2 \textrm{ for all } n \geq N.\]
		\item \label{item fW def} \textit{(subspace of vectors partially contained in rays)}
		Define $\fW$ to be the subspace of $\fH$ which consists of all vectors $\xi \in \fH$ such that there exists a set of rays $\{p^{(i)}\}_{i \in I}$ satisfying
		\begin{equation*}
			\phi(v)\xi \in \fV^\perp \textrm{ for all } v \in \Mon(a,b) \textrm{ and } \phi(w)\xi = 0 \textrm{ for all } w \notin \bigcup_{i \in I} \{p^{(i)}_n\}_{n \in \N}
		\end{equation*}
		and $\xi$ is partially contained in each of the rays $p^{(i)}$.
		\item \label{item fU def} \textit{(subspace of vectors annihilated by all rays)} 
		Define
		\begin{equation*}
			\fU = \{\xi \in \fH : \lim_{n\to\infty}\norm{p_n\xi} = 0 \textrm{ for all rays } p\} 
			= \{\xi \in \fH: \limw w\xi = 0\}
		\end{equation*}
		where equality of the two expressions is given by Proposition \ref{prop:Frechet}.
	\end{enumerate}
\end{definition}

\begin{remark} \label{subspace of fH remark}
	\begin{enumerate}[i]
		\item If the ray $p$ starts by $a$, then $\fX^p\subset \ker(B)$ since $\xi\mapsto (A\xi,B\xi)$ is an isometry.		
		\item Note that $\fV^p$ only depends on the class $[p]$ of $p$ and forms a vector subspace of $\fH$ containing $\fX^p$.
		Subsequently, for $[p] \in \cP$ we shall write $\fV^{[p]}$ to mean $\fV^p$ for any ray $p$ in the class $[p]$.
		\item Using notations from item \ref{item fVp def} above, observe that if $\xi\in\fV^p$, then $\norm{p_{N+j}^{(k)}\xi} = \norm{p_N^{(k)}\xi}$ for all $1\leq k\leq m, 1\leq j$ implying that $p_N^{(k)}\xi$ is contained in a truncation of $p^{(k)}$ and thus $\xi$ is eventually contained in finitely many rays in the class of $p$. Hence, we shall say elements of $\fV^p$ are the vectors {\it quasi-contained} in the ray $p$.
		A similar comment also applies to elements of $\fV$.
		\item Clearly $\fW$ is orthogonal to $\fV$ and can be described to be ``strongly'' orthogonal in some sense. This condition is required to ensure $\fW$ is $\phi$-invariant as we will see in Proposition \ref{alt subspace presentation prop}. Interestingly, this condition forces $I$ to be either empty (when $\xi = 0$) or a countably infinite set. Indeed, by a norm argument the set $I$ must be countable. Further, if $I$ is a finite set, then for $i \in I$ there is a large enough $n$ such that $p^{(i)}_n \neq p^{(j)}_n$ for all $j \in I, j \neq i$. Then it follows that $p^{(i)}_n\xi$ is contained in a truncation of the ray $p^{(i)}$ (note $p^{(i)}_n\xi$ is non-zero because $\xi$ is partially contained in $p^{(i)}$) which contradicts the assumption $p^{(i)}_n\xi \in \fV^\perp$. Therefore, $I$ must be countably infinite.
	\end{enumerate}
\end{remark}

The following proposition proves some important properties that will be required later and provides a more convenient presentation of each of the above subspaces by considering them as subspaces of the larger Hilbert space $\scrH$.

\begin{proposition} \label{alt subspace presentation prop}
	Let $p$ be a ray and denote $\mathfrak{S} \subset \fH$ to be the closed subspace formed by the intersection of the subspaces $\phi(v)^{-1}(\fV^\perp)$ for all $v \in \Mon(a,b)$. We then have the following assertions.  
	\begin{enumerate}[i]
		\item \label{item fX ran def} $\fX^p = \Ran(\rho_p) \cap \fH$.
		\item \label{item fX unitary operator} Suppose $\ker(A)$, $\ker(B)$ are both finite-dimensional and $p$ is periodic with period of length $n$. Then $\fX^p$ is finite-dimensional and $\phi(p_n)$ restricts to a unitary operator from $\fX^p$ to itself.
		\item \label{item fVp alt def 1} A vector $\xi \in \fH$ belongs in $\fV^p$ if and only if there exists a finite set of vectors $\{\eta_k\}_{k=1}^m$ in $\fH$ eventually contained in $p$ and a finite set of disjoint vertices $\{\nu_k\}_{k=1}^m$ such that $\xi = \sum_{k=1}^m \tau^*_{\nu_k}(\eta_k)$ (see Subsection \ref{subsec:F-def} for definition of disjoint vertices). 
		\item \label{item fVp alt def 3} The subspace $\fV^p$ is equal to the set of all finite sum of elements belonging in $\cup_{q\in[p]} \Ran(\rho_q)$ such that the sum is contained in $\fH$.
		\item \label{item fV alt def} Items \ref{item fVp alt def 1} and \ref{item fVp alt def 3} hold for $\fV$ when considering all rays rather than rays in the class $[p]$.
		\item \label{item fW alt def 1} A non-zero vector $\xi \in \fH$ belongs in $\fW$ if and only if $\tau_\nu(\xi) \in \fV^\perp$ for all $\nu \in \Ver$ and $\xi$ is an infinite (converging) sum of orthogonal non-zero vectors $\sum_{i \in I} z_i$ where each vector $z_i \in \Ran(\rho_{p^{(i)}})$ for a set of rays $\{p^{(i)}\}_{i \in I}$. In particular, $z_i$ can be taken to be $\rho_{p^{(i)}}(\xi)$.
		\item \label{item fW alt def 2} The subspace $\fW$ is equal to the following expression:
		\[\fW = \biggl(\bigoplus_{p} \Ran(\rho_p)\biggr) \cap \mathfrak{S}\]
		where $p$ ranges over all rays. 
		\item \label{item fU alt def} The space $\fU$ is a closed subspace of $\fH$ and is equal to the following expression:
		\[\fU = \biggl(\bigcap_p \ker(\rho_p) \biggr) \cap \fH = \biggl(\bigoplus_p \Ran(\rho_p) \biggr)^\perp \cap \fH\]
		where $p$ runs across all rays.
		\item \label{item fU contained uncountable ray} If $\xi \in \fU$ is a non-zero vector, then there does not exist a countable set of rays $\{p^{(i)}\}_{i \in I}$ such that $\xi \in \Ran(\bigvee_{i \in I} \rho_{p^{(i)}})$.	
		\item \label{item phi invariant subspace} The subspaces $\fU, \fV^p, \fV, \fW$ are $\phi$-invariant. 
	\end{enumerate}
\end{proposition}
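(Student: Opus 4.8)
My plan rests on the dictionary between the monoid action on $\fH$ and the partial isometries on $\scrH$: for $\xi\in\fH$ (identified with $[e,\xi]$) and a vertex $\nu$ one has $\tau_\nu\xi=\phi(\nu)\xi$, and since $\rho_\nu=\tau_\nu^*\tau_\nu$ is a projection, $\norm{\rho_\nu\xi}=\norm{\tau_\nu\xi}=\norm{\phi(\nu)\xi}$. Feeding in $\rho_{p,n}\xrightarrow{s}\rho_p$ (Proposition \ref{MCT projection prop}) yields the master formula $\lim_n\norm{p_n\xi}=\norm{\rho_p\xi}$, from which the ``analytic'' items fall out quickly. For (i): $\norm{p_n\xi}=\norm{\xi}$ for all $n$ forces $\rho_{p,n}\xi=\xi$, hence $\xi\in\bigcap_n\Ran(\rho_{p,n})=\Ran(\rho_p)$, and conversely — this is essentially Observation \ref{contained vector norm obs} and shows $\fX^p$ is closed. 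For (viii): $\lim_n\norm{p_n\xi}=\norm{\rho_p\xi}=0$ iff $\xi\in\ker(\rho_p)$, so $\fU=\bigl(\bigcap_p\ker(\rho_p)\bigr)\cap\fH=\bigl(\bigoplus_p\Ran(\rho_p)\bigr)^\perp\cap\fH$, manifestly closed; then (ix) is one line, since $\Ran(\bigvee_i\rho_{p^{(i)}})\subseteq\bigoplus_p\Ran(\rho_p)$ is orthogonal to $\fU$ and so meets it only in $0$. For (ii) I would invoke Remark \ref{subspace of fH remark}(i): if $p$ starts with $a$ then $\fX^p\subseteq\ker(B)$ (symmetrically for $b$), so finite-dimensional kernels force $\dim\fX^p<\infty$; periodicity ${}_np=p$ makes $\phi(p_n)$ map $\fX^p$ into itself while the containment condition makes it norm-preserving, hence a unitary of that finite-dimensional space.

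The engine for (iii)--(v) is a structural description of $\Ran(\rho_q)$: every $z\in\Ran(\rho_q)$ equals $\tau^*_{\mu_n^q}(\zeta_n)$ with $\zeta_n:=\tau_{\mu_n^q}(z)\in\fH$, and $\norm{\zeta_{n+k}}=\norm{z}=\norm{\zeta_n}$ shows $\zeta_n$ is contained in ${}_nq$, i.e.\ $\zeta_n\in\fX^{{}_nq}$ is eventually contained in $q$. For the forward direction of (iii), the constancy of each summand in $\norm{\xi}^2=\sum_k\norm{p^{(k)}_n\xi}^2$ (Remark \ref{subspace of fH remark}(iii)) makes $\eta_k:=p^{(k)}_N\xi$ contained in ${}_Np^{(k)}\in[p]$; choosing $N$ large the length-$N$ vertices $\nu_k:=\mu^{p^{(k)}}_N$ become distinct, hence disjoint, and $\sum_k\norm{\rho_{\nu_k}\xi}^2=\norm{\xi}^2$ forces $\xi=\sum_k\rho_{\nu_k}\xi=\sum_k\tau^*_{\nu_k}(\eta_k)$; the converse is a direct norm computation using disjointness. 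Items (iv) and (v) then follow by feeding the $\Ran(\rho_q)$-description into (iii) — grouping repeated rays and passing to a common large depth rewrites any finite sum $\sum_k z_k$, $z_k\in\Ran(\rho_{q^{(k)}})$, into the shape of (iii) and back — with (v) the verbatim statement over all rays rather than $[p]$.

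The subspace $\fW$ is the crux. For (vi), the defining condition $\phi(w)\xi=0$ off the prefixes of $\{p^{(i)}\}$ says exactly $\rho_\nu\xi=0$ for every length-$n$ vertex $\nu$ that is not such a prefix; testing against $\sum_{|\nu|=n}\rho_\nu=\id$ yields $\xi=\sum_i\rho_{\mu^{p^{(i)}}_n}\xi$ for all $n$. Letting $n\to\infty$ and using that the associated ``cut'' projections decrease in the SOT should give $\xi=\sum_i\rho_{p^{(i)}}\xi=\sum_iz_i$, an orthogonal sum with $z_i=\rho_{p^{(i)}}\xi\neq0$ (nonzero because $\norm{z_i}=\lim_n\norm{p^{(i)}_n\xi}>0$ by partial containment). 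For the converse the one delicate point is recovering \emph{partial} rather than full containment: were some $p^{(i)}_m\xi$ contained in ${}_mp^{(i)}$ it would lie in $\fX^{{}_mp^{(i)}}\subseteq\fV$, yet it equals $\tau_{\mu^{p^{(i)}}_m}\xi\in\fV^\perp$, hence would vanish, contradicting $\norm{z_i}>0$. Item (vii) is then a reformulation: ``$\tau_\nu\xi\in\fV^\perp$ for all $\nu$'' is literally ``$\xi\in\mathfrak{S}$'' (as $\tau_\nu\xi=\phi(\nu)\xi$ and $\mathfrak{S}=\bigcap_v\phi(v)^{-1}(\fV^\perp)$), while the orthogonal-sum condition is exactly ``$\xi\in\bigoplus_p\Ran(\rho_p)$''. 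I expect the genuine obstacle to be the convergence step in (vi): showing that the depth-$n$ cut masses converge to the atomic masses, i.e.\ that no mass escapes to accumulation rays so that the boundary measure determined by $\nu\mapsto\norm{\rho_\nu\xi}^2$ is purely atomic — this is where the countability of $\{p^{(i)}\}$, the off-ray vanishing, and the strong $\fV^\perp$-orthogonality must be combined, and it is the one genuinely infinite-dimensional point of the proposition.

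Finally, (x) follows from the characterizations just obtained together with $A\xi=\tau_0\xi$ and $B\xi=\tau_1\xi$ for $\xi\in\fH$. For $\fU$ it is cleanest to use $\fU=\{\xi:\limw w\xi=0\}$: the words ending in $a$ form a subset of $W$, so $\limw w\xi=0$ gives $\limw w(A\xi)=\limw\phi(wa)\xi=0$, and likewise for $B$. For $\fV^p$ and $\fV$ I would apply (iv)/(v): since $\tau_0$ carries $\Ran(\rho_q)$ into $\Ran(\rho_{{}_1q})$ when $q$ begins with $a$ and to $0$ otherwise, with ${}_1q\in[q]$, the vector $A\xi=\tau_0\sum_k z_k=\sum_k\tau_0z_k$ is again a finite sum of elements of $\bigcup_{q'}\Ran(\rho_{q'})$ lying in $\fH$, hence back in $\fV^p$ (resp.\ $\fV$); symmetrically for $B=\tau_1$. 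For $\fW$ I would use (vii): the same computation keeps $A\xi\in\bigoplus_p\Ran(\rho_p)$, while $\phi(v)(A\xi)=\phi(va)\xi\in\fV^\perp$ shows $A\xi\in\mathfrak{S}$, so $A\xi\in\fW$. This disposes of all ten assertions.
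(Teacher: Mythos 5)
Your proposal tracks the paper's own proof almost clause for clause on items (i)--(v) and (vii)--(x): the same dictionary $\tau_\nu\xi=\phi(\nu)\xi$ and $\norm{p_n\xi}=\norm{\rho_{p,n}\xi}$, the same treatment of (ii) via $\fX^p\subset\ker(A)$ or $\ker(B)$ plus norm-preservation on a finite-dimensional space, the same choice $\eta_k=p^{(k)}_N\xi$ in (iii) with (iv)--(v) as corollaries, the same contradiction in the converse of (vi) (an attained norm would put $p^{(i)}_m\xi$ in $\fV$, against $\tau_\nu(\xi)\in\fV^\perp$), and (viii)--(ix) exactly as in the paper. Your handling of the $\fW$ part of (x) through the characterisation (vii) is a legitimate, slightly cleaner variant of the paper's argument, which instead checks directly that the surviving set of rays is empty or countably infinite; note, though, that this makes the invariance of $\fW$ depend on (vi)--(vii), so it stands or falls with them.

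The one genuine gap is exactly where you flag it: the forward direction of (vi). You derive the depth-$n$ identity (with a small slip: $\xi=\sum_i\rho_{\mu^{p^{(i)}}_n}\xi$ double counts whenever two rays share their length-$n$ prefix, so the sum must run over the set $S_n$ of \emph{distinct} depth-$n$ vertices on the rays), and then say that letting $n\to\infty$ ``should give'' $\xi=\sum_i\rho_{p^{(i)}}\xi$, explicitly deferring this as ``the genuine obstacle''. A proof cannot defer its crux. The paper closes the step as follows: since distinct rays yield mutually orthogonal projections, Proposition \ref{MCT projection prop} identifies the join $\rho_I=\bigvee_{i\in I}\rho_{p^{(i)}}$ with the SOT-limit of the finite partial sums $\sum_i\rho_{p^{(i)}}$; the vanishing condition $\phi(w)\xi=0$ off the prefixes gives $\xi\in\bigvee_{i\in I}\Ran(\rho_{p^{(i)},n})$ for every $n$, and combining this with $\rho_{p^{(i)}}=\bigwedge_n\rho_{p^{(i)},n}$ the paper concludes $\xi\in\Ran(\rho_I)$, hence $\xi=\rho_I(\xi)=\sum_{i\in I}\rho_{p^{(i)}}(\xi)$, each summand nonzero because partial containment forces $\norm{\rho_{p^{(i)}}\xi}=\lim_n\norm{p^{(i)}_n\xi}>0$. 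Your worry about mass escaping to accumulation rays is precisely what this passage from the decreasing cut projections $P_n=\sum_{\nu\in S_n}\rho_\nu$ to $\rho_I$ must rule out, and your write-up supplies no argument for it; until that limit identity is established, item (vi), and with it (vii) and your version of the $\fW$-invariance in (x), remain unsupported.
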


\begin{proof}
	Item \ref{item fX ran def} is trivial; item \ref{item fVp alt def 3} follows from item \ref{item fVp alt def 1}; item \ref{item fV alt def} is immediate once items \ref{item fVp alt def 1} and \ref{item fVp alt def 3} have been proven; item \ref{item fW alt def 2} can be deduced from item \ref{item fW alt def 1} and item iii from Remark \ref{subspace of fH remark}; and item \ref{item fU alt def} is implied by Corollary \ref{zero projection corollary}. Therefore, we only need to prove items \ref{item fX unitary operator}, \ref{item fVp alt def 1}, \ref{item fW alt def 1}, \ref{item fU contained uncountable ray} and \ref{item phi invariant subspace}. 
	
	Proof of \ref{item fX unitary operator}.~ Since $p=c^\infty$ for some word $c$ of length $n$ we deduce that $\phi(p_n)(\fX^p)\subset \fX^p$ and moreover the restriction is norm-preserving. By item i in Remark \ref{subspace of fH remark}, $\fX^p$ is either contained inside $\ker(A)$ or in $\ker(B)$ implying by assumption that $\fX^p$ finite-dimensional. Therefore, $\phi(p_n)$ restricts to a unitary operator from $\fX^p$ to itself. 
	
	Proof of \ref{item fVp alt def 1}.~ Let $\xi \in \fV^p$, then the result follows from item ii in Remark \ref{subspace of fH remark} by taking $\eta_k$ to be $p^{(k)}_N\xi$ and $\nu_k$ to be the vertex contained in $p^{(k)}$ with length $N$.\\
	Then suppose $\xi \in \fH$ is a vector such that $\xi = \sum_{k=1}^m \tau^*_{\nu_k}(\eta_k)$ as described in item \ref{item fVp alt def 1}. Each of the vectors $\tau^*_{\nu_k}(\eta_k)$ belong in the range of $\rho_{p^{(k)}}$ for some ray $p^{(k)}$ in the class of $p$. Take $N$ to be the maximum length of the vertices $\{\nu_k\}_{k=1}^m$. Then it is clear that the vectors $p^{(k)}_N\xi$ are contained in a truncation of $p^{(k)}$ and $w\xi = 0$ if $w \notin \cup_{n\geq N} \{p^{(k)}_n\}_{k=1}^m$. From this we can conclude $\norm{\xi}^2 = \sum_{k=1}^m \norm{p^{(k)}_n\xi}^2$ for all $n \geq N$ which shows that $\xi \in \fV^p$ and proves the result.
		
	Proof of \ref{item fW alt def 1}.~ First, note the condition $\phi(v)\xi \in \fV^\perp$ for all $v \in \Mon(a,b)$ is equivalent to the condition $\tau_\nu(\xi) \in \fV^\perp$ for all $\nu \in \Ver$. Suppose $\xi$ is a non-zero vector in $\fW$ and take $\{p^{(i)}\}_{i \in I}$ to be the set of rays as described in the definition of $\fW$ in Definition \ref{subspace of fH defintiion}. The set $I$ is countably infinite by item iii in Remark \ref{subspace of fH remark}. It is clear $\{\rho_{p^{(i)}}\}$ forms an orthogonal set of projections. Hence, by Proposition \ref{MCT projection prop} the sum $\sum_i \rho_{p^{(i)}}$ converges in the strong operator topology to the projection $\rho_I$ associated to the closed subspace $\bigvee_i \Ran(\rho_{p^{(i)}})$. By definition of $\fW$, for all $n \in \N$ we have
	\[\xi \in \bigvee_{i \in I} \Ran(\rho_{p^{(i)}, n})\]
	where recall $\rho_{p^{(i)}, n} = \rho_{\nu}$ and $\nu$ is the vertex in $p^{(i)}$ with length $n$. Since the above holds for all $n \in \N$ and $\rho_{p^{(i)}} = \bigwedge_n \rho_{p^{(i)},n}$ it follows that
	\[\xi \in \bigvee_{i \in I} \Ran(\rho_{p^{(i)}}) = \Ran(\rho_I).\]
	Thus we can write
	\[\xi = \rho_I(\xi) = \sum_{i \in I} \rho_{p^{(i)}}(\xi).\]
	Since $\xi$ is partially contained in each of the rays $p^{(i)}$, the terms $\rho_{p^{(i)}}(\xi)$ are orthogonal non-zero vectors in $\Ran(\rho_{p^{(i)}})$ which proves one direction of the result. 
	
	Next we consider the reverse direction. Suppose $\xi = \sum_{i \in I} z_i$ where $z_i$ are orthogonal non-zero vectors in $\Ran(\rho_{p^{(i)}})$ and assume without loss of generality that $p^{(i)} \neq p^{(j)}$ for $i \neq j$. Since $z_i$ is non-zero, $\lim_{n\to\infty}\norm{p^{(i)}_n\xi} = \norm{z_i} > 0$. If there exists $n \in \N$ such that $\norm{p^{(i)}_n(\xi)} = \norm{z_i}$ that would imply $p^{(i)}_n(\xi) = \tau_{p^{(i)}, n}(\xi)$ is contained in a truncation of the ray $p^{(i)}$ which contradicts the assumption $\tau_\nu(\xi) \in \fV^\perp$ for all $\nu \in \Ver$. Therefore, we can conclude $\xi$ is partially contained in each of the rays $p^{(i)}$. Further, it follows from a norm argument that $\phi(w)\xi = 0$ for all $w \in \cup_{i \in I}\{p^{(i)}_n\}_{n \in \N}$ which shows that $\xi \in \fW$ as required. 
	
	Proof of \ref{item fU contained uncountable ray}.~ Suppose such a set of rays exist. Applying Proposition \ref{MCT projection prop} we obtain the orthogonal sum $\xi = \sum_i \rho_{p^{(i)}}(\xi)$. This gives
	\[\norm{\xi}^2 = \sum_i \norm{\rho_{p^{(i)}}(\xi)}^2 = 0\]
	because $\xi$ belongs in the kernel of $\rho_p$ for all rays $p$. This is of course a contradiction and shows that no such countable set of rays can exist. 
	
	Proof of \ref{item phi invariant subspace}.~ It is easy to conclude $\fU$ is $\phi$-invariant by observing $p_nA\xi = p_n'\xi$ where $p' = p\cdot a$ and similarly for $B\xi$. \\
	Now let $\xi \in \fV^p$ and using the presentation of $\fV^p$ from item \ref{item fVp alt def 3}, suppose $\xi$ is a sum of vectors which are contained in a finite set of rays $R \subset [p]$. Then $A\xi$ will be a sum of vectors which are contained in a subset of $R$ consisting of all the rays that begin with a left turn and likewise for $B\xi$. This shows that $\fV^p$ is a $\phi$-invariant subspace and we arrive at the same conclusion for $\fV$ by following a similar reasoning. \\ 
	Finally, let $\xi \in \fW$ be non-zero and by item \ref{item fW alt def 2} suppose $\xi$ is now a sum of vectors contained in a countably infinite set of rays $R$. Again, $A\xi$ will be contained in the subset $R' \subset R$ consisting of all the rays that begin with a left turn. Note $R'$ must either be empty (thus $A\xi = 0$) or be countably infinite because otherwise that would imply $A\xi \in \fV$ which contradicts the fact $A\xi$ is a non-zero vector in $\fV^\perp$. This shows $A\xi \in \fW$ and we can conclude similarly for $B\xi$. This proves $\fW$ is a $\phi$-invariant subspace which completes the proof of the proposition.
\end{proof}

\begin{remark} \label{alt subspace presentation remark}
	\begin{enumerate}[i]
		\item While it is not obvious from Definition \ref{subspace of fH defintiion}, items \ref{item fVp alt def 3}, \ref{item fV alt def}, \ref{item fW alt def 2}, \ref{item fU alt def} show that $\fU, \fV^p, \fV, \fW$ are indeed vector subspaces of $\fH$.  However, note that $\fX^p$ is not necessarily a $\phi$-invariant subspace because $p_1\xi$ will be contained in the ray ${}_1p$ which does not necessarily equal $p$ (this will only hold if $p$ is the ray $\dots aaa$ or $\dots bbb$).  
		\item Informally, items \ref{item fX ran def}, \ref{item fV alt def}, \ref{item fW alt def 1}, \ref{item fU contained uncountable ray} show that $\fX^p$ is the space of all vectors contained in a ray $p$, $\fV$ is the space of all vectors contained in a finite set of rays, $\fW$ is the space of all vectors contained in a countably infinite set of rays while $\fU$ is the space of all vectors contained in an uncountably infinite set of rays. 
	\end{enumerate}
\end{remark}

Having provided some alternate descriptions of the subspace $\fU, \fV, \fW$, we can now easily show that these subspaces are orthogonal to each other.

\begin{proposition} \label{subspace fH orthogonal prop}
	\begin{enumerate}[i]
		\item \label{item fX orthogonal} If $p,q$ are non-identical rays then $\fX^p$ is orthogonal to $\fX^q$.
		\item \label{item fVp orthogonal} If $p,q$ are rays belonging to different equivalence classes then $\fV^p$ is orthogonal to $\fV^q$.
		\item \label{item subspace fH orthogonal} The subspaces $\fU, \fV, \fW$ are orthogonal to each other.
	\end{enumerate}
\end{proposition}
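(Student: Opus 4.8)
The plan is to reduce all three orthogonality statements to a single elementary fact about the projections $\rho_p$: \emph{if $p\neq q$ are distinct rays, then $\Ran(\rho_p)$ and $\Ran(\rho_q)$ are orthogonal subspaces of $\scrH$}. To establish this, I would note that two distinct rays first diverge at some finite level $n$, so that the vertices $\mu_n^p$ and $\mu_n^q$ are disjoint. Disjoint vertices can be completed to a common sdp, and since the projections attached to an sdp sum to $\id$ they are mutually orthogonal; hence $\rho_{\mu_n^p}\rho_{\mu_n^q}=0$. Because $\rho_p\leq\rho_{\mu_n^p}$ and $\rho_q\leq\rho_{\mu_n^q}$ (these are the defining decreasing nets), this forces $\Ran(\rho_p)\perp\Ran(\rho_q)$. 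Everything else is bookkeeping against the alternative presentations established in Proposition \ref{alt subspace presentation prop}.

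For item \ref{item fX orthogonal}, I would simply invoke item \ref{item fX ran def}: since $\fX^p=\Ran(\rho_p)\cap\fH$ and $\fX^q=\Ran(\rho_q)\cap\fH$, orthogonality of the ambient ranges gives $\fX^p\perp\fX^q$ at once. For item \ref{item fVp orthogonal}, I would use the presentation of item \ref{item fVp alt def 3}: an arbitrary $\xi\in\fV^p$ is a finite sum $\sum_k z_k$ with $z_k\in\Ran(\rho_{p^{(k)}})$ and $p^{(k)}\in[p]$, and likewise $\eta\in\fV^q$ is $\sum_l w_l$ with $w_l\in\Ran(\rho_{q^{(l)}})$ and $q^{(l)}\in[q]$. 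Since $[p]\neq[q]$, every pair $p^{(k)},q^{(l)}$ consists of distinct rays, so the key fact gives $\langle z_k,w_l\rangle=0$ for all $k,l$, and summing yields $\langle\xi,\eta\rangle=0$.

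For item \ref{item subspace fH orthogonal}, I would read off the relevant inclusions from the same proposition. Item \ref{item fU alt def} gives $\fU=\bigl(\bigoplus_p\Ran(\rho_p)\bigr)^\perp\cap\fH$, while items \ref{item fV alt def} and \ref{item fW alt def 2} exhibit $\fV$ and $\fW$ as subspaces of $\bigoplus_p\Ran(\rho_p)$; hence $\fU\perp\fV$ and $\fU\perp\fW$. Finally $\fV\perp\fW$ holds because the defining condition of $\fW$ in Definition \ref{subspace of fH defintiion}, applied with $v$ the empty word, forces $\fW\subset\fV^\perp$, as already observed in Remark \ref{subspace of fH remark}(iv). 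The only point demanding care is the bookkeeping in item \ref{item fVp orthogonal}: the $z_k,w_l$ need not individually lie in $\fH$ even though their sums do, but since the $\fH$-inner product is the restriction of the $\scrH$-inner product, the term-by-term orthogonality computed in $\scrH$ is exactly what is required. I do not anticipate a genuine obstacle here — once the disjoint-vertex orthogonality of the $\rho$'s is in hand, all three parts are immediate.
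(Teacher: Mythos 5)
Your proposal is correct and follows essentially the same route as the paper: both arguments reduce everything to the orthogonality of $\Ran(\rho_p)$ and $\Ran(\rho_q)$ for distinct rays $p\neq q$ (via the disjoint vertices at the first level where the rays diverge, using $\Ran(\rho_p)\subset\Ran(\rho_{p,n})\perp\Ran(\rho_{q,n})\supset\Ran(\rho_q)$), and then invoke the alternative presentations of $\fX^p,\fV^p,\fV,\fW,\fU$ from Proposition \ref{alt subspace presentation prop} for the remaining bookkeeping. Your closing remark that the summands $z_k,w_l$ live in $\scrH$ rather than $\fH$, so the orthogonality must be computed in the ambient inner product, is a point the paper leaves implicit, and you resolve it correctly.
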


\begin{proof}
	Proof of i.~ Since $\fX^p = \Ran(\rho_p) \cap \fH$, it is sufficient to show that $\Ran(\rho_p), \Ran(\rho_q)$ are orthogonal subspaces if $p \neq q$. If $p \neq q$, then there exists $n \in \N$ such that the $n$th vertices of the two rays are different and thus disjoint. The result can then be concluded by the following:
	\[\Ran(\rho_p) \subset \Ran(\rho_{p,n}) \perp  \Ran(\rho_{q,n}) \supset \Ran(\rho_q).\] 
	Proof of ii.~ This immediately follows from item \ref{item fVp alt def 3} in Proposition \ref{alt subspace presentation prop} and the fact $\Ran(\rho_r), \Ran(\rho_s)$ are orthogonal if the rays $r,s$ are not equal as shown above. \\
	Proof of iii.~ The result immediately follows from items \ref{item fVp alt def 3}, \ref{item fW alt def 2}, \ref{item fU alt def} in Proposition \ref{alt subspace presentation prop}.
\end{proof}

\begin{remark} \label{subspace fH orthogonal remark}
	\begin{enumerate}[i]
	\item Item \ref{item fX orthogonal} shows that if $\xi$ is contained in $p$ then $\{p_i\xi\}_{i=1}^n$ (or $\{p_i\xi\}_{i \in \N^*}$ if $p$ is not eventually periodic) forms an orthogonal set in $\fH$ where $n$ is the length of the period of $p$. Recalling $p_i\xi \in \ker(\phi(\ti p_{i+1}))$ from item i in Remark \ref{subspace of fH remark}, it follows that $p$ must be eventually periodic if $\fX^p$ is a non-trivial subspace and either $\ker(A)$ or $\ker(B)$ is finite-dimensional.
	\item From the above observation, it follows in general most of the subspaces $\fV^p$ will be trivial. In particular, if $A$ and $B$ are injective or $\norm{A}, \norm{B} < 1$ then $\fV^p$ will be trivial for all rays $p$. 
	\item From the proposition we can form the direct sum $\oplus_{p \in \cP} \fV^p$ which is clearly a subspace of $\fV$. However, we do not necessarily have equality as shown by Example \ref{pythag ex 1}.
	\end{enumerate}
\end{remark}

Motivated by the description of vectors in $\fV, \fW$ given by item ii in Remark \ref{alt subspace presentation remark} and by Construction \ref{subspace of scrH from fH construction}, we introduce the following natural terminology.

\begin{definition} \label{atomic definition}
	Let $(A,B)$ be a Pythagorean pair on a Hilbert space $\fH$ with associated representation $(\sigma,\scrH)$ \tb{such that the orthogonal complement $(\fU \oplus \fV \oplus \fW)^\perp$ in $\fH$ is finite-dimensional}. Consider $\sigma_\fU$ and $\sigma_{\fV\oplus\fW}$ the sub-representations generated by $\fU$ and $\fV\oplus\fW$.
	We say that $\sigma_\fU$ and $\sigma_{\fV\oplus\fW}$ are the diffuse and atomic parts of $\sigma$, respectively.
Moreover, we call $(A,B)$ and $\sigma$ diffuse (resp.~atomic) when $\fH=\fU$ (resp.~$\fH=\fV\oplus\fW$).
\end{definition}	

\begin{remark}
\tb{The condition that $(\fU \oplus \fV \oplus \fW)^\perp$ is finite-dimensional is a technical requirement to ensure that $\sigma = \sigma_\fU \oplus \sigma_{\fV \oplus \fW}$ as will be shown in Theorem \ref{complete scrH decomposition theorem}.}
\end{remark}

\subsection{Decomposition of the Jones representation using the monoid action.}

Recall that $\fH$ is acted upon by the free monoid in two generators given by the map 
$$\phi:\Mon(a,b)\to B(\fH),\ a\mapsto A, b\mapsto B.$$

Using Proposition \ref{subrep of sigma from fH proposition} it is obvious that a (orthogonal) decomposition of the monoid action $\phi$ on $\fH$ will induce a decomposition of the Pythagorean representation $\sigma$ on the larger Hilbert space $\scrH$. However, monoid actions do not have an orthogonal decomposition in general. Therefore, a question of particular interest is whether the specific monoid action $\phi$ is decomposable. Examples \ref{pythag ex 1} and \ref{pythag decomp example} provide a negative answer to this question. However, the next result shows that $\phi$ is ``weakly'' decomposable in a certain sense which surprisingly will be sufficient to yield a decomposition of $\scrH$. 

\begin{proposition} \label{pythag invariant decomp prop}
Let $(A,B)$ be a Pythagorean pair of operators acting on $\fH$ and $(\sigma, \scrH)$ be the associated Pythagorean representation. Then the following assertions are true.
	\begin{enumerate}[i]
		\item \label{item complete decompose criteria} Suppose $\fX \subset \fH$ is a $\phi$-invariant subspace \tb{and $\fX^\perp$ is finite-dimensional}. If $\fX^\perp$ does not contain any non-trivial $\phi$-invariant subspaces then $\scrH_\fX = \scrH$ and $\sigma_\fX = \sigma$. Conversely, if $\fX^\perp$ does contain a non-trivial $\phi$-invariant subspace then $\scrH_\fX \neq \scrH$ and $\sigma_\fX \neq \sigma$.
		\item \label{item fH weakly decompose} If $\fX \subset \fH$ is a $\phi$-invariant subspace \tb{and $\fX^\perp$ is finite-dimensional}, then there exists an orthogonal $\phi$-invariant subspace $\fY \subset \fX^\perp$ such that
		\[\scrH = \scrH_\fX \oplus \scrH_\fY, \quad \sigma = \sigma_\fX \oplus \sigma_\fY.\]
		\item \label{item fH completely reducible} 
		If $\fH$ is finite-dimensional then there exists a $\phi$-invariant subspace $\fX \subset \fH$ such that $(\scrH, \sigma) \cong (\scrH_{A\restriction_\fX, B\restriction_\fX}, \sigma_{A\restriction_\fX, B\restriction_\fX})$. In addition, $\fX$ has a decomposition into orthogonal $\phi$-invariant subspaces $\{\fX_i\}_{i \in I}$ such that
		\[\scrH = \oplus_{i \in I} \scrH_{\fX_i} \cong \oplus_{i \in I} \scrH_{A\restriction_{\fX_i}, B\restriction_{\fX_i}}, \quad \sigma = \oplus_{i \in I} \sigma_{\fX_i} \cong \oplus_{i \in I} \sigma_{A\restriction_{\fX_i}, B\restriction_{\fX_i}}\]
		where each subspace $\fX_i$ does not contain any proper $\phi$-invariant subspace.
	\end{enumerate}
\end{proposition}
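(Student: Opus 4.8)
The plan is to prove item \ref{item complete decompose criteria} first (the analytic heart) and then deduce items \ref{item fH weakly decompose} and \ref{item fH completely reducible} by soft arguments. Throughout write $Q$ for the orthogonal projection of $\fH$ onto $\fX^\perp$ and $P=\id-Q$. Since $\fX$ is $\phi$-invariant we have $QAP=QBP=0$, so the compressions $C:=QAQ$ and $D:=QBQ$ satisfy $C^*C+D^*D\le\id_{\fX^\perp}$ and, for every word $w\in\Mon(a,b)$ and every $\eta\in\fH$, $Q\phi(w)\eta=w_{C,D}(Q\eta)$, where $w_{C,D}$ denotes $w$ evaluated in $C,D$. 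To prove the forward direction of \ref{item complete decompose criteria} I would first reduce $\scrH_\fX=\scrH$ to the single claim that $[e,\eta]\in\scrH_\fX$ for all $\eta\in\fH$: indeed $\scrH_\fX$ is invariant under the isometries $\tau_\nu^*$ (they send $\fX$-decorated trees to $\fX$-decorated trees, the extra leaves carrying $0\in\fX$), and any $[t,\xi]$ is the finite sum $\sum_\ell\tau_{\nu_\ell}^*[e,\xi_\ell]$ over the leaves of $t$. Now using the representative $[e,\eta]=[t_n,(\phi(w)\eta)_{|w|=n}]$ and splitting each decoration as $P\phi(w)\eta+Q\phi(w)\eta$, the $\fX$-part lies in $\scrH_\fX$, so $\mathrm{dist}([e,\eta],\scrH_\fX)^2\le\sum_{|w|=n}\norm{Q\phi(w)\eta}^2=\langle Q\eta,M_nQ\eta\rangle$, where $M_n:=\sum_{|w|=n}(w_{C,D})^*w_{C,D}=\Psi^n(\id_{\fX^\perp})$ and $\Psi(X):=C^*XC+D^*XD$. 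Everything thus reduces to showing $M_n\to 0$.

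The sequence $(M_n)$ is positive and decreasing ($M_1=\Psi(\id)\le\id$ since $C^*C+D^*D\le\id$, and $\Psi$ is positive), so in the finite-dimensional algebra $B(\fX^\perp)$ it converges to a fixed point $M_\infty=\Psi(M_\infty)$. \emph{The main obstacle is to show $M_\infty=0$}, and this is exactly where the hypothesis that $\fX^\perp$ contains no non-trivial $\phi$-invariant subspace enters. I would argue by contradiction: if $\lambda:=\norm{M_\infty}>0$, pick a unit vector $\zeta$ in the top eigenspace $\fY_\lambda:=\ker(\lambda\id-M_\infty)$. The fixed-point equation gives $\lambda=\langle C\zeta,M_\infty C\zeta\rangle+\langle D\zeta,M_\infty D\zeta\rangle\le\lambda(\norm{C\zeta}^2+\norm{D\zeta}^2)\le\lambda$, forcing equality throughout; hence $\norm{C\zeta}^2+\norm{D\zeta}^2=1$ and, since $M_\infty\le\lambda\id$, both $C\zeta,D\zeta\in\fY_\lambda$. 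Comparing $\norm{C\zeta}^2+\norm{D\zeta}^2=1$ with the Pythagorean identity $\norm{A\zeta}^2+\norm{B\zeta}^2=\norm{\zeta}^2$ yields $PA\zeta=PB\zeta=0$, i.e. $A\zeta=C\zeta$ and $B\zeta=D\zeta$ lie in $\fX^\perp$. Therefore $\fY_\lambda$ is a non-zero $\phi$-invariant subspace of $\fX^\perp$, a contradiction. So $M_\infty=0$, whence $M_n\to0$, $[e,\eta]\in\scrH_\fX$ for all $\eta$, and $\scrH_\fX=\scrH$. The converse is easy: if $\fY\subseteq\fX^\perp$ is a non-zero $\phi$-invariant subspace, then $\scrH_\fY\neq0$ and $\scrH_\fY\perp\scrH_\fX$ (refine two trees to a common one; $\fX$- and $\fY$-decorations stay orthogonal componentwise because $\fX\perp\fY$ and both are $\phi$-invariant), so $\scrH_\fX\neq\scrH$.

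For item \ref{item fH weakly decompose} I would take $\fY:=\{\zeta\in\fX^\perp:\phi(w)\zeta\in\fX^\perp\text{ for all }w\in\Mon(a,b)\}$, the maximal $\phi$-invariant subspace contained in $\fX^\perp$; it is manifestly $\phi$-invariant and contained in the finite-dimensional $\fX^\perp$. Then $\fX\oplus\fY$ is $\phi$-invariant and its orthogonal complement lies in $\fX^\perp$ and contains no non-zero $\phi$-invariant subspace (such a subspace would be contained in $\fY$ by maximality, yet orthogonal to $\fY$). By item \ref{item complete decompose criteria}, $\scrH_{\fX\oplus\fY}=\scrH$; and since $\fX,\fY$ are orthogonal $\phi$-invariant subspaces, $\scrH_{\fX\oplus\fY}=\scrH_\fX\oplus\scrH_\fY$ (decorations split linearly at each fixed tree and remain orthogonal), giving $\scrH=\scrH_\fX\oplus\scrH_\fY$ and $\sigma=\sigma_\fX\oplus\sigma_\fY$.

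For item \ref{item fH completely reducible} I would induct on $\dim\fH$. If $\fH\neq0$, choose a minimal non-zero $\phi$-invariant subspace $\fX_1$ (it exists by finite-dimensionality) and apply item \ref{item fH weakly decompose} to obtain the maximal $\phi$-invariant $\fY_1\subseteq\fX_1^\perp$ with $\scrH=\scrH_{\fX_1}\oplus\scrH_{\fY_1}$. The restriction $(A\restriction_{\fY_1},B\restriction_{\fY_1})$ is a Pythagorean pair on the strictly smaller space $\fY_1$, and by Proposition \ref{subrep of sigma from fH proposition} its Jones space is $\scrH_{\fY_1}$; so the induction hypothesis provides an orthogonal sum of irreducible $\phi$-invariant subspaces $\bigoplus_{i\ge2}\fX_i\subseteq\fY_1$ generating $\scrH_{\fY_1}$. (Irreducibility inside $\fY_1$ coincides with irreducibility in $\fH$ because $\fY_1$ is $\phi$-invariant.) Setting $\fX:=\bigoplus_{i\ge1}\fX_i$, an orthogonal sum of irreducible $\phi$-invariant subspaces, gives $\scrH_\fX=\scrH_{\fX_1}\oplus\scrH_{\fY_1}=\scrH$, and Proposition \ref{subrep of sigma from fH proposition} yields $(\scrH,\sigma)\cong(\scrH_{A\restriction_\fX,B\restriction_\fX},\sigma_{A\restriction_\fX,B\restriction_\fX})$ together with the claimed decomposition $\sigma=\bigoplus_i\sigma_{\fX_i}$.
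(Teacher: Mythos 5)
Your proof is correct. For items (ii) and (iii) you follow essentially the paper's own route: take $\fY$ to be the sum of all $\phi$-invariant subspaces of $\fX^\perp$, apply item (i) to $\fX\oplus\fY$, and then iterate/induct on $\dim\fH$; your converse direction of (i) and the orthogonality of $\scrH_\fX$ and $\scrH_\fY$ for orthogonal invariant subspaces are also the paper's arguments. The genuine difference is the forward direction of item (i), the analytic core. The paper works with the scalar functions $\psi_n(\xi)=\sum_{|w|=n}\norm{P(w\xi)}^2$ (the quadratic form of your $M_n$), uses compactness of the unit ball of the finite-dimensional $\fX^\perp$ to produce maximizers and extract a convergent subsequence, showing that some $M_N=\sup_{\norm{\xi}\leq 1}\psi_N(\xi)$ is strictly less than $1$ (a limit of maximizers would generate an invariant subspace of $\fX^\perp$), and then obtains decay from the factorization $\psi_{j+k}(\xi)=\sum_{|u|=k}\psi_j(P(u\xi))$, which gives $\psi_{N+k}\leq M_N\psi_k$. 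You instead encode the same data in the completely positive transfer operator $\Psi(X)=C^*XC+D^*XD$ on $B(\fX^\perp)$, observe that $M_n=\Psi^n(\id)$ decreases to a fixed point $M_\infty=\Psi(M_\infty)$, and rule out $M_\infty\neq 0$ by a Perron--Frobenius-type rigidity: the top eigenspace of a nonzero fixed point is $A,B$-invariant (your use of the Pythagorean identity to force $PA\zeta=PB\zeta=0$ is the key computation, and it is correct), contradicting the hypothesis. Both arguments consume the finite-dimensionality of $\fX^\perp$ in an essential way---the paper through compactness of the unit ball, you through norm convergence of the decreasing sequence and the fact that $\norm{M_\infty}$ is attained as an eigenvalue---as they must, since Remark \ref{rem:fH-decomp} shows the statement fails when $\fX^\perp$ is infinite-dimensional. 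What the paper's route buys is an explicit uniform geometric decay rate on the unit ball; what yours buys is a cleaner conceptual criterion (no nontrivial invariant subspace in $\fX^\perp$ forces the only sub-unital fixed point of the compressed transfer operator to be zero) and a proof free of any choice of maximizers or subsequences.
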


\begin{proof}
	\tb{
	Proof of \ref{item complete decompose criteria}.~ First, for the converse, let $\fY \subset \fX^\perp$ be a non-trivial $\phi$-invariant subspace. Then $\scrH_\fY \subset \scrH_\fX^\perp$. Since $\scrH_\fY$ is non-trivial it follows $\scrH_\fX \neq \scrH$ and $\sigma_\fX \neq \sigma$.
	\\
	To prove the forward direction, we aim to show that $\fX^\perp \subset \scrH_\fX$.
	\tb{Heuristically we will proceed as follows: given any vector $\xi\in\fX^\perp$ we may consider a tree $t$ and $\xi_t\in\fH_t$, the representative of $\xi$ inside $\scrH$ obtained by labelling the leaves of $t$ by $w\xi$ where $w$ is an appropriate word in $A,B$. Then, when $t$ grows we note that the distance between each component of $\xi_t$ and $\fX$ tends to zero. Moreover, we will show that this convergence is ``global'' among all the leaves in such a way that the distance between the whole vector $\xi_t$ and $\scrH_\fX$ is manifestly small for $t$ large. 
	This will be achieved by making a compactness argument on the closed unit ball $(\fX^\perp)_1$.}\\
	\tb{Assume $\fX^\perp$ is nonzero (the zero case being trivially true).} Let $P$ be the projection of $\fH$ onto $\fX^\perp$ and define the sequence of continuous maps:
	\[\psi_n : \fX^\perp \rightarrow \R,\ \xi \mapsto \sum_{\vert w \vert = n} \norm{P(w\xi)}^2\]
	\tb{where the sum is taken over all words $w$ made of $n$ letters.
	Since $\xi\mapsto \sum_{|w|=n} w\xi$ is an isometry of $\fH$ and $\fX$ is closed under the action of $A,B$, we deduce that }$\psi_n(\xi) = \norm{\xi}^2$ if and only if $w\xi \in \fX^\perp$ for all words $w$ of length smaller or equal to $n$. 
	\tb{Since by assumption the space $\fX^\perp$ is finite-dimensional we have that its closed unit ball $(\fX^\perp)_1$ is compact. Therefore, }each of the maps $\psi_n$ restricted to $(\fX^\perp)_1$ attain a maximum value $M_n$ at some vector $\xi_n \in (\fX^\perp)_1$.
	The remainder of the proof will be separated into individual claims.
	\\
	\textbf{Claim 1:}
	There exists a natural number $N > 0$ such that $M_N < 1$.
	\\
	Suppose that $M_n = 1$ for all $n$. Again, by appealing to the compactness of the closed unit ball, there exists a sub-sequence $(\xi_{m_n})$ which converges to some $\xi \in (\fX^\perp)_1$. Let $w$ be any word of arbitrary length $k$. It then follows that
	\[\norm{w\xi} = \lim_{n \to\infty} \norm{w\xi_{m_n}} = 1\]
	because $\norm{w\xi_{m_n}} = 1$ for all $m_n \geq k$. From the above observation this implies that $w\xi \in \fX^\perp$ for all words $w$. However, this is a contradiction because $\fX^\perp$ does not contain any non-trivial $\phi$-invariant subspaces by assumption. This proves the claim.
	\\
	From the above claim let $N > 0$ be such that $M_N < 1$.
	For the remainder of the proof we fix a unit vector $\xi \in \fX^\perp$. Since $\fX$ is a $\phi$-invariant subspace it is clear that $(\psi_{n}(\xi) : n > 0)$ forms a decreasing sequence which is bounded below by $0$. Hence, the sequence limits to some constant $M_\xi \in [0,1)$. 
	\\
	\textbf{Claim 2.}
	The constant $M_\xi$ is equal to zero.
	\\
	Suppose $M_\xi > 0$. Then fix some $\epsilon > 0$ and let $K > 0$ be such that $\psi_k(\xi) - M _\xi< \epsilon$ for all $k \geq K$. We can then write for any $j > 0$:
	\[\psi_{j+k}(\xi) = \sum_{|v|=j,|u|=k} \norm{P(vu\xi)}^2 = \sum_{|v|=j,|u|=k}  \norm{PvP(u\xi)}^2 = \sum_{|u| = k} \psi_j(P(u\xi))\]
	where the second equality follows by noting that $PvP = Pv$ for all words $v$ because $\fX$ is invariant under $\phi$. Hence we obtain for $k \geq K$:
	\begin{align*}
		\psi_k(\xi) - \psi_{N+k}(\xi) &= \sum_{\vert w \vert = k} \norm{P(w\xi)}^2 - \psi_N(P(w\xi)) \geq \sum_{\vert w \vert = k} \norm{P(w\xi)}^2 - M_N\norm{P(w\xi)}^2 \\
		&= \psi_k(\xi)(1-M_N) \geq M_\xi(1 - M_N) > 0. 
	\end{align*}
	However, $\psi_k(\xi) - \psi_{N+k}(\xi) < \epsilon$ which can be made arbitrarily small. Therefore, this yields a contradiction and we can conclude that $M_\xi = 0$.
	\\
	\textbf{Claim 3.}
	The subspace $\scrH_\fX$ is equal to $\scrH$.
	\\
	Fix again $\epsilon > 0$. From the above there exists $k$ such that $\psi_k(\xi) < \epsilon$. This implies that there exists $\eta_\nu \in \fX$ for each vertex $\nu$ of length $k$ such that
	\[\norm{\xi - \sum_{\vert \nu \vert = k} \tau^*_\nu(\eta_\nu)}^2 = \psi_k(\xi) < \epsilon.\]
	This shows that $\xi$ belongs in the norm closure of $\scrH_\fX$. Since $\scrH_\fX$ is closed, we have $\xi \in \scrH_\fX$. Therefore, we have shown $\fX^\perp \subset \scrH_\fX$. By construction, $\scrH_{\fH} = \scrH$. Thus, we conclude $\scrH_\fX = \scrH$ which proves the claim and completes the proof.
	\\
	Proof of \ref{item fH weakly decompose}.~ The statement is obviously true when $\scrH_\fX = \{0\}$ or $\scrH_\fX = \scrH$. Hence, suppose $\scrH_\fX$ is a proper (closed) subspace of $\scrH$ and thus $\scrH_\fX^\perp$ is also a proper subspace. By the first item, there must exist a non-trivial $\phi$-invariant subspace of $\fX^\perp$. Let $\fY$ to be the largest such subspace of $\fX^\perp$ (take the sum of all $\phi$-invariant subspaces of $\fX^\perp$). Then $(\fX \oplus \fY)^\perp$ does not contain any non-trivial $\phi$-invariant subspaces and we can conclude from the first item.}
	
	Proof of \ref{item fH completely reducible}.~ Existence of such subspaces $\{\fX_i\}_{i \in I}$ follows by iteratively applying the result from item \ref{item fH weakly decompose} which must eventually terminate because $\fH$ is finite-dimensional. Taking $\fX := \oplus_{i \in I}\fX_i$ proves the required result.
\end{proof}

\begin{remark} \label{rem:fH-decomp}
	\tb{Items \ref{item complete decompose criteria} and \ref{item fH weakly decompose} no longer hold true if the requirement that $\fX^\perp$ is finite-dimensional is relaxed. For example, consider $\fH = \ell^2(\N) \oplus \ell^2(\N)$. Let $\{\delta_x\}_{x \in \N}$ (resp. $\{\gamma_y\}_{y \in \N}$) be the usual basis of the first (resp. second) copy of $\ell^2(\N)$ in $\fH$. Let $(c_n : n \geq 0)$ be any sequence of numbers in $(0,1)$ such that the product $\prod_{n=1}^k c_n$ converges to $c > 0$ (for example, $c_n = e^{-1/(n+1)^2}$) and set $d_n = \sqrt{1-c_n^2}$. Define the operators $A,B \in B(\fH)$ by:
	\begin{align*}
		A\delta_{2x} &= \delta_{2x+1},\  B\delta_{2x} = 0,\ A\delta_{2x+1} = \frac{1}{\sqrt{2}}\delta_{2x},\ B\delta_{2x+1} = \frac{1}{\sqrt{2}}\delta_{2x+1}, \\ 
		A\gamma_y &= c_y\gamma_{y+1},\ B\gamma_y = d_y\delta_{2y}.		
	\end{align*}
	It is straightforward to verify that $(A,B)$ indeed defines a Pythagorean pair. Let $\fX := \ell^2(\N) \oplus \{0\} \subset \fH$ and $\fZ := \{0\} \oplus \ell^2(\N) \subset \fH$. It is easy to see that $\fX$ is a $\phi$-invariant subspace of $\fH$ while $\fZ$ does not contain any non-trivial $\phi$-invariant subspaces. However, $\scrH_\fX$ is not equal to $\scrH$. For example, consider $\gamma_0 \in \fZ$. We have \[\tau_{0^k}(\gamma_0) = \prod_{n=0}^{k-1} c_n\gamma_k \in \fZ\] 
	for all $n \geq 0$ and in particular $\rho_\ell(\gamma_0)$ (recall $\ell$ is the ray going down the left side of $t_\infty$) is a non-zero vector which generates a sub-representation orthogonal to $\scrH_\fX$. This demonstrates that the above proposition does not hold when $\fX^\perp$ is infinite-dimensional.
}
\end{remark}

\subsection{Decomposition of the Jones representation using the Pythagorean C*-algebra}
In this subsection we shall assume the Hilbert space $\fH$ is \textit{separable} in order to use direct integral of representations. We will use the Pythagorean C*-algebra $P:=P_2$ (see Definition \ref{universal pythag algebra definition}) and the $*$-representation: 
\[\psi: P_2 \rightarrow B(\fH), a \mapsto A, b\mapsto B\]
for decomposing the Jones action $\sigma:F\act \scrH$.
Moreover, we compare this decomposition with the previous one using the monoid representation: 
$$\phi:\Mon(a,b)\to B(\fH).$$

Since $P$ contains $\Mon(a,b)$ a direct sum decomposition of $\fH$ with respect to the C*-algebra $P$ is coarser than the one from the monoid $\Mon(a,b)$. However, we can use the powerful theory of C*-algebras for decomposing {\it any} representation of $P$ into {\it irreducible} ones by using direct integrals rather than direct sums as stated in the next proposition. 

\begin{proposition} \label{direct integral decomp fH proposition}
Consider the representation $\psi:P\to B(\fH)$ of the C*-algebra $P$.
There exists a standard Borel space $(X,\mu)$ equipped with a $\sigma$-finite measure and a measurable field of representations $( (\psi_x, \fH_x):\ x\in X)$ of $P$ that are irreducible almost everywhere such that $(\psi,\fH)$ is unitary equivalent to their direct integral.\\
Moreover, $(A,B)$ decomposes as a measurable field $((A_x, B_x) := (\psi_x(a), \psi_x(b)):x\in X)$ that are Pythagorean pair on $\fH_x$ for almost every $x\in X$ giving a family of Jones representations $(\sigma_x, \scrH_x)$ whose direct integral is unitary equivalent to $(\sigma,\scrH)$. 
\end{proposition}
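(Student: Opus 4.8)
The plan is to split the statement into its two assertions: first the direct integral decomposition of the C*-representation $\psi$ into irreducibles, and second the compatibility of the Jones construction $(A,B)\mapsto(\sigma,\scrH)$ with direct integrals.

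For the first assertion, I would observe that the Pythagorean algebra $P$ is a separable unital C*-algebra (it is universal on the two generators $a,b$ subject to the single relation of Definition \ref{universal pythag algebra definition}, hence norm-separable) and that $\fH$ is separable by hypothesis. Thus $\psi$ is a representation of a separable C*-algebra on a separable Hilbert space, to which the standard direct integral decomposition theory applies (see Chapter 14 of \cite{kadison1986fundamentals}): choosing a maximal abelian von Neumann subalgebra of the commutant $\psi(P)'$ as diagonal algebra produces a standard Borel space $(X,\mu)$ with $\sigma$-finite measure together with a measurable field of representations $(\psi_x,\fH_x)_{x\in X}$, irreducible for almost every $x$, such that $(\psi,\fH)\cong\int_X^\oplus(\psi_x,\fH_x)\,d\mu(x)$. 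Setting $A_x:=\psi_x(a)$ and $B_x:=\psi_x(b)$, the fact that each $\psi_x$ is a $*$-homomorphism for almost every $x$ forces $A_x^*A_x+B_x^*B_x=\id_{\fH_x}$, so $(A_x,B_x)$ is a Pythagorean pair a.e.\ and gives rise to a Jones representation $(\sigma_x,\scrH_x)$.

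For the second assertion I would show that the Jones construction commutes with the direct integral tree by tree. Since every tree is refined by some regular tree $t_n$, the sequence $(t_n)_n$ is cofinal in $(\fT,\preceq)$, so $\scrH$ is the completion of the inductive limit of the spaces $\fH_{t_n}=\fH^{\oplus 2^n}$ along the connecting isometries $\Phi_{A,B}$. For each $n$ the natural identification $\fH^{\oplus 2^n}\cong\int_X^\oplus\fH_x^{\oplus 2^n}\,d\mu(x)$ realises $\fH_{t_n}=\int_X^\oplus(\fH_x)_{t_n}\,d\mu(x)$, and since the isometries $\Phi_{A,B}(f_{k,n})$ act leaf-wise through $A$ and $B$, which decompose as $A=\int_X^\oplus A_x\,d\mu(x)$ and $B=\int_X^\oplus B_x\,d\mu(x)$, they themselves decompose as $\Phi_{A,B}(f)=\int_X^\oplus\Phi_{A_x,B_x}(f)\,d\mu(x)$. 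Passing to the inductive limit over the cofinal sequence and completing, the measurable field $(\scrH_x)_x$ assembles into $\int_X^\oplus\scrH_x\,d\mu(x)\cong\scrH$. Finally the group action transports: by the formula $\sigma(g)=\sum_i\tau_{\nu_i}^*\tau_{\omega_i}$ of \eqref{action rearrange equation} and the fact that the partial isometries $\tau_\nu$ are defined uniformly from the construction and hence decompose as $\tau_\nu=\int_X^\oplus\tau_{\nu,x}\,d\mu(x)$, one obtains $\sigma(g)=\int_X^\oplus\sigma_x(g)\,d\mu(x)$, so that $(\sigma,\scrH)\cong\int_X^\oplus(\sigma_x,\scrH_x)\,d\mu(x)$ as desired.

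The main obstacle is the careful bookkeeping of measurability in the second assertion: one must check that $(\scrH_x)_x$ genuinely forms a measurable field of Hilbert spaces and that $x\mapsto\langle\sigma_x(g)\xi(x),\eta(x)\rangle$ is $\mu$-measurable for all $g\in F$ and $\xi,\eta\in\scrH$. This follows from the measurability of the field $(\psi_x)_x$ (which makes each $x\mapsto\langle\Phi_{A_x,B_x}(f)\,\xi(x),\eta(x)\rangle$ measurable on every $\fH_{t_n}$) together with the reduction to the countable cofinal family $(t_n)_n$, since a countable inductive limit of measurable fields of Hilbert spaces is again a measurable field whose direct integral is the completion of the direct integral of the limit. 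The compatibility of direct integral with induction recorded in Proposition \ref{direct integral induced prop} serves as a useful consistency check at the level of the monomial components identified in the remainder of the paper.
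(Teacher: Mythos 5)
Your proposal is correct, and it supplies exactly the argument the paper leaves implicit: the paper states Proposition \ref{direct integral decomp fH proposition} without proof, relying on the standard decomposition theory for representations of separable C*-algebras on separable Hilbert spaces (the Chapter 14 material of \cite{kadison1986fundamentals} cited in the preliminaries) together with the evident compatibility of the Jones construction with direct integrals, which the paper asserts just after Proposition \ref{subrep of sigma from fH proposition}. Your two steps — maximal abelian diagonal algebra in $\psi(P)'$ to get a.e.\ irreducible $(\psi_x,\fH_x)$ with $A_x^*A_x+B_x^*B_x=\id_{\fH_x}$, then the tree-by-tree identification $\fH_{t_n}\cong\int_X^\oplus(\fH_x)_{t_n}\,d\mu(x)$ along decomposable connecting isometries, passed to the completion — are precisely the intended route, and your handling of the measurability and density issues (approximating a section of $(\scrH_x)_x$ by the decomposable projections onto the $(\fH_x)_{t_n}$ and invoking dominated convergence) closes the only nontrivial gap.
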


Using the C*-algebra $P$ rather than the monoid $\Mon(a,b)$ is only useful when $\fH$ is infinite-dimensional. 
Indeed, up to taking a unitary equivalent Jones representation, in the finite-dimensional case we may decompose $\fH$ into an orthogonal direct sum of irreducible monoid representations which are of course irreducible for $P$, see the last item of Proposition \ref{pythag invariant decomp prop} for the precise statement.
Although, an irreducible representation $\fH$ of the C*-algebra $P$ may contain a sub-representation $\fX$ of the monoid $\Mon(a,b)$ that is reducible and whose restricted Pythagorean pair $(A\restriction_\fX,B\restriction_\fX)$ gives a reducible representation of $P$ as witnessed in the following example.

\begin{example}
Set $\fH := \ell^2(\Z)$ and define $A,B \in B(\fH)$ given by
		\begin{equation*}
			A(\delta_n) = 
			\begin{cases*}
				\delta_{n+1}, \quad &$n > 0$ \\
				\frac{1}{\sqrt{2}}\delta_1, \quad &$n = 0$\\
				0, \quad &$n < 0$	
			\end{cases*}
			,\quad 
			B(\delta_n) = 
			\begin{cases*}
				\delta_{n-1}, \quad &$n < 0$ \\
				\frac{1}{\sqrt{2}}\delta_{-1}, \quad &$n = 0$\\
				0, \quad &$n > 0$.	
			\end{cases*}
		\end{equation*}
	The operators $(A,B)$ form a Pythagorean pair providing a representation $\psi:P\to B(\fH).$
Now we show that any non-zero vector is cyclic for $P$ implying that $\psi$ is irreducible.
Let $\xi = \sum_{k\in K} \alpha_k \delta_{x_k}$ be an arbitrary element in $\fH$ where each of the coefficients $\alpha_k$ are non-zero.
Up to swapping the roles of $A$ and $B$ we may assume that one of the indices $x_k$ is non-negative. Then by applying the operator $A$ we can assume each of the indices are positive. Repeatedly applying $A^*$ we can assume the minimum of the indices $\{x_k\}_{k \in K}$ is equal to $0$. 
Then applying $B^*B$ gives $\lambda \delta_0$ for some non-zero scalar $\lambda$. 
It is easy to conclude from there that $\xi$ is cyclic since clearly $\delta_0$ is cyclic.
	
Now define the following proper closed subspaces of $\fH$:
	\[\fX_1 := \cspan\{\delta_n\}_{n \geq 1}, \quad \fX_2 := \cspan\{\delta_n\}_{n \leq -1}, \quad \fX := \fX_1 \oplus \fX_2.\]
Note that $\fX$ defines a monoid sub-representation of $\fH$ for $\Mon(a,b)$ and so do $\fX_1$ and $\fX_2$.
Moreover, $(A\restriction_\fX,B\restriction_\fX)$ provides a representation of the C*-algebra $P$ on $\fX$ that decomposes as well on $\fX_1$ and $\fX_2$. 
Additionally, the associated Jones representation $\sigma:F\act \scrH$ is also reducible because it contains the sub-representations $\sigma_{\fX_1}, \sigma_{\fX_2}$. More precisely, $\sigma = \sigma_{\fX_1} \oplus \sigma_{\fX_2}$ by item \ref{item complete decompose criteria} in Proposition \ref{pythag invariant decomp prop} since $(\fX_1 \oplus \fX_2)^\perp = \C \delta_0$ does not contain any non-trivial $\phi$-invariant subspaces.
\end{example}

\subsection{Canonical decomposition of Pythagorean representations} \label{pythag decomp section}

By Proposition \ref{alt subspace presentation prop}, the subspaces $\fU, \{\fV^p\}_{p \in \cP}, \fW$ are $\phi$-invariant (i.e.~invariant under the action of $A$ and $B$) and are pair-wise orthogonal by Proposition \ref{subspace fH orthogonal prop}. 
Hence, by Construction \ref{subspace of scrH from fH construction} and Proposition \ref{subrep of sigma from fH proposition} we can define the subspaces $\scrH_\fU, \scrH_{\fV^p},$ and $\scrH_\fW$ of $\scrH$ and their associated pair-wise orthogonal sub-representations $\sigma_\fU, \sigma_p := \sigma_{\fV^p},$ and $\sigma_\fW$ of $\sigma$ where $p$ stands for a ray. 
Observe $(\sigma_p, \scrH_{\fV^p})$ only depends on the class $[p]\in\cP$ of $p$ (i.e.~it is invariant under changing finite prefixes of $p$).

The above allows us to recognise that Pythagorean representations naturally have diffuse and \at{} sub-representations (see Definition \ref{atomic definition}).
In fact, these sub-representations provide a complete decomposition of the Pythagorean representation $\sigma:F\act \scrH$.

\begin{theorem}\label{complete scrH decomposition theorem}
	Let $\scrH$ be the Hilbert space induced by a Pythagorean pair of operators $(A,B)$ acting on a Hilbert space $\fH$. Then we have the following orthogonal decomposition:
	\[\scrH_{\fV} = \oplus_{p \in \cP} \scrH_{\fV^p}, \quad \sigma_\fV = \oplus_{p \in \cP} \sigma_p.\]
	In addition, \tb{if the orthogonal complement $(\fU \oplus \fV \oplus \fW)^\perp$ in $\fH$ is finite-dimensional then} the Pythagorean representation has the orthogonal decomposition:
	\[\scrH = \scrH_{\fU} \oplus\scrH_{\fV} \oplus \scrH_\fW, \quad \sigma = \sigma_\fU \oplus\sigma_\fV \oplus \sigma_\fW.\]
Moreover, if $\fH$ is finite-dimensional, then $\fW$ is trivial.
\end{theorem}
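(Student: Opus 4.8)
The first two displayed decompositions rest on the pairwise orthogonality of Proposition~\ref{subspace fH orthogonal prop} together with the direct-sum machinery of Proposition~\ref{pythag invariant decomp prop}, so I concentrate on the last assertion: if $\dim(\fH)<\infty$ then $\fW=\{0\}$. The plan is to argue by contradiction. Suppose there is a non-zero $\xi\in\fW$. By Definition~\ref{subspace of fH defintiion} item~\ref{item fW def}, Remark~\ref{subspace of fH remark}, and Proposition~\ref{alt subspace presentation prop} item~\ref{item fW alt def 1}, the vector $\xi$ is partially contained in a (countably infinite) family of rays; fix any one of them, call it $p$. Membership in $\fW$ forces $\phi(v)\xi\in\fV^\perp$ for every word $v\in\Mon(a,b)$, and using the identification $\tau_{p,n}(\xi)=p_n\xi$ of Notation~\ref{tau notation} this gives $p_n\xi\in\fV^\perp$ for all $n$.

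Next I would bring in finite-dimensionality through compactness. Since $\xi$ is partially contained in $p$, the sequence $(\norm{p_n\xi})_n$ decreases to some $c>0$ that is never attained. The vectors $p_n\xi$ all lie in the closed ball of radius $\norm{\xi}$ of the finite-dimensional space $\fH$, so by the Riesz theorem some subsequence $p_{n_k}\xi$ converges to a vector $\eta$ with $\norm{\eta}=c>0$. As $\fV^\perp$ is closed and contains every $p_n\xi$, the limit satisfies $\eta\in\fV^\perp$.

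The crux is then to upgrade this accumulation point to a genuinely \emph{contained} vector. This is exactly the period-extraction construction carried out inside the proof of Proposition~\ref{dichotomy convergence prop}: starting from an accumulation point $\eta$ of $(p_n\xi)_n$ for which $\lim_n\norm{p_n\xi}$ exists and is positive, one builds letter by letter, using the Pythagorean identity, a ray $\hat p\in[p]$ with $\norm{\hat p_m\eta}=\norm{\eta}$ for every $m$. Hence $\eta\in\fX^{\hat p}\subseteq\fV$. But then $\eta\in\fV\cap\fV^\perp=\{0\}$, contradicting $\norm{\eta}=c>0$. This forces $\fW=\{0\}$, completing the argument.

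I expect the main obstacle to be precisely this third step, namely certifying that $\eta$ is fully contained rather than merely partially contained. The subtlety is that partial containment of $\xi$ only controls the \emph{limit} of $\norm{p_n\xi}$, whereas full containment of $\eta$ requires the norm to be preserved at \emph{every} finite level; bridging this gap is where compactness and the identity $\norm{A\eta}^2+\norm{B\eta}^2=\norm{\eta}^2$ (which forbids the norm of $\eta$ from splitting across the two children) do the work, exactly as in Proposition~\ref{dichotomy convergence prop}. The remaining ingredients — the identity $\tau_{p,n}(\xi)=p_n\xi$, the closedness of $\fV^\perp$, and the inclusion $\fX^{\hat p}\subseteq\fV$ — are routine.
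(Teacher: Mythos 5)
Your treatment of the third assertion (if $\dim(\fH)<\infty$ then $\fW=\{0\}$) is correct and coincides with the paper's own argument: compactness of the closed ball of the finite-dimensional $\fH$ produces an accumulation point $\eta$ of $(p_n\xi)_n$ with $\norm{\eta}=c>0$; the period-extraction step inside the proof of Proposition \ref{dichotomy convergence prop} upgrades $\eta$ to a vector contained in a ray, hence $\eta\in\fX^{\hat p}\subset\fV$; and orthogonality to $\fV$ --- which you obtain from closedness of $\fV^\perp$, while the paper passes to the limit in $\langle p_{m_n}\xi,\eta\rangle=0$ --- forces $\eta=0$, a contradiction. These are the same ideas in the same order.

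The genuine gap is your opening sentence, which disposes of the second (and main) decomposition too quickly. Pairwise orthogonality plus Proposition \ref{pythag invariant decomp prop} does not by itself give $\scrH=\scrH_\fU\oplus\scrH_\fV\oplus\scrH_\fW$. Item \ref{item fH weakly decompose} of that proposition, applied to $\fX=\fU\oplus\fV\oplus\fW$, only yields $\scrH=\scrH_\fX\oplus\scrH_\fY$ for \emph{some} $\phi$-invariant $\fY\subset\fZ:=(\fU\oplus\fV\oplus\fW)^\perp$, and item \ref{item complete decompose criteria} gives $\scrH_\fX=\scrH$ only under the hypothesis that $\fZ$ contains no non-trivial $\phi$-invariant subspace. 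Verifying that hypothesis is a substantive step --- it is where the paper spends the second half of its proof --- and it is not formal bookkeeping. Fortunately, it is exactly your $\fW$-argument run a second time: if $\fX_0\subset\fZ$ is $\phi$-invariant and $\xi\in\fX_0$ is non-zero, then $\xi\notin\fU$ (being orthogonal to $\fU$), so some ray $p$ has $\lim_n\norm{p_n\xi}=c>0$; if this limit were attained at a finite stage, $p_k\xi$ would be a non-zero vector of $\fX_0\cap\fV$, impossible since $\fX_0\subset\fZ\perp\fV$; hence $\xi$ is partially contained in $p$, and your compactness/period-extraction argument (with $\fV^\perp$ replaced by the closed subspace $\fZ$, which contains every $p_n\xi$ by $\phi$-invariance of $\fX_0$) produces $\eta\in\fV\cap\fZ$ with $\norm{\eta}=c>0$, a contradiction. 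As written, though, your proposal leaves the main decomposition unproved. A lesser inaccuracy: the first decomposition $\scrH_\fV=\oplus_{p\in\cP}\scrH_{\fV^p}$ rests on item \ref{item fV alt def} of Proposition \ref{alt subspace presentation prop} (which gives $\fV\subset\oplus_{p\in\cP}\scrH_{\fV^p}$) together with Proposition \ref{subspace fH orthogonal prop}; Proposition \ref{pythag invariant decomp prop} plays no role there.
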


\begin{proof}
\tb{Item \ref{item fV alt def} in Proposition \ref{alt subspace presentation prop} shows that $\fV \subset \oplus_{p \in \cP} \scrH_{\fV^p}$ and thus it immediately follows that $\scrH_\fV = \oplus_{p \in \cP} \scrH_{\fV^p}$.}

Now suppose $\fH$ is finite-dimensional and $\fW$ contains a non-zero vector $\xi$. Then $\xi$ is partially contained in some ray $p$ with $\lim_{n\to\infty}\norm{p_n\xi} = c > 0$. From the proof of Proposition \ref{dichotomy convergence prop}, there exists a subsequence $(p_{m_n}\xi : n \geq 0)$ and $\eta \in \fH$ contained in some ray $q$ (thus $\eta \in \scrH_{\fV}$) such that $\norm{\eta} = c$ and $\lim_{n\to\infty}p_{m_n}\xi = \eta$. However, $\fW$ is invariant under $\phi$ and thus each of the vectors $p_{m_n}\xi$ are orthogonal to $\eta$. This gives
\[c^2 = \langle \eta, \eta \rangle = \langle \lim_{n \to \infty} p_{m_n}\xi, \eta \rangle = \lim_{n\to\infty} \langle p_{m_n}\xi, \eta \rangle = 0.\]
This is of course a contradiction and subsequently $\fW$ must be the trivial subspace. This \tb{proves the last statement of the theorem.}
	
\tb{Finally}, we examine the general decomposition of $\scrH$. Using that $\fU,\fV,\fW$ are pairwise orthogonal and Proposition \ref{pythag invariant decomp prop} it is sufficient to show that $(\fU \oplus \fV \oplus \fW)^\perp \subset \fH$ does not contain any non-trivial $\phi$-invariant subspaces.
	
Suppose such a subspace exists which we shall denote by $\fX$ and let $\xi$ be a non-zero vector in $\fX$. Since $\xi \notin \fU$, there exists a ray $p$ such that $\lim_{n\to\infty} \norm{p_n \xi} = c > 0$. If there exists $k \in \N$ such that $\norm{p_k \xi} = c$ then it follows that $p_k\xi$ is a non-zero vector contained in the ray ${}_kp$ and thus $p_k\xi \in \fV$ (see Definition \ref{subwords of ray definition}). However, this contradicts the assumption: $\fX$ is $\phi$-invariant and orthogonal to $\fV$. Therefore, no such $k$ can exist and it follows $\xi$ is partially contained in the ray $p$.
\tb{Thus, we have a finite-dimensional space $\fX$ that is closed under the actions of $A,B$ and a non-zero vector $\xi\in\fX$ that is partially contained in a ray $p$. We can then apply the argument of above (the one used to prove the last statement of the theorem) and derive a contradiction.}
\end{proof}

\begin{corollary} \label{range rho inside fH corollary}
	Let $\fH$ be a finite-dimensional Hilbert space, $\scrH$ be the Hilbert space induced by a Pythagorean pair of operators $(A,B)$ acting on $\fH$ and $p$ be a periodic ray. Then the range of the projection $\rho_p$ acting on $\scrH$ is contained in the copy of $\fH$ inside $\scrH$. Equivalently, $\Ran(\rho_p) = \fX^p$ for all periodic rays $p$.
\end{corollary}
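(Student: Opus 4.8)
The inclusion $\fX^p\subseteq\Ran(\rho_p)$ is immediate from item \ref{item fX ran def} of Proposition \ref{alt subspace presentation prop}, where it is shown that $\fX^p=\Ran(\rho_p)\cap\fH$; hence the whole content of the statement is the reverse inclusion $\Ran(\rho_p)\subseteq\fH$, after which $\Ran(\rho_p)=\Ran(\rho_p)\cap\fH=\fX^p$ follows at once. The plan is to prove $\Ran(\rho_p)\subseteq\fX^p$ by computing $\rho_p$ on a convenient dense set. Since $\rho_p$ is the SOT-limit of the decreasing net $\rho_{p,n}$ by Proposition \ref{MCT projection prop}, and since $\fX^p$ is finite-dimensional, hence closed, by item \ref{item fX unitary operator} of Proposition \ref{alt subspace presentation prop}, it suffices by continuity of $\rho_p$ and closedness of $\fX^p$ to check that $\rho_p$ sends each element of the spanning set $\{\tau_\nu^*[e,\xi]:\ \nu\in\Ver,\ \xi\in\fH\}$ (whose finite linear combinations exhaust the dense subspace $\scrK=\bigcup_t\fH_t$) into $\fX^p\subseteq\fH$.

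I would split the computation of $\rho_p(\tau_\nu^*[e,\xi])$ according to the position of $\nu$ relative to the ray $p$. If $\nu$ is disjoint from some vertex $\mu_n^p$ of $p$, then $\rho_{p,n}\tau_\nu^*[e,\xi]=0$ for all large $n$, so $\rho_p\tau_\nu^*[e,\xi]=0\in\fX^p$. If instead $\nu=\mu_k^p$ lies on $p$, then expanding $\tau_{\mu_k^p}^*[e,\xi]$ down to depth $n\geq k$ and applying $\rho_{p,n}$ isolates the single on-ray leaf, yielding $\rho_{p,n}\tau_{\mu_k^p}^*[e,\xi]=\tau_{\mu_n^p}^*[e,\phi(w_{k,n})\xi]$, where $w_{k,n}$ denotes the ray-word running from depth $k$ to depth $n$. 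The crux is therefore to identify $\lim_n \tau_{\mu_n^p}^*[e,\phi(w_{k,n})\xi]$ and to show that it belongs to $\fX^p$.

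Here I would exploit the following collapse phenomenon: if $\zeta$ is contained in the shifted ray ${}_np$ (that is $\zeta\in\fX^{{}_np}=\Ran(\rho_{{}_np})\cap\fH$), then by Observation \ref{contained vector norm obs} the off-ray components of its $\fX^p$-preimage vanish, so $\tau_{\mu_n^p}^*[e,\zeta]=[e,\eta]\in\fX^p\subseteq\fH$ for a suitable $\eta$. Periodicity is exactly what makes such a preimage available and independent of $n$: since $p=c^\infty$ with $\vert c\vert=m$ gives ${}_mp=p$, item \ref{item fX unitary operator} of Proposition \ref{alt subspace presentation prop} shows $\phi(c)$ is unitary on the finite-dimensional $\fX^p$, whence each step $\phi(x_{k+1})\colon\fX^{{}_kp}\to\fX^{{}_{k+1}p}$ is an isometric isomorphism and $\phi(w_{k,n})$ restricts to an isometric isomorphism $\fX^{{}_kp}\to\fX^{{}_np}$. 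Using finite-dimensional compactness as in the proof of Proposition \ref{dichotomy convergence prop}, together with the decomposition $\fH=\fU\oplus\fV$ of Theorem \ref{complete scrH decomposition theorem}, I would split $\xi$ into a part whose forward ray-flow is norm-preserving (landing in the contained spaces $\fX^{{}_np}$, where the collapse forces $\tau_{\mu_n^p}^*[e,\phi(w_{k,n})\xi]$ to equal a constant vector of $\fX^p$) and a complementary part whose flow $\phi(w_{k,n})\xi$ tends to $0$ in norm, contributed by $\fU$ and by the classes $\fV^{[q]}$ with $[q]\neq[p]$. This gives $\rho_p\tau_{\mu_k^p}^*[e,\xi]\in\fX^p$ and completes the argument.

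The main obstacle is this last step: making the ``collapse plus decay'' splitting rigorous and controlling the limit. Two inputs must be combined with care. Periodicity is essential to close the cycle of spaces $\fX^{{}_kp}$ and obtain the bijectivity that produces the $n$-independent preimage $\eta$; without it the spaces need not match up and the limit may leave $\fH$, in line with the infinite-dimensional counterexample of Example \ref{dichotomy counter prop}. Finite-dimensionality is essential to guarantee, through compactness, that the non-contained part of the ray-flow genuinely decays to $0$ rather than merely drifting, and to ensure that $\fX^p$ is closed. I expect the delicate point to be the bookkeeping of these limits, namely showing that the on-ray images actually converge and that their limit has no component outside $\fH$; by contrast the two inclusions and the off-ray case are routine.
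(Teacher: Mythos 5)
Your strategy --- prove $\Ran(\rho_p)\subseteq\fX^p$ by evaluating $\rho_p$ on the spanning set $\{\tau_\nu^*[e,\xi]\}$ of the dense subspace $\scrK$, then use continuity of $\rho_p$ and closedness of the finite-dimensional space $\fX^p$ --- is sound and genuinely different from the paper's. The paper argues by contradiction: after reducing to $\fH=\fV$ via Theorem \ref{complete scrH decomposition theorem} and Corollary \ref{zero projection corollary}, it approximates a hypothetical unit vector $z\in\Ran(\rho_p)\cap(\fX^p)^\perp$ by some $x\in\scrK$, writes $x$ as a finite sum of vectors supported on distinct rays using the description of $\fV$ in Proposition \ref{alt subspace presentation prop}, and pulls the component of $x$ along $p$ back through the unitary $\phi(p_n)\restriction_{\fX^p}$ to produce $\zeta\in\fX^p$ with $\|z-\zeta\|<1$, contradicting $z\perp\fX^p$. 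Your off-ray case, the identity $\rho_{p,n}\tau_{\mu_k^p}^*[e,\xi]=\tau_{\mu_n^p}^*[e,\phi(w_{k,n})\xi]$, and the collapse mechanism (periodicity makes each $\phi(p_k)\colon\fX^p\to\fX^{{}_kp}$ an isometric isomorphism, so $\tau_{\mu_k^p}^*[e,\zeta]=[e,\eta]$ with $\eta\in\fX^p$ whenever $\zeta\in\fX^{{}_kp}$) are all correct; both proofs run on the same engine, namely the unitarity statement in Proposition \ref{alt subspace presentation prop}.

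However, the crux step --- your ``collapse plus decay'' splitting --- has a genuine gap, and neither ingredient you cite fills it. First, Theorem \ref{complete scrH decomposition theorem} does not give a decomposition $\fH=\fU\oplus\fV$: it decomposes $\scrH$, and the residual space $\fZ=(\fU\oplus\fV\oplus\fW)^\perp$ can be non-zero even when $\dim\fH<\infty$. For instance, on $\fH=\C^2$ set $Ae_1=e_1$, $Ae_2=\frac{1}{\sqrt2}e_2$, $Be_1=0$, $Be_2=\frac{1}{\sqrt2}e_1$; then $\fU=\{0\}$ and $\fV=\C e_1$, but $e_2\notin\fU\oplus\fV$ (its flow along the ray $\ell\cdot a^kb$ stabilises at norm $2^{-(k+1)/2}$, so no finite family of rays carries its full norm, yet the flow does not vanish), so $\fH\neq\fU\oplus\fV$ and your proposed splitting of $\xi$ is not available from that theorem. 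Second, what your argument actually needs is the lemma: if $\xi_2\perp\fX^{q}$, with $q={}_kp$ periodic, then $q_n\xi_2\to0$. The compactness argument of Proposition \ref{dichotomy convergence prop} alone does not yield this: it only produces a non-zero subsequential limit $\eta$ contained in some shifted ray, and that is not in contradiction with $\xi_2\perp\fX^q$ unless you also know that $\fX^q$ \emph{reduces} the flow. This is the missing ingredient and must be proved, e.g.~as follows: since $E:=\phi(q_d)$ is a contraction that is norm-preserving on $\fX^q$, positivity of $\id-E^*E$ gives $E^*E\zeta=\zeta$ for $\zeta\in\fX^q$, whence $E^*\fX^q=\fX^q$ by unitarity of $E\restriction_{\fX^q}$, so $(\fX^q)^\perp$ is $E$-invariant; $E\restriction_{(\fX^q)^\perp}$ has no unimodular eigenvalue (such an eigenvector would lie in $\fX^q$), hence in finite dimensions has spectral radius strictly less than $1$, so $E^n\xi_2\to0$, and monotonicity of $n\mapsto\|q_n\xi_2\|$ concludes. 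With this lemma inserted your proof closes; as written, the decay half of the splitting is unproven.
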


\begin{proof}
Let $c$ be the prime word such that $p = c^\infty$ and $\nu$ be the vertex such that the path from $\varnothing$ to $\nu$ is given by $c$. Suppose $\fX^p := \Ran(\rho_p) \cap \fH$ is a proper subspace of $\Ran(\rho_p)$ and we shall aim to arrive at a contradiction.
	
Since $\fX^p$ is closed it has a non-trivial orthogonal complement inside $\Ran(\rho_p)$ containing a unit vector $z$.
By Theorem \ref{complete scrH decomposition theorem}, $\scrH = \scrH_\fU \oplus \scrH_\fV$ and Corollary \ref{zero projection corollary} implies that $\rho_p$ is the zero operator when restricted to $\scrH_\fU$. Therefore, up to considering $\fV$ rather than $\fH$ we may assume $\fU = \{0\}, \fH = \fV,$ and $\scrH = \scrH_\fV$.
	
Fix $0 < \epsilon < 1$. By density there exists a unit vector $x = [t_n, \ti\xi] \in \scrK$ such that $\norm{z - x} < \epsilon$. 

\textbf{Claim 1.} There exists a finite set of rays $\{p^{(i)}\}_{i \in I}$ and vectors $\{z_i\}_{i \in I}$ such that $p^{(i)} \neq p^{(j)}$ if $i \neq j$, $z_i \in \Ran(\rho_{p^{(i)}})$ and $x = \sum_{i\in I}z_i$.

We have $x = [t_n, \ti\xi] = \sum_{\nu \in \Leaf(t_n)} \tau^*_\nu(\ti\xi_\nu)$ where recall $\ti\xi_\nu \in \fV$ is the component of $\ti\xi$ corresponding to the leaf $\nu$. 
By item \ref{item fV alt def} in Proposition \ref{alt subspace presentation prop} each of the vectors $\ti \xi_\nu$ which are non-zero can be written as a finite sum 
$$\sum_{j\in I_\nu} \tau^*_{\omega_j}(\xi_j)$$ where each $\xi_j \in \fH$ is contained in some ray and $\{\omega_j\}_{j \in I_\nu}$ is a set of vertices so that $\omega_j$ is disjoint from $\omega_i$ if $j\neq i$ (recall definition of disjoint vertices from Subsection \ref{subsec:F-def}). 
Then from the above we can conclude
\[x = \sum_{\nu \in \Leaf(t_n)} \sum_{j\in I_\nu} \tau^*_\nu(\tau^*_{\omega_j}(\xi_j)) = \sum_{i \in I} \tau^*_{\mu_i}(\xi_i)\]
where 
$$I = \bigcup_{\nu\in\Leaf(t_n)} I_\nu \text { and } \{\mu_i\}_{i \in I} = \{ \nu \cdot \omega_j:\ \nu\in\Leaf(t_n) , j\in I_\nu\}$$
noting that if $(\nu,j)\neq (\nu',j')$, then $\nu\cdot \omega_j$ is disjoint from $\nu'\cdot \omega_{j'}.$
The last equality in the expression for $x$ follows because $\tau^*_\nu \circ \tau^*_{\omega_j} = \tau^*_{\nu \cdot \omega_j}$ (after identifying them with binary sequences and $\nu\cdot \omega_j$ with their concatenation). 
Since each $\xi_i$ is contained in a ray, it follows that each term $\tau^*_{\mu_i}(\xi_i) \in \Ran(\rho_{p^{(i)}})$ for some ray $p^{(i)}$.
Further, since $\mu_i$ is disjoint from $\mu_j$ then $p^{(i)} \neq p^{(j)}$ for $i \neq j$.
Therefore, we have proved the claim after taking $z_i = \tau^*_{\mu_i}(\xi_i)$. 

\textbf{Claim 2.} Using the above notation, we have $\tau_{\mu_i}(x) = \xi_i$ for all $i \in I$ and there exists a unique $m \in I$ such that $p^{(m)} = p$.

From the proof of Claim $1$, the vertex $\mu_i$ is disjoint from $\mu_j$ if $i \neq j$. This implies that $\tau_{\mu_i}(\tau^*_{\mu_j}(\xi_j)) = 0$ if $i \neq j$. The first part of the claim then immediately follows from Claim $1$.

Suppose none of the rays in the set $\{p^{(i)}\}_{i\in I}$ are equal to $p$. Then, since $\Ran(\rho_q), \Ran(\rho_{q'})$ are orthogonal subspaces for two non-identical rays $q,q'$, that would imply $x$ is orthogonal to $z$. However, this would give $\norm{z-x} = \sqrt{2}$ which contradicts the assumption $\norm{z - x}  < \epsilon < 1$. Hence, this shows there exists $m \in I$ such that $p^{(m)} = p$ and uniqueness is clear. This proves the claims.

Denote $\mu := \mu_m$, $\xi := \xi_m$ and let $k$ be the length of $\mu$.

\textbf{Claim 3.} There exists $\zeta \in \fX^p$ such that $\tau_\mu(\zeta) = \xi$.

By Claim $2$, $\tau^*_\mu(\xi)$ is contained in $p$. This gives that $\xi$ is contained in the ray ${}_kp$. Since $p$ is periodic, by considering representatives of $\tau^*_\mu(\xi)$, we can assume without loss of generality that $\mu = \nu^j$ for some $j \in \N^*$ and thus $\xi$ is contained in $p$ (hence $\xi \in \fX^p$).
	
Now item \ref{item fX unitary operator} in Proposition \ref{alt subspace presentation prop} states that $V := \phi(p_n)\restriction_{\fX^p}$ is a unitary operator on $\fX^p$ where $n$ is the length of $\nu$. 
Observe that the partial isometry $\tau_\mu$ stabilises $\fX^p$ and moreover its restriction $\tau_\mu\restriction_{\fX^p}$ coincides with $V^j$ since $\mu=\nu^j$ is the concatenation of $j$ times the sequence associated to the vertex $\nu$.
Therefore, it follows by invertibility of $V$ that there exists $\zeta \in \fX^p$ such that $\tau_\mu(\zeta) = V^j(\zeta) = \xi$ which proves the claim. 

From the above claims we can now arrive at a contradiction. Indeed, observe:
\[\norm{z - \zeta} = \norm{\tau_{\mu}(z) - \tau_\mu(\zeta)} = \norm{\tau_{\mu}(z) - \xi} = \norm{\tau_{\mu}(z) - \tau_{\mu}(x)} \leq \norm{z - x} < \epsilon < 1\]
where the first equality follows because $z, \zeta \in \Ran(\rho_p)$ and $\mu \in \Ver_p$.
However, this contradicts the fact that $z$ is a unit vector orthogonal to $\zeta \in \fX^p$. Therefore, it follows no such vector $z$ can exist and subsequently $\Ran(\rho_p) = \fX^p$ which completes the proof.
\end{proof}

\begin{remark} \label{proj range in fH remark}
	\begin{enumerate}[i]
		\item Continue to let $\fH$ be finite-dimensional and $q$ be a ray that is not eventually periodic. Then the above corollary is vacuously true because it can be shown from item ii in Remark \ref{subspace fH orthogonal remark} that $\rho_q$ is simply the zero operator.
		In addition, if $q$ is eventually periodic but not periodic, then we only have $\fX^q \subset \Ran(\rho_q)$ and equality may not hold. However, let $p$ be a (not necessarily unique) periodic ray belonging to the same equivalence class as $q$. Since $p$ is periodic, there exists $k$ such that ${}_kq = p$. Now observe $z \in \Ran(\rho_p) = \fX^p$ if and only if $\tau^*_{q, k}(z) \in \Ran(\rho_q)$. Therefore, this shows that $\Ran(\rho_q) = \tau^*_{q,k}(\fX^p)$.
		\item The above corollary does not necessarily hold when $\fH$ is infinite-dimensional. Obviously, it will not be true when $\fW$ is non-trivial because there are vectors in the range of the projections that are limit points of $\scrK$ in $\scrH$. However, the corollary may not even hold when $\fH = \fV$. As a simple example to demonstrate this fact, let $\fH = \ell^2(\N)$, $A$ be the isometric right shift operator and $B = 0$. The operators $(A,B)$ is a Pythagorean pair acting on $\fH$ producing a Jones' representation acting on a Hilbert space $\scrH$. Then $\tau^*_0(\delta_0) \in \scrH$ is in the range of $\rho_p$ where $p$ is the (periodic) ray going down the left side of $t_\infty$; however, $\tau^*_0(\delta_0) \notin \fH$.
	\end{enumerate}
\end{remark}

\subsection{Examples of decomposition of Pythagorean representations} \label{examples of pythag rep subsection}

To conclude this section, we provide some instructive examples of the decomposition from Theorem \ref{complete scrH decomposition theorem} applied to particular Pythagorean pairs of operators. We begin with an easy example where $\fH$ is finite-dimensional and thus $\fW$ is trivial.

\begin{example} \label{pythag ex 1}
	Let $\fH = \C^4$ and define a Pythagorean pair $(A,B)$ acting on $\C^4$ by identifying $A$ and $B$ with the below matrices
	\[A = 
	\begin{pmatrix}
		1 & 0 & 0 & \frac{1}{\sqrt{2}} \\
		0 & 0 & \varphi & 0 \\
		0 & 0 & 0 & 0 \\
		0 & 0 & 0 & 0
	\end{pmatrix}
	, \quad B = 
	\begin{pmatrix}
		0 & 0 & 0 & 0 \\
		0 & 0 & 0 & \frac{1}{\sqrt{2}} \\
		0 & \psi & 0 & 0 \\
		0 & 0 & 0 & 0
	\end{pmatrix}
	\]
	where $\varphi, \psi \in \bS$ satisfy $\varphi \psi = 1$. Let $e_i$ be the canonical basis for $\C^4$ for $i = 1,\dots, 4$. It is easy to see that $e_1$ is contained in the ray $\ell := \dots aaa$ (this is the ray going down the left side of $t_\infty$) while $e_2, e_3$ are \textit{eventually contained} in the ray $q:= \dots aba$ (this is the zig-zag ray which begins with a left turn). Since $Ae_4 = 1/\sqrt{2}e_1 \in \fV^\ell$ and $Be_4 = 1/\sqrt{2}e_2 \in \fV^q$ we have $e_4 \in \fV$ and $e_4$ can be considered to be contained in the ray $\ell \cdot a = \ell$ and in the ray $q\cdot b$. Therefore, it follows $\fU = \{0\}$ and we obtain the following decomposition: 
	\[\fV^\ell = \C e_1,\quad \fV^q = \C e_2 \oplus \C e_3,\quad \fH = \fV = \fV^\ell \oplus \fV^q \oplus \C e_4.\]
	Notice in this case $\fV$ is strictly larger than $\oplus_{p \in \cP} \fV^p$. It is easy to verify that $\fV^\ell, \fV^q$ have no proper $\phi$-invariant subspaces; however, $\C e_4$ is not $\phi$-invariant. Therefore $\fH$ is not completely reducible under $\phi$. Though, by item i in Proposition \ref{pythag invariant decomp prop}, we can ``discard'' the subspace $\C e_4$ by instead considering the restriction of $A,B$ on $\fH' := \fV^\ell \oplus \fV^q$ where now $\fH'$ is completely reducible under the monoid action while preserving the equivalence class of the induced Pythagorean representation $\sigma:F\act\scrH$.
	
	The above decomposition in turn induces the below decomposition of $\scrH$ and $\sigma$:
	\[\scrH = \scrH_{\fV^\ell} \oplus \scrH_{\fV^q}, \quad \sigma \cong \sigma_\ell \oplus \sigma_q.\]
	Next we shall examine the sub-representations $\sigma_\ell, \sigma_q$. Starting with $\sigma_\ell$, it is easy to observe $\sigma_\ell$ is equivalent to $(\sigma, \scrH) := (\sigma_{1,0}, \scrH_{1,0})$ which is the Pythagorean representation induced by the pair of constant operators $(1,0)$ acting on $\C$. 
	For the element $[e, 1]$ (where $e$ denotes the trivial tree), it is easy to verify that $\sigma(g)[e, 1] = [e,1]$ for all $g \in F$ and thus $[e,1]$ induces a trivial sub-representation of $\sigma$. Next consider the element $z = \tau^*_1(1)$ (recall vertices in $\Ver$ are identified with finite binary sequences in $0,1$ that are read from left to right) which belongs in $\Ran(\rho_{\ell\cdot b})$. Representatives of $z$ are shown below.
	\begin{center}
		\begin{tikzpicture}[baseline=0cm, scale = 1.1]
			\draw (0,0)--(-.5, -.5);
			\draw (0,0)--(.5, -.5);
			
			\node[label={[yshift=-18pt] \footnotesize $0$}] at 	(-.5, -.5) {};
			\node[label={[yshift=-18pt] \footnotesize $1$}] at 	(.5, -.5) {};
			
			\node at (1.2, -.5) {$\sim$};
		\end{tikzpicture}%
		\hspace*{.5em}%
		\begin{tikzpicture}[baseline=0cm, scale = 1.1]
			\draw (0,0)--(-.5, -.5);
			\draw (0,0)--(.5, -.5);
			\draw (.5, -.5)--(.1, -1);
			\draw (.5, -.5)--(.9, -1);
			
			\node[label={[yshift=-18pt] \footnotesize $0$}] at (-.5, -.5) {};
			\node[label={[yshift=-18pt] \footnotesize $1$}] at (.1, -1) {};
			\node[label={[yshift=-18pt] \footnotesize $0$}] at (.9, -1) {};
			
			\node at (1.6, -.5) {$\sim$};
		\end{tikzpicture}%
		\hspace*{.5em}%
		\begin{tikzpicture}[baseline=0cm, scale = 1.1]
			\draw (0,0)--(-.5, -.5);
			\draw (0,0)--(.5, -.5);
			\draw (.5, -.5)--(.1, -1);
			\draw (.5, -.5)--(.9, -1);
			\draw(.1, -1)--(-.3, -1.5);
			\draw(.1, -1)--(.5, -1.5);
			
			\node[label={[yshift=-18pt] \footnotesize $0$}] at (-.5, -.5) {};
			\node[label={[yshift=-18pt] \footnotesize $1$}] at (-.3, -1.5) {};
			\node[label={[yshift=-18pt] \footnotesize $0$}] at (.5, -1.5) {};
			\node[label={[yshift=-18pt] \footnotesize $0$}] at (.9, -1) {};
		\end{tikzpicture}%
	\end{center}
	We wish to compute the associated diagonal matrix coefficient $\phi_z:g\in F\mapsto \langle \sigma(g)z,z\rangle$. Observe that $\tau_{1a}(z) = 1$ for all sequence of zeroes $a \in \{0\}^{\N^*}$ and moreover $\tau_{1b}(z) = 0$ for all sequences $b \in \{0, 1\}^{\N^*}$ containing at least one $1$. By comparing the above statement with the description of the parabolic subgroup $F_{\ell \cdot b}$ using Equation \ref{parabolic subgroup condition eqn} it follows $\sigma(g)z = z$ if and only if $g \in F_{\ell \cdot b}$. Hence, we deduce $\phi_z(g)=1$ if $g\in F_{\ell \cdot b}$ and $0$ otherwise. 
	Therefore, the matrix coefficient associated to $z$ is simply the trivial extension of the matrix coefficient of the trivial representation $1_{F/F_{\ell \cdot b}}$. Hence, by a classical result we can surmise that the sub-representation of $\sigma$ generated by $z$ is equivalent to the quasi-regular representation $\lambda_{F/F_{\ell \cdot b}}$ which is irreducible by Lemma \ref{monomial parabolic irrep lemma}. 
	
	Finally, we claim that $\{[e, 1], z\}$ generates the sub-representation $\sigma_\ell$. Indeed, first note $\tau^*_{0^n}(1) \sim [e, 1]$ for $n \in \N^*$. Then for a vertex $\nu$ in the centre or right side of $t_\infty$ (see Subsection \ref{subsec:F-def} for definition), there exists $g \in F$ such that $\nu \cdot 0$ and $10$ are corresponding vertices (since both vertices $\nu \cdot 0$ and $10$ lie in the centre of $t_\infty$, see Notation \ref{match vertices}). Applying Equation \ref{action rearrange equation} we obtain
	\[\sigma(g)z = \tau^*_{\nu \cdot 0}\tau_{10}(z) = \tau^*_{\nu \cdot 0}(1) = \tau^*_\nu(1).\]
	Since $\{\tau^*_\nu(1)\}_{\nu \in \Ver}$ spans the entire space $\scrH$, it follows $\{[e, 1], z\}$ generates the representation $\sigma_p$. Therefore we can conclude $\sigma_\ell$ is a direct sum of two irreducible sub-representations:
	\[\sigma_\ell \cong 1_F \oplus \lambda_{F/F_{\ell \cdot b}}.\]  
	
	Unsurprisingly, the sub-representation $\sigma_q$ follows a similar analysis; however, the analysis is somewhat simplified because in this case $\sigma_q$ is irreducible. Take $z = [e, e_3]$ and once again let $\phi_z$ denote the diagonal matrix coefficient associated to $z$. Recall $\Ver_q$ denotes the set of vertices $q$ passes through and let $\Ver_{q_e}, \Ver_{q_o} \subset \Ver_q$ be the subset of vertices whose binary sequence ends in $0$ and $1$, respectively. It is then easy to see that
	\begin{align} \label{anti-diagonal tau equation}
		\tau_\nu([e, e_3]) = 
		\begin{cases*}
			0, \quad & $\nu \notin \Ver_q$,\\
			\varphi [e, e_2], \quad & $\nu \in \Ver_{q_e}$,\\
			[e, e_3], \quad & $ \in \Ver_{q_o}$.\\
		\end{cases*}
	\end{align}
	Then in a similar fashion to the above case, by comparing the preceding equation with the description of the parabolic subgroup $F_{q}$ using Equation \ref{parabolic subgroup condition eqn} it follows $\phi_z(g)=1$ if $g\in F_{q}$ and $0$ otherwise. Therefore, the sub-representation of $\sigma_q$ generated by $z$ is equivalent to the quasi-regular representation $\lambda_{F/F_{q}}$ which is again irreducible by Lemma \ref{monomial parabolic irrep lemma}. Further, $\lambda_{F/F_{\ell \cdot b}} \not \cong \lambda_{F/F_{q}}$ by the Mackey-Shoda Criterion (Theorem \ref{Mackey-Shoda criteria}) because $\ell \cdot b$ and $q$ are in different $F$-orbits. 
	
	Lastly, all that remains to be shown is that $z$ is a cyclic vector. Let $\nu$ be any vertex in $t_\infty$. Then by Lemma \ref{match vertices} there exists $g \in F$ such that $\nu\cdot 01$ and $01$ are corresponding vertices. Applying Equation \ref{action rearrange equation} we obtain
	\[\sigma_q(g)z = \tau^*_{\nu \cdot 01}\tau_{01}(z) = \tau^*_{\nu \cdot 01}(e_3) = \tau^*_\nu(e_3).\]
	Then as in the above case we can conclude $\sigma_q \cong \lambda_{F/F_{q}}$.
	
	Combining the above results, we have a complete decomposition of $\sigma$ into irreducible sub-representations:
	\[\sigma \cong 1_F \oplus \lambda_{F/F_{\ell \cdot b}} \oplus \lambda_{F/F_{q}}.\]
	Interestingly, each of the subspaces $\fV^\ell, \fV^q$ induces non-equivalent monomial representation. We shall see in Section \ref{rep from rays section} that this holds more generally. Further, $\fV^\ell$ also induces a trivial representation. Informally, the reason why $\fV^\ell$ induces an additional trivial sub-representation is because the ``support'' of the element $[e,1] \in \scrH_{\fV^\ell}$ is contained in the left side of $t_\infty$. Since every element in $F$ fixes the point $0$ it follows that $[e,1]$ generates the trivial sub-representation. 
\end{example}

Next we shall study a more complicated example where $\fH$ is infinite-dimensional which presents a more involved decomposition of $\fH$ and $\scrH$.

\begin{example}\label{pythag decomp example}
	We begin by defining a partition of $\Ver$:
	\begin{align*}
		\cC_1' &= \{0^m : m \in \N\},\ \cC_1'' = \{0^m10^n1\cdot \omega : m \in \N^* ,n \in \N, \omega \in \Ver\}, \\
		\cC_2 &= \{0^m10^n : m \in \N^*, n \in \N\},\ \cC_3 = \{1 \cdot \omega : \omega \in \Ver\}.
	\end{align*}

	\begin{center}
		\begin{tikzpicture}[baseline=0cm, scale = 1.2]
			\draw (0,0)--(-.8, -.5);
			\draw (0,0)--(.8, -.5);
			\draw (-.8, -.5)--(-1.2, -1);
			\draw (-.8, -.5)--(-.4, -1);
			\draw (.8, -.5)--(.4, -1);
			\draw (.8, -.5)--(1.2, -1);
			\draw (-1.2, -1)--(-1.5, -1.5);
			\draw (-1.2, -1)--(-.9, -1.5);
			\draw (-.4, -1)--(-.7, -1.5);
			\draw (-.4, -1)--(-.1, -1.5);
			\draw (.4, -1)--(.1, -1.5);
			\draw (.4, -1)--(.7, -1.5);
			\draw (1.2, -1)--(.9, -1.5);
			\draw (1.2, -1)--(1.5, -1.5);

			\node at (0,0) {$\bluebullet$};
			\node at (-.8, -.5) {$\bluebullet$};
			\node at (.8, -.5) {$\redbullet$};
			\node at (-1.2, -1) {$\bluebullet$};
			\node at (-.4, -1) {$\greenbullet$};
			\node at (.4, -1) {$\redbullet$};
			\node at (1.2, -1) {$\redbullet$};			
			\node at (-1.5, -1.5) {$\bluebullet$};
			\node at (-.9, -1.5) {$\greenbullet$};
			\node at (-.7, -1.5) {$\greenbullet$};
			\node at (-.1, -1.5) {$\bluebullet$};
			\node at (.1, -1.5) {$\redbullet$};
			\node at (.7, -1.5) {$\redbullet$};
			\node at (.9, -1.5) {$\redbullet$};
			\node at (1.5, -1.5) {$\redbullet$};
		\end{tikzpicture}%
	\end{center}

	As an example, the tree $t_3$ is drawn above where the blues vertices are in $\cC_1 := \cC_1' \cup \cC_1''$, the green vertices are in $\cC_2$ and the red vertices are in $\cC_3$. \\
	Take $\fH := \ell^2(\Ver)$ and let $\{\delta_\nu : \nu \in \Ver\}$ be the usual orthonormal basis of $\fH$. Choose any sequence $(c_n : n \geq 1)$ of numbers in $(0,1)$ such that their product converges to $1/2$. Define the operators $A,B \in B(\fH)$ by setting
	\begin{equation*}
		A(\delta_\nu) = 
		\begin{cases*}
			\frac{1}{\sqrt{2}} \delta_{\nu 0}, &$\quad \nu \in \cC_1$ \\
			\delta_{\nu 0}, &$\quad \nu \in \cC_2$ \\
			c_k \delta_{\nu 0}, &$\quad \nu \in \cC_3$
		\end{cases*}
		,\quad
		B(\delta_\nu)
		\begin{cases*}
			\frac{1}{\sqrt{2}} \delta_{\nu 1}, &$\quad \nu \in \cC_1$ \\
			0, &$\quad \nu \in \cC_2$ \\
			\sqrt{1-c_k^2} \delta_{\nu 1}, &$\quad \nu \in \cC_3$
		\end{cases*}
	\end{equation*} 
	where $k = \length(\nu)$. It is easy to verify that $(A,B)$ forms a Pythagorean pair. \\
	We shall now decompose $\fH$ into the form described in Theorem \ref{complete scrH decomposition theorem}. If $\nu \in \cC_1''$ then $\norm{q_n\delta_\nu} = 2^{-n/2}$ for all rays $q$ and $n \in \N$. Hence it follows $\delta_\nu \in \fU$. If $\nu \in \cC_2$ then $\delta_{\nu}$ is contained in the ray $\ell := \dots aaa$ and thus $\delta_\nu \in \fV^\ell$. It is clear that if $\nu \in \cC_1' \cup \cC_3$ then $\delta_\nu$ is not contained in a finite set of rays and thus $\delta_\nu \notin \fV$. Further, if $\nu \in \cC_1'$ then $\lim_{n\to\infty}\norm{\ell'_n\delta_\nu} = 1/\sqrt{2}$ where $\ell'$ is the ray given by $\ell \cdot b$. If $\nu \in \cC_3$ then $\lim_{n\to\infty}\norm{\ell_n\delta_\nu} > 0$. This shows that $\delta_\nu \notin \fU$ for $\nu \in \cC_1' \cup \cC_3$. Hence we have
	\[\fU = \cspan\{\delta_\nu : \nu \in \cC_1''\}, \quad \fV = \fV^\ell = \textrm{span}\{\delta_\omega : \omega \in \cC_2\}.\]
	Note, we take the closed linear span for $\fU$ but only take the linear span for $\fV$ because $\fU$ is a closed subspace while $\fV$ is only closed under finite sums.
	Next, we consider $\fW$ which is a subspace of $(\fU \oplus \fV)^\perp = \cspan\{\delta_\nu : \nu \in \cC_1' \cup \cC_3\}$. Let $\nu \in \cC_3$. It is clear that $\delta_\nu$ is partially contained in the ray $\ell$. Similarly, for any vertex $\omega \in \Ver$ the vector $\tau_{\omega \cdot 1}(\delta_\nu) = \lambda \delta_{\nu \cdot \omega \cdot 1}$ is partially contained in the ray $\ell$ where $\lambda$ is some non-zero scalar. Thus $\delta_\nu$ is partially contained in the rays $\ell \cdot w$ where $w$ ranges across all words in $\Mon(a,b)$.  As well, it clear $\tau_\omega(\delta_\nu) \in \fV^\perp$ for all $\omega \in \Ver$ and $\bigcup_{w \in \Mon(a,b)} \Ver_{\ell\cdot w} = \Ver$. This shows that $\delta_\nu \in \fW$ if $\nu \in \cC_3$. If $\nu \in \cC'_1$ then $\delta_\nu \notin \fW$ because $B\delta_\nu \in \fV$ and $\fW$ is a $\phi$-invariant subspace. Therefore, we have 
	\[\fW = \cspan\{\delta_\nu : \nu \in \cC_3\}.\]
	However, note that the above subspaces does not form a complete decomposition $\fH$ since
	\[\fZ := (\fU \oplus \fV \oplus \fW)^\perp = \cspan\{\delta_\nu : \nu \in \cC_1'\}.\]
	Therefore, similarly to Example \ref{pythag ex 1}, $\fH$ is again not completely reducible under the monoid action $\phi$ and $\fZ$ is the ``residual'' subspace which is not $\phi$-invariant. This can be easily verified as $B\delta_\varnothing  = 1/\sqrt{2}\delta_1 \in \fW$ and $B\delta_{0^m} = 1/\sqrt{2}\delta_{0^m1} \in \fV$ for $m \in \N^*$.
	\tb{However, in contrast, the subspace $\fZ$ cannot be immediately discarded because it is infinite-dimensional. Though, we shall show that this is still the case further down below.}

	Furthermore, unlike Example \ref{pythag ex 1}, even after discarding the subspace $\fZ$ we are not able to assume that the monoid action is completely reducible. This is because $\fW$ cannot be decomposed into $\phi$-irreducible components. Indeed, if $\fW'$ is a $\phi$-invariant subspace of $\fW$ then define $\fW''$ to be the closure of $\phi(\Mon(a,b)\backslash \{e\})(\fW')$ which is a $\phi$-invariant closed subspace of $\fW'$ where $e$ is the identity element in $\Mon(a,b)$. Each vector in $\fW'$ can be uniquely expressed in the form $\sum_k \alpha_k \delta_{\nu_k}$ where $\alpha_k$ are non-zero scalars and $\nu_k$ are vertices in $\cC_3$. For each vertex $\nu \in \cC_3$ denote $P_\nu \in B(\fW)$ to be the projection onto $\C \delta_\nu$. Let $V$ be the subset of $\cC_3$ such that $\nu \in V$ if and only if $\fW' \not \subset \ker(P_\nu)$. Set $\nu$ to be a vertex in $V$ with the shortest length (such a vertex must exist but may not be unique) and let $w \in \fW'$ be a non-zero vector which does not belong in the kernel of $P_\nu$. Note for all $v \in \Mon(a,b)$, $\phi(v)\delta_\nu = \lambda\delta_\omega$ where $\lambda$ is a non-zero scalar and $\omega$ is some vertex with length equal to $\length(\nu) + \length(v)$. Therefore, since the vertex $\nu$ has the shortest length in $V$, it follows that $P_\nu(\phi(v)w) = 0$ and $\norm{w - \phi(v)w} \geq \norm{P_\nu w} > 0$ for all $v \in \Mon(a,b)\backslash \{e\}$. Thus $\fW''$ is a proper closed $\phi$-invariant subspace of $\fW'$ from which we can conclude there are no subspaces of $\fW$ which are irreducible under $\phi$. This shows that item \ref{item fH completely reducible} in Proposition \ref{pythag invariant decomp prop} cannot be extended for infinite-dimensional Hilbert spaces.

To better understand the vectors in $\fZ$ we list below some representatives of $[e, \delta_0]$.
	\begin{center}
		\begin{tikzpicture}[baseline=0cm, scale = 1.1]
			\node[label={[yshift=-20pt] \footnotesize $\delta_{0}$}] at 	(0,-.3) {};
			\node at (0.7, -.5) {$\sim$};
		\end{tikzpicture}%
		\hspace*{.5em}%
		\begin{tikzpicture}[baseline=0cm, scale = 1.1]
			\draw (0,0)--(-.5, -.5);
			\draw (0,0)--(.5, -.5);
			
			\node[label={[xshift=-5pt, yshift=-25pt] \footnotesize $\frac{1}{\sqrt{2}}\delta_{00}$}] at 	(-.5, -.5) {};
			\node[label={[yshift=-25pt] \footnotesize $\frac{1}{\sqrt{2}}\delta_{01}$}] at 	(.5, -.5) {};
			
			\node at (1.2, -.5) {$\sim$};
		\end{tikzpicture}%
		\hspace*{.5em}%
		\begin{tikzpicture}[baseline=0cm, scale = 1.1]
			\draw (0,0)--(-.5, -.5);
			\draw (0,0)--(.5, -.5);
			\draw (-.5, -.5)--(-.9, -1);
			\draw (-.5, -.5)--(-.1, -1);
			
			\node[label={[xshift = -5pt, yshift=-25pt] \footnotesize $\frac{1}{2}\delta_{000}$}] at (-.9, -1) {};
			\node[label={[xshift = 3pt, yshift=-25pt] \footnotesize $\frac{1}{2}\delta_{001}$}] at (-.1, -1) {};
			\node[label={[yshift=-25pt] \footnotesize $\frac{1}{\sqrt{2}}\delta_{01}$}] at (.5, -.5) {};
			\node at (1.5, -.5) {$\sim$};
		\end{tikzpicture}%
		\hspace*{.5em}%
		\begin{tikzpicture}[baseline=0cm, scale = 1.1]
			\draw (0,0)--(-.5, -.5);
			\draw (0,0)--(.5, -.5);
			\draw (-.5, -.5)--(-.9, -1);
			\draw (-.5, -.5)--(-.1, -1);
			\draw (-.9, -1)--(-1.3, -1.5);
			\draw (-.9, -1)--(-.5, -1.5);
			
			\node[label={[xshift = -12pt, yshift=-25pt] \footnotesize $\frac{1}{2\sqrt{2}}\delta_{0000}$}] at (-1.3, -1.5) {};
			\node[label={[xshift = 5pt, yshift=-25pt] \footnotesize $\frac{1}{2\sqrt{2}}\delta_{0001}$}] at (-.5, -1.5) {};
			\node[label={[xshift = 3pt, yshift=-25pt] \footnotesize $\frac{1}{2}\delta_{001}$}] at (-.1, -1) {};
			\node[label={[yshift=-25pt] \footnotesize $\frac{1}{\sqrt{2}}\delta_{01}$}] at (.5, -.5) {};
		\end{tikzpicture}%
	\end{center}
	It can been seen that all the components of the representatives of $[e, \delta_0]$ are vectors in $\fV$ except for the first component which tends to $0$ as the length of the first vertex increases. Hence, even though $\delta_0 \in \fZ$, its representatives can almost be described by vectors in $\fV$ and $\delta_0$ can be considered to be almost contained in a finite set of rays. In fact $\delta_0 \in \scrH_\fV$ and more generally, $\delta_{0^m} \in \scrH_\fV$ for $m \in \N^*$ by norm closure. In a similar fashion, it can be observed that $\delta_\varnothing \in \scrH_\fV \oplus \scrH_\fW$.
	\tb{Therefore, $\fZ \subset \scrH_\fV \oplus \scrH_\fW$ and thus we have the complete decomposition of $\scrH$ and $\sigma$ given by $\fU$, $\fV$, $\fW$.}
	
We will not study the sub-representations $\sigma_\ell, \sigma_\fW$ induced by the above decomposition since the infinite-dimensional case requires a more careful treatment which we will instead study in more detail in Section \ref{rep from rays section}.
\end{example}	

Given the decomposition $\scrH_\fV = \oplus_{p \in \cP} \scrH_{\fV^p}$ it is natural to wonder if the subspaces $\fW^p$ can be analogously defined so that $\scrH_\fW = \oplus_{p \in \cP} \scrH_{\fW^p}$. Indeed $\fW^p$ can be defined in a similar fashion to $\fW$ as in Definition \ref{subspace of fH defintiion} but now restricting $\xi$ to be only partially contained in rays belonging to the class $[p]$. This will continue to give a $\phi$-invariant subspace and it can be shown that items \ref{item fW alt def 1}, \ref{item fW alt def 2} in Proposition \ref{alt subspace presentation prop} continue to hold after making the obvious adjustments.
However, in general $\scrH_\fW \neq \oplus_{p \in \cP} \scrH_{\fW^p}$ as we illustrate below.

\begin{example} \label{pythag decomp ex3}
	Again take $\fH = \ell^2(\Ver)$ and let $(c_n : n \geq 1)$ be the same sequence from Example \ref{pythag decomp example}. Call a vertex even (resp. odd) if its associated binary sequence ends with $0$ (resp. $1$). The vertex $\varnothing$ is neither even or odd. Define the operators $A,B \in B(\fH)$ by setting for $\nu \in \Ver \backslash \{\varnothing\}$:
		\begin{equation*}
		A(\delta_\nu) = 
		\begin{cases*}
			c_k\delta_{\nu \cdot 0}, &$\quad \nu = \omega \cdot 0^k$ \\
			\sqrt{1-c_k^2} \delta_{\nu \cdot 0}, &$\quad \nu = \mu \cdot 1^k$
		\end{cases*}
		,\quad
		B(\delta_\nu)
		\begin{cases*}
			\sqrt{1-c_k^2}  \delta_{\nu \cdot 1}, &$\quad \nu = \omega \cdot 0^k$ \\
			c_k\delta_{\nu \cdot 1}, &$\quad \nu = \mu \cdot 1^k$
		\end{cases*}
	\end{equation*} 
	where $k \in \N^*$, $\omega$ is not an even vertex and $\mu$ is not an odd vertex. Set $A(\delta_\varnothing) = 1/\sqrt{2}\delta_0$ and $B(\delta_\varnothing) = 1/\sqrt{2}\delta_1$. It is easy to verify that $(A,B)$ forms a Pythagorean pair.
	
	Consider the vector $\delta_\nu$ and denote the rays $\ell := \dots aaa$, $r := \dots bbb$. It is clear that $\delta_\nu$ is partially contained in the rays $\ell, r$. In addition, $\delta_\nu$ is partially contained in the ray $\ell \cdot v$ where $v \in \Mon(a,b)$ is a word that ends with $b$ and $\delta_\nu$ is also partially contained in the ray $r \cdot w$ where $w \in \Mon(a,b)$ is a word that ends with $a$. Therefore, $\delta_\nu$ is partially contained in the (countably infinite) set of rays which induces a dyadic rational and these rays belong in the equivalence classes $[\ell], [r]$. Note, every dyadic rational in $(0,1)$ is induced by two different rays. For example, the rays $r \cdot 0$ and $\ell \cdot 1$ both induce the dyadic rational $1/2$.  
	
	The above shows that every vector $\delta_\nu$ belongs in the subspace $\fW$ and thus $\fH = \fW$ while $\fU = \fV = \{0\}$. Further, it shows every vector in $\fW$ is partially contained in rays belonging to two different equivalence classes. Thus, we have $\fW^\ell = \fW^r = \{0\}$ and clearly $\scrH = \scrH_\fW \neq \scrH_{\fW^\ell}\oplus \scrH_{\fW^r}$. Hence, this provides an example of the situation described in the discussion preceding this example. 
	
	As an additional note, consider the element $z := \rho_{\ell \cdot b}(\delta_\varnothing)$ which is a limit point of $\scrK$ inside $\scrH$. It can be observed that $\scrX := \cspan(\sigma(F)z)$ consists of vectors which are a sum of vectors in $\Ran(\rho_p)$ where $p$ is a ray in the same equivalence class as $\ell$. Further, if $g,h \in F$ such that $g'(r) \neq h'(r)$ then $\sigma(g)z, \sigma(h)z$ are orthogonal. Therefore, from the above we can deduce that the subspace $\scrX$ generated by $z$ in an infinite-dimensional subspace such that $\scrK \cap \scrX = \{0\}$.
\end{example}



\section{Classification of Atomic Pythagorean Representations} \label{rep from rays section}

In Section \ref{decompose pythag rep section} we provided a general decomposition of Pythagorean representations into diffuse and \at{} parts. 
The goal of this section is to describe and classify the atomic part of a Pythagorean representations.
For pedagogical reasons we will separately consider the finite and infinite-dimensional cases and the decompositions of $\fV$ and $\fW$.

\subsection{Family of one-dimensional representations of $F$, $F_p$ and $\widehat{F_p}$.}

For a ray $p$, define the family $\{\chi_\varphi^p\}_{\varphi \in \bS}$ of one-dimensional representations of $F_p:=\{g\in F:\ g(p)=p\}$ given by
		\[\chi_\varphi^p(g) = \varphi^{\log_2 g'(p)} \textrm{ for all $g \in F_p$}\]
		where recall the definition of the derivative from Subsection \ref{subsec:F-def}.

We consider the following class of monomial representations of $F$:
\[\{\Ind_{F_p}^F \chi_\varphi^p: \varphi \in \bS \textrm{ and } p \textrm{ is a ray}\}.\]
Each of the representations are irreducible by Lemma \ref{monomial parabolic irrep lemma}. 
Note that when $p\neq \ell,r$, then the above representations are infinite-dimensional since $F_p\subset F$ has infinite index and otherwise $\Ind_{F_p}^F \chi_\varphi^p= \chi_\varphi^p$ is one-dimensional. 
Moreover, we have that if $p$ is an eventually periodic ray with period of length $d$, then an element of $F_p$ has a derivative at $p$ equal to a power of $2^d$. 
Therefore, $\chi_\varphi^p$ only depends on the ray $p$ and $\varphi^d$; thus we may choose to have that the principal argument $\textrm{Arg}(\varphi)$ of $\varphi$ belongs to $[0,2\pi/d)$.
Using the Mackey-Shoda criteria we deduce that when $p\neq \ell,r$, then $\Ind_{F_p}^F \chi_\varphi^p$ only depends on the {\it class} of $p$ (i.e.~the ray $p$ up to finite prefixes) and $\varphi^d$.
When $p$ is not eventually periodic then $g\in F_p$ has necessarily slope 1 at $p$ implying that $\chi_\varphi^p=1_{F_p}$ does not depend on $\varphi$ and thus $\Ind_{F_p}^F=\lambda_{F/F_p}$ is the quasi-regular representation.

Consider the subgroup
$$\widehat{F_p}:=\{g \in F : g(p) = p, g'(p) = 1\}$$ and the associated quasi-regular representation $\lambda_{F/\widehat{F_p}}$ which will appear in the decomposition of atomic representations.
When $p$ is eventually periodic this representation is reducible since $\widehat F_p\subset F$ is not self-commensurated as explained in Subsection \ref{general monomial rep section}.
Although, these quasi-regular representations are still easy to classify: they only depends on the class of $p$ when $p\neq\ell,r$.

\begin{definition} \label{monomial rep in Pythag rep def}
	Consider the following sets of monomial representations which form the building blocks of all \at{} Pythagorean representations.
	\begin{align*}
		R_{fin, 1} &= \{ \chi_\varphi^\ell \oplus \Ind_{F_{p \cdot b}}^F\chi_\varphi^{p \cdot b}:\ [p] = [\ell],\ \varphi \in \bS\} \\ 
		&\cup \{ \chi_\varphi^r \oplus \Ind_{F_{p \cdot a}}^F\chi_\varphi^{p \cdot a}:\ [p] = [r],\ \varphi \in \bS, \varphi \neq 1 \},\\
		R_{fin, d} &= \{ \Ind_{F_p}^F\chi_\varphi^p:\ p \text{ periodic with period length } d,\ \varphi \in \bS, 0 \leq \textrm{Arg}(\varphi) < 2\pi/d \}, d\geq 2\\
		R_{inf, 1} &= \{ \lambda_{F/\widehat{F_\ell}} \oplus \lambda_{F/\widehat{F_{p \cdot b}}}:\ [p] = [\ell],\ \varphi \in \bS \} \\
		&\cup \{ \lambda_{F/\widehat{F_r}} \oplus \lambda_{F/\widehat{F_{p \cdot a}}}:\ [p] = [r],\ \varphi \in \bS, \varphi \neq 1 \},\\
		R_{inf, d} &= \{ \lambda_{F/\widehat{F_p}}:\ p \text{ periodic with period length } d,\ \varphi \in \bS, 0 \leq \textrm{Arg}(\varphi) < 2\pi/d \}, d\geq 2\\
		R_{inf, \infty}&= \{ \lambda_{F/F_p}:\ p \text{ not eventually periodic} \}.
	\end{align*}
\end{definition}

\begin{remark} \label{monomial rep in Pythag rep remark}
	\begin{enumerate}[i]
		\item All the representations in the above sets are pair-wise non-equivalent. Further, the representations in the sets $\cup_{d > 1}R_{fin, d}, R_{inf, \infty}$ are irreducible.
		\item The subscript \textit{fin} in the above notations stands for how the representations in those sets appear as sub-representations in $\sigma_\fV$ when $\fV$ is finite-dimensional (note $\fW$ is never finite-dimensional). Similarly, the subscript \textit{inf} stands for how the representations in those sets appear as sub-representations in $\sigma_\fV \oplus \sigma_\fW$ \textit{only if} $\fV \oplus \fW$ is infinite-dimensional. Proving the above will be the focus of the remainder of the section.
	\end{enumerate}
\end{remark}

\subsection{Decomposition of $\sigma_\fV$ for finite-dimensional $\fH$} \label{sigma_p decomp finite subsection}

Before we proceed, we remind the readers that importantly even when $\fH$ is finite-dimensional, the larger Hilbert space $\scrH$ will always be infinite-dimensional as explained in Remark \ref{scrH dimension remark}.
Further, recall the sub-representation $\sigma_\fV$ defined in Proposition \ref{subrep of sigma from fH proposition}.

\begin{theorem} \label{sigma_p decompose finite theorem}
	Let $(A,B)$ be a Pythagorean pair over a {\bf finite-dimensional} Hilbert space $\fH$ and consider the sub-representation $(\sigma_\fV, \scrH_{\fV})$ of $(\sigma_{A,B}, \scrH_{A,B})$.
	\begin{enumerate}
		\item The representation $\sigma_\fV$ decomposes as a finite direct sum of representations appearing in $R_{fin,d}$ for $1\leq d\leq n$. Moreover, if $\sigma_{\fV}\cong \sum_{d,j} \pi_{d,j}$ with $\pi_{d,j}\in R_{fin,d}$ where $1\leq d\leq n$ and $j$ is in some index set $J_d$, then $\sum_{d,j} d  \leq \dim(\fV).$
		\item Conversely, a representation of $R_{fin,d}$ is an \at{} Pythagorean representation for any natural number $d\geq 1$.
	\end{enumerate}
\end{theorem}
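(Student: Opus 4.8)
The plan is to reduce the whole statement to the single-ray analysis already carried out in Example~\ref{pythag ex 1}, and then to diagonalise the ``period automorphism'' attached to each eventually periodic class. First I would apply Theorem~\ref{complete scrH decomposition theorem} to write $\sigma_\fV=\oplus_{[p]\in\cP}\sigma_p$. Since $\fH$ is finite-dimensional so is $\fV$, whence only finitely many $\fV^p$ are non-zero; moreover, by item~i of Remark~\ref{subspace fH orthogonal remark}, every surviving class $[p]$ contains a periodic representative $p=c^\infty$ with $c$ prime of some length $d\le n$. Thus it suffices to decompose each $\sigma_p$ individually and to check that it is a finite direct sum of representations drawn from $R_{fin,d}$.

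The core step treats a fixed periodic class. Here item~\ref{item fX unitary operator} of Proposition~\ref{alt subspace presentation prop} tells us that $V:=\phi(c)$ restricts to a \emph{unitary} on the finite-dimensional space $\fX^p=\Ran(\rho_p)\cap\fH$, so I would diagonalise it, obtaining an orthonormal eigenbasis $\xi_1,\dots,\xi_m$ with eigenvalues $\lambda_1,\dots,\lambda_m\in\bS$. For each $k$ I set $\varphi_k\in\bS$ to be the unique root of $\varphi_k^{\,d}=\lambda_k$ with $\mathrm{Arg}(\varphi_k)\in[0,2\pi/d)$; this is legitimate because $\chi_\varphi^p$ depends only on $\varphi^d$ and because any $g\in F_p$ has $g'(p)$ equal to a power of $2^d$. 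Viewing $\xi_k$ inside $\scrH$ (using Corollary~\ref{range rho inside fH corollary} to identify $\Ran(\rho_p)$ with $\fX^p$), I would compute the diagonal matrix coefficient $g\mapsto\langle\sigma(g)\xi_k,\xi_k\rangle$, show via Equation~\ref{parabolic subgroup condition eqn} that it agrees with the matrix coefficient of $\Ind_{F_p}^F\chi_{\varphi_k}^p$, and establish cyclicity exactly as in Example~\ref{pythag ex 1} by applying the rearrangement identity to move the support around $t_\infty$. When $p\ne\ell,r$ these summands are irreducible by Lemma~\ref{monomial parabolic irrep lemma}, so $\sigma_p\cong\oplus_{k=1}^m\Ind_{F_p}^F\chi_{\varphi_k}^p$ lies in $R_{fin,d}$. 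The dimension bound then follows by bookkeeping: each of the $\dim\fX^p$ eigenlines produces one summand of ``cost'' $d$, and $d\cdot\dim\fX^p=\dim\big(\fX^p\oplus\fX^{\,{}_1p}\oplus\cdots\oplus\fX^{\,{}_{d-1}p}\big)\le\dim\fV^p\le\dim\fV$, so $\sum_{d,j}d\le\dim\fV$ after summing over classes.

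The delicate point, which I expect to be the main obstacle, is the boundary behaviour for the eventually straight classes $[\ell]$ and $[r]$. As in Example~\ref{pythag ex 1}, a vector contained in $\ell$ has its ``support'' on the left side of $t_\infty$, which every element of $F$ fixes; since $F_\ell=F$, this produces an \emph{extra} one-dimensional summand $\chi_\varphi^\ell$, while the off-axis part yields $\Ind_{F_{p\cdot b}}^F\chi_\varphi^{p\cdot b}$, reproducing precisely the paired form recorded in $R_{fin,1}$ (and symmetrically for $r$). Verifying that these two pieces are exactly the ones occurring, and that the redundancy $\chi_1^\ell=1_F=\chi_1^r$ forces the exclusion $\varphi\ne1$ in the $r$-family, is the part requiring the most care; it is essentially a careful rerun of the straight-line computation of Example~\ref{pythag ex 1} with the phase $\varphi$ carried through.

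For the converse (item~2) I would reverse the construction: given a target representation in $R_{fin,d}$ with period word $c$ and phase $\varphi$, build a $d$-dimensional Pythagorean pair by arranging a cyclic ``zig-zag'' of length $d$ supporting the ray $c^\infty$ and inserting the phase $\varphi$ (together with its conjugate, to preserve the isometry condition) into one matrix entry, exactly generalising the $\varphi,\psi$ of Example~\ref{pythag ex 1}; the straight-line cases use the $d=1$ model $A=1,B=0$ twisted by $\varphi$. One checks the Pythagorean identity $A^*A+B^*B=\id$ directly from the construction, and then applies item~1 to read off that the associated $\sigma_\fV$ is the prescribed representation, showing it is an \at{} Pythagorean representation.
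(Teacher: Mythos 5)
Your proposal is correct and follows essentially the same route as the paper's proof: reduction to finitely many periodic classes via Theorem \ref{complete scrH decomposition theorem} and Remark \ref{subspace fH orthogonal remark}, diagonalisation of the period unitary $E=\phi(p_d)\restriction_{\fX^p}$, identification of each eigenvector's cyclic sub-representation with $\Ind_{F_p}^F\chi_{\varphi}^p$ through its (trivially extended) matrix coefficient, the separate paired treatment of the straight classes $[\ell]$, $[r]$ with the extra one-dimensional summands, the same dimension bookkeeping, and the converse by the explicit construction generalising Example \ref{pythag ex 1}. The only step you compress is the paper's Claim 1 — that distinct eigenvectors generate mutually \emph{orthogonal} sub-representations, which requires a short computation using the eigenvalue relation and orthogonality of distinct rays, not just orthogonality in $\fH$ — but this is implicit in your plan and does not alter the approach.
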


\begin{proof}
	Consider $A,B,\fH,\sigma_\fV,\scrH_{\fV}$ as above and assume $\fV\neq\{0\}$, this later case being trivial. By Theorem \ref{complete scrH decomposition theorem}, $\sigma_\fV$ is a finite direct sum of representations of the form $\sigma_p$ where $p$ is a ray. 
	In addition, $\oplus_{p \in \cP} \fV^p \subset \fV$ and thus $\sum_{p \in \cP} \dim(\fV^p) \leq \dim(\fV)$.
	Subsequently, it is suffice to the prove the theorem for $\sigma_p$ where $\fV^p$ is non-trivial. Further, by Remark \ref{subspace fH orthogonal remark}, $p$ must be eventually periodic with length of period smaller or equal to $\dim(\fV^p)$. Since the representation $(\sigma_p, \scrH_{\fV^p})$ only depends on the equivalence class of the ray $p$, we can assume $p$ is periodic. Hence, there is a prime word $c$ with length $d \leq \dim(\fV^p)$ such that $p = c^\infty$ and define $\nu \in \Ver$ to be the vertex such that the path from $\varnothing$ to $\nu$ is given by $c$ (here we have identified $p$ as a sequence in $a,b$ that is read from right to left). Observe that $\nu$ lies in the ray $p$ and 
	$$\tau_{\nu^n}(p_{dm}\xi) = p_{d(m+n)}\xi \text{ for all } m,n \in \N, \xi \in \fH.$$
	
	Recall from Definition \ref{subspace of fH defintiion} that the subspace $\fX^p \subset \fV^p$ denotes the space of all vectors that are contained in the ray $p$ (and is also equal to $\Ran(\rho_p)$ by Corollary \ref{range rho inside fH corollary}). Define $E \in B(\fX^p)$ to be the restriction of the operator $\phi(p_d)$ on $\fX^p$. 
	It is the operator obtained from a period $c$ of the ray $p$. 
	Item \ref{item fX unitary operator} in Proposition \ref{alt subspace presentation prop} shows that this restriction is well-defined and $E$ is a unitary operator of $\fX^p$.
	
	By elementary spectral theory $E$ admits an orthonormal basis of eigenvectors $\{\xi_j\}_{j=1}^k$ for $\fX^p$ with corresponding unit eigenvalues $\{\gamma_j\}_{j=1}^k$ where $k = \dim(\fX^p)$. 
	This decomposition of $\fX^p$ into eigenspaces is the starting point of our proof that we can now outline.
	
	\begin{enumerate}
		\item Show that the set of eigenvectors $\{\xi_j\}_{j=1}^k$ induces orthogonal sub-representations of $\sigma_p$ (here we are identifying $\xi_j$ with its copy $[e, \xi_j]$ inside $\scrH_{\fV^p}$);
		\item Show that the above sub-representations completely decompose $\sigma_p$;
		\item Prove that the diagonal matrix coefficient $F \ni g \mapsto \langle \sigma_p(g)\xi_j, \xi_j \rangle$ associated to each $\xi_j$ is the trivial extension of a diagonal matrix coefficient of a representation of $F_p$.
	\end{enumerate}
			
	In the interest of clarity, we shall divide the main arguments of the proof into separate claims. 
	
	\textbf{Claim 1:} Each of the eigenvectors $\{\xi_j\}_{j=1}^k$ generate sub-representations of $\sigma_p$ which are orthogonal to each other.
	
	Consider two distinct eigenvectors $\xi_i, \xi_j$. We are required to show the closed linear spans of $\sigma_p(F)\xi_i, \sigma_p(F)\xi_j$ are orthogonal. We shall achieve this by performing a series of simplifications. Firstly, by linearity and continuity of the inner-product it is suffice to show $\langle \sigma_p(g)\xi_i, \sigma_p(g')\xi_j \rangle = 0$ for all $g, g' \in F$. By applying Equation \ref{action rearrange equation} and because $\xi_i, \xi_j$ are contained in the ray $p$ this is equivalent to showing $\langle \tau_\mu(\xi_i), \tau_\omega(\xi_j) \rangle = 0$ for all $\mu, \omega \in \Ver$. From Observation \ref{contained vector norm obs}, we only need to consider vertices $\mu, \omega \in \Ver_p$ which reduces the above inner-product to $\langle p_m\xi_i, p_n\xi_j \rangle$ for some $m,n \in \N^*$. Notice if $m-n \notin d\N$ (where recall $d$ is the length of a period $c$ of the ray $p$), then $p_m\xi_i, p_n\xi_j$ are contained in different rays and thus orthogonal by item \ref{item fX orthogonal} in Proposition \ref{subspace fH orthogonal prop}. Hence we only need to consider the case when $p_m = p_{r}E^u, p_n = p_{r}E^v$ for some $0 \leq r < d$ and $u,v\in \N$. Then we can conclude the claim by noting
	\[\langle p_{r}E^u\xi_i, p_{r}E^v\xi_j \rangle = \langle p_{d-r}p_{r}E^u\xi_i, p_{d-r}p_{r}E^v\xi_j \rangle = \langle E^{u+1}\xi_i, E^{v+1}\xi_j \rangle = \langle \gamma_i^{u+1}\xi_i, \gamma_j^{v+1}\xi_j \rangle = 0.\]

The above claim shows that each eigenvector $\xi_j$ generates $d$ orthogonal vectors $p_0\xi_j, \dots, p_{d-1}\xi_j$ in $\fV^p$ which are orthogonal to $p_n\xi_i$ for any $i \neq j$. This implies that $kd \leq \dim(\fV^p)$.

For the remainder of the proof we will consider two separate cases: $p$ is eventually straight or not.
Example \ref{pythag ex 1} illustrates the second case.
	
	\textbf{Claim 2:}
Suppose the ray $p$ is not eventually straight. Then the eigenvectors $\{\xi_j\}_{j=1}^k$ of $E$ generate the representation $(\sigma_p, \scrH_{\fV^p})$.
	
	Denote $\scrX$ to be the closed liner span of $\{\sigma_p(F)\xi_j\}_{j=1}^k = \{\sigma(F)\xi_j\}_{j=1}^k$.
	We are required to show that $\scrX = \scrH_{\fV^p}$. It is obvious that $\scrX \subset \scrH_{\fV^p}$. Hence, we only need to prove the reverse inclusion.
	
	To prove the reverse inclusion, we shall again perform a series of reductions to simplify the problem. 
	Observe $\scrH_{\fV^p}$ is equal to the closed linear span of $$\{\tau^*_\mu(\eta) : \eta \in \fV^p, \mu \in \Ver\}.$$ 
	Thus, it is sufficient to show that $\tau^*_\mu(\eta) \in \scrX$ for all $\eta \in \fV^p$, $\mu \in \Ver$. Fix a $\eta \in \fV^p$ and $\mu \in \Ver$. By item \ref{item fVp alt def 1} in Proposition \ref{alt subspace presentation prop} there exists a finite set of vectors $\{\eta_k\}_{k=1}^m$ which are eventually contained in $p$ and a finite set of vertices $\{\omega_k\}_{k=1}^m$ such that $\eta = \sum_{k=1}^m \tau^*_{\omega_k}(\eta_k)$. This gives	
	\[\tau^*_\mu(\eta) = \sum_{k=1}^m \tau^*_\mu\tau^*_{\omega_k}(\eta_k) = \sum_{k=1}^m \tau^*_{\mu \cdot \omega_k}(\eta_k)\]
	Hence, by linearity we only need to show that each of the terms $\tau^*_{\mu \cdot \omega}(\eta_\omega)$ are in $\scrX$. In particular, this will follow if we can show that $\tau^*_\omega(\zeta) \in \scrX$ for all $\omega \in \Ver$ and $\zeta$ is any vector eventually contained in the ray $p$. This is the statement that we shall prove directly.
	
	Fix a vertex $\omega \in \Ver$ and vector $\zeta$ that is eventually contained in $p$. Since the ray $p$ is periodic, $\zeta$ is contained in a ray $p \cdot w = p \cdot w'$ where $w' = c\cdot w$ and $w$ is some word in $\Mon(a,b)$ (recall $p = c^\infty$). Further, let $\omega' \in \Ver$ to be the child of $\omega$ such that the geodesic path from $\omega$ to $\omega'$ is given by $w'$. Then it follows by considering representatives that $\tau^*_\omega(\zeta) = \tau^*_{\omega'}(w'\zeta)$. Thus, $w'\zeta$ is contained in the ray $p$ and we have $w'\zeta \in \fX^p$. 
	
	Since $\{\xi_j\}_{j=1}^k$ forms an orthogonal basis for $\fX^p$, the vectors $w'\zeta$ and $\tau^*_\omega(\zeta)$ can be expressed in the form
	\[w'\zeta = \sum_{j=1}^k \alpha_j\xi_j, \quad \tau^*_\omega(\zeta) = \sum_{j=1}^k \alpha_j \tau^*_{\omega'}(\xi_j)\]
	where $\alpha_j \in \C$ for $j=1,\dots, k$. Recall each $\xi_j \in \fX^p$ is an eigenvector of $E$ with eigenvalue $\gamma_j \in \bS$ and note $E$ coincides with the restriction of $\tau_\nu$ on $\fX^p$.
	Thus, for each $j = 1, \dots, k$, we have
	\[\xi_j = \rho_{\nu}(\xi_j) = \tau^*_{\nu}(E\xi_j) = \gamma_j\tau^*_{\nu}(\xi_j).\]
	Since the ray $p$ is a not a straight line, the vertices $\omega'$ and $\nu$ lie in the centre of $t_\infty$. Hence, there exists $g := [t,s] \in F$ such that $\omega'$ and $\nu$ are corresponding vertices of $t$ and $s$, respectively. It then follows by Equation \ref{action rearrange equation}:
	\[\sigma(g)(\sum_{j=1}^k \alpha_j \gamma_j^{-1} \xi_j) = \sigma(g)(\sum_{j=1}^k \alpha_j \tau^*_{\nu}(\xi_j)) = \sum_{j=1}^k\alpha_j \tau^*_{\omega'}(\xi_j) = \tau^*_\omega(\zeta).\]
	The above equalities show that $\tau^*_\omega(\zeta) \in \scrX$ from which we can conclude the claim. 
	
	Combining Claims $1$ and $2$ we obtain that the eigenvectors $\{\xi_j\}_{j=1}^k$ completely decompose $\sigma_p$ into orthogonal cyclic sub-representations. The only remaining task for this case is to show that the sub-representation induced by each eigenvector is equivalent to a monomial representation associated to $F_p$.

	To this end, from here on, $\xi$ shall refer to an eigenvector in $\{\xi_j\}_{j=1}^k$ with associated eigenvalue $\gamma$ and denote $\theta_p$ to be the cyclic representation of $F_p$ given by the sub-representation of $\sigma_p\restriction_{F_p}$ (the restriction of $\sigma_p$ to the subgroup $F_p$) generated by $\xi$. Further, for the remainder of the proof set $\varphi$ to be a $d$th root of $\gamma$ (define $\varphi_j$ similarly) and recall $\chi_{\varphi}^p$ is a one-dimensional representation of $F_p$.
	Note, up to unitary equivalence, $\chi_\varphi^p$ does not depend on the choice of $d$th root of $\gamma$.
	
	\textbf{Claim 3:} If the ray $p$ is not eventually straight then the representation $\theta_p$ of $F_p$ generated by $\xi$ is unitarily equivalent to $\chi_{\varphi}^p$.
	
	We shall make use of the description of $F_p$ using tree-diagrams as discussed in Subsection \ref{parabolic desc subsection}.
	Let $g := [t,s] \in F_p$ with corresponding leaves $\omega$ and $\nu^n\omega$ where $\omega \in \Ver_{p}$ and $n \in \N$. First suppose $\omega \in \Leaf(t)$ and $\nu^n\omega \in \Leaf(s)$. We then have by Equation \ref{action rearrange equation}:
	\begin{equation} \label{chi eq1}
		\sigma_p(g)\xi = \tau^*_\omega \tau_{\nu^n\omega}(\xi) = \rho_\omega \tau_{\nu^n}(\xi) = E^n(\xi) = \gamma^{n} \xi.
	\end{equation}
	Alternatively, suppose $\nu^n\omega \in \Leaf(t)$ and $\omega \in \Leaf(s)$. Similarly, we then have
	\begin{equation} \label{chi eq2}
		\sigma_p(g)\xi = \tau^*_{\nu^n\omega}\tau_\omega(\xi) = \tau^*_{\nu^n}(\xi) = \gamma^{-n}\xi.
	\end{equation}
	
	Equations \ref{chi eq1} and \ref{chi eq2} show $\theta_p$ is a one-dimensional representation and the claim follows from these two equations. Indeed, in the case of Equation \ref{chi eq1} we have $g'(p) = 2^{dn}$ and thus
	\[\theta_p(g)\xi = \sigma_p(g)\xi = \gamma^n\xi = \varphi^{\log_2(2^{dn})}\xi = \varphi^{\log_2g'(p)}\xi = \chi_{\varphi}^p(g)\xi.\]
	In the case of Equation \ref{chi eq2} we have $g'(p) = 2^{-dn}$ and thus
	\[\theta_p(g)\xi = \sigma_p(g)\xi = \gamma^{-n}\xi = \varphi^{\log_2(2^{-dn})}\xi = \varphi^{\log_2g'(p)}\xi = \chi_{\varphi}^p(g)\xi.\]	 
	This proves the claim since $\xi$ is a cyclic vector for $\theta_p$.
	
	Now consider $g := [t,s] \in F$ and set $\Leaf(t) = \{\nu_i\}_{i \in I}$ and $\Leaf(s) = \{\omega_i\}_{i \in I}$. Let $\nu_k, \omega_l$ be the leaves of $t$ and $s$, respectively, which the ray $p$ passes through. Denote $\phi_\xi : F \ni g \mapsto \langle \sigma_p(g)\xi, \xi\rangle$ to be the diagonal matrix coefficient associated to the vector $\xi$. 
	
	\textbf{Claim 4:} If $g \notin F_p$, then $\phi_\xi(g) = 0$.
	
	By the discussion in Subsection \ref{parabolic desc subsection}, $g \notin F_p$ if and only if $k \neq l$ (equivalently, $\nu_k$ and $\omega_l$ are not corresponding leaves of $g$) or $k = l$ and Equation \ref{parabolic subgroup condition eqn} is not satisfied. First suppose $k \neq l$. 	
	Then by Equation \ref{action rearrange equation}:
	\begin{align*}
		\phi_\xi(g) &= \langle \sigma_p(g)\xi, \xi \rangle 
		= \sum_{i \in I} \langle \tau_{\omega_i}(\xi), \tau_{\nu_i}(\xi) \rangle \\
		&= \sum_{i \neq k,l} \langle \tau_{\omega_i}(\xi), \tau_{\nu_i}(\xi) \rangle + \langle \tau_{\omega_k}(\xi), \tau_{\nu_k}(\xi) \rangle + \langle \tau_{\omega_l}(\xi), \tau_{\nu_l}(\xi) \rangle.  
	\end{align*}
	From Observation \ref{contained vector norm obs}, $\tau_{\nu_i}(\xi) = 0$ for $i \neq k$ since $\nu_i \notin \Ver_{p}$. Similarly $\tau_{\omega_j}(\xi) = 0$ for $j \neq l$. Thus, each of the terms in the above equation is $0$ and we obtain $\phi_\xi(g) = 0$. 
	
	Then suppose $k = l$ and Equation \ref{parabolic subgroup condition eqn} is not satisfied for $m = \length(\nu_k)$ and $n = \length(\omega_l)$. Subsequently, $\nu_k, \omega_l$ are corresponding leaves and $m-n \notin d\N$. Then by a similar reasoning as before we have
	\begin{equation*}
		\phi_\xi(g) = \sum_{i \neq k} \langle \tau_{\omega_i}(\xi), \tau_{\nu_i}(\xi) \rangle + \langle \tau_{\nu_k}(\xi), \tau_{\omega_k}(\xi) \rangle = \langle p_n\xi, p_m\xi \rangle.
	\end{equation*}
	Since $m-n \notin d\N$ and the length of a period of $p$ is $d$ it follows that $p_n\xi, p_m\xi$ are vectors contained in different rays. Therefore $\langle p_n\xi, p_m\xi \rangle = 0$ by item \ref{item fX orthogonal} in Proposition \ref{subspace fH orthogonal prop} and thus $\phi_\xi(g) = 0$ which proves the claim.
		
	\textbf{Claim 5:} If the ray $p$ is not eventually straight then the sub-representation of $\sigma_p$ generated by $\xi$ is unitarily equivalent to $\Ind_{F_p}^F \chi_{\varphi}^p$.
	
	Let $\phi'_\xi$ be the diagonal matrix coefficient of $\chi_{\varphi}^p$ associated with $\xi$. 
	From Claim $3$, $\chi_\varphi^p \cong \theta_p$ which gives $\phi_\xi(g) = \phi'_\xi(g)$ for all $g \in F_p$. Hence, Claim $4$ shows that $\phi_\xi$ is the trivial extension of $\phi'_\xi$. Therefore, the cyclic sub-representation of $\sigma_p$ generated by $\xi$ is equivalent to the induced representation $\Ind_{F_p}^F \chi_{\varphi}^p$ which proves the claim.

	From the above claims we have shown that if $p$ is not eventually straight then
	\[\sigma_p \cong \oplus_{j=1}^k \Ind_{F_p}^F \chi_{\varphi_j}^p.\]
	As observed earlier we can assume that $0 \leq \textrm{Arg}(\varphi_j) < 2\pi/d$. Hence,  as required, each of the sub-representations $\Ind_{F_p}^F \chi_{\varphi_j}^p$ belongs in the set $R_{fin, d}$ and we have $\sum_{d,j} d = kd \leq \dim(\fV^p)$ where the index set $J_d$ is $\{1, \dots, k\}$.
	
	{\bf We shall now treat the case when $p$ is eventually straight.} There are precisely two equivalence classes of rays which are eventually straight; one which consists of all rays in the form $\ell \cdot w$ and another one which consist of all rays in the form $r \cdot w$ where $w$ is any word in $\Mon(a,b)$. Thus, without loss of generality, we shall assume $p$ is either equal to the periodic ray $\ell$ or $r$.
	In this case the analysis will be similar to the initial case; however, now the eigenvectors $\{\xi_j\}_{j=1}^k$ (thus the subspace $\fX^p = \Ran(\rho_p)$) will no longer generate the entire representation $\sigma_p$ as an additional one-dimensional representation will also need to be considered in the decomposition of $\sigma_p$ (compare this to the analysis of $\sigma_\ell$ in Example \ref{pythag ex 1}).
	
	\textbf{Claim 6.}
	If $p = \ell$ (resp. $p = r$) then the elements $\{\xi_j, \tau^*_1(\xi_j)\}_{j=1}^k$ (resp. $\{\xi_j, \tau^*_0(\xi_j)\}_{j=1}^k$) generate sub-representations of $\sigma_p$ which are orthogonal to each other.
	
	First let $p = \ell$. 
	From Claim $1$ we know the sub-representations generated by $\{\xi_j\}_{j=1}^k$ are orthogonal and slightly modifying the same proof shows that the sub-representations generated by $\{\tau^*_1(\xi_j)\}_{j=1}^k$ are also orthogonal to each other. Thus, we only need to show that the sub-representations generated by $\xi_i$ and $\tau^*_1(\xi_j)$ are orthogonal for $1 \leq i,j \leq k$.	
	It is then sufficient to show $\langle \sigma_p(g)\xi_i, \tau^*_1(\xi_j) \rangle = 0$ for all $g \in F$. This is indeed true because $\sigma_p(g)\xi_i = \gamma_i^n\xi_i$ for some $n \in \Z$ and belongs in $\Ran(\rho_\ell)$ while $\tau^*_1(\xi_j)$ belongs in $\Ran(\rho_{\ell \cdot b})$. Then we can conclude by the proof of item \ref{item fX orthogonal} in Proposition \ref{subspace fH orthogonal prop}.

	The case when $p = r$ follows the same proof after interchanging $a$ with $b$ and $0$ with $1$. This completes the proof of the claim.
		
	When $p$ is not eventually straight, Claim $3$ shows that the space $\fX^p$ (which is equal to $\Ran(\rho_p)$) generates the representation $\sigma_p$.
	When $p$ is a straight line then this is no longer true. Rather, we shall now show that 
	when $p = \ell$ (resp. $p = r$) then the two subspaces $\fX^p$ and $\tau^*_1(\fX^p) = \Ran(\rho_{p\cdot b})$ (resp. $\fX^p$ and $\tau^*_0(\fX^p) = \Ran(\rho_{p \cdot a})$) generate $\sigma_p$.
	Note, the equality between the subspaces holds from item i in Remark \ref{proj range in fH remark}.
	
	\textbf{Claim 7.}
	If $p = \ell$ (resp. $p = r$) then the elements $\{\xi_j, \tau^*_1(\xi_j)\}_{j=1}^k$ (resp. $\{\xi_j, \tau^*_0(\xi_j)\}_{j=1}^k$) generate the representation $(\sigma_p, \scrH_{\fV^p})$.
	
	First suppose $p = \ell$.  In this case $c = a, d=1$, $\nu = 0$ and $E = A\restriction_{\fX^p}$.
	Denote $\scrX$ to be the closed liner span of $\{\sigma(F)\xi_j, \sigma(F)\tau^*_1(\xi_j)\}_{j=1}^k$. By performing the same series of reductions as done in the proof of Claim $2$, it is suffice to show that if $\omega$ is any vertex in $\Ver \backslash \{\varnothing\}$ and $\zeta$ is any vector that is eventually contained in the ray $p$ then $\tau^*_\omega(\zeta) \in \scrX$. Let $w$ be a word in $\Mon(a,b)$ such that $\zeta$ is contained in the ray $p \cdot w$ and as before set $w' = c \cdot w$.
	
	Suppose $\zeta \notin \fX^p$. Then the word $w'$ must contain the letter $b$. As in Claim $2$, define $\omega' \in \Ver$ to be the child of $\omega$ such that the geodesic path from $\omega$ to $\omega'$ is given by $w'$. Since $w'$ contains the letters $a$ and $b$, the vertex $\omega'$ must lie in the centre of $t_\infty$. Further, $\tau^*_\omega(\zeta) = \tau^*_{\omega'}(w'\zeta)$ where $w'\zeta \in \fX^p$. Then since $\{\xi_j\}_{j =1}^k$ forms an orthonormal basis for $\fX^p$ we have
	\[w'\zeta = \sum_{j=1}^k \alpha_j\xi_j, \quad \tau^*_\omega(\zeta) = \sum_{j=1}^k \alpha_j \tau^*_{\omega'}(\xi_j)\]
	where $\alpha_j \in \C$ for $j=1,\dots, k$. As well we have
	\[\tau^*_1(\xi_j) = \gamma_j\tau^*_{10}(\xi_j).\]
	Observe that the vertices $\omega'$ and $10$ lie in the centre of $t_\infty$. Then by continuing this argument in the same manner as in Claim $2$, but now replacing $\xi_j$ with $\tau^*_1(\xi_j)$, we can conclude that $\tau^*_\omega(\zeta) \in \scrX$ as required.
	
	Now suppose $\zeta \in \fX^p$. If $\omega$ does not lie on the left side of $t_\infty$ then this will imply that $\omega'$ lies in the centre of $t_\infty$ and subsequently we can follow the same argument presented in the above paragraph. Hence, suppose $\omega$ lies on the left side of $t_\infty$. In this case, it turns out $\tau^*_\omega(\zeta)$ does not belong to the subspace generated by $\{\sigma(F)\tau^*_1(\xi_j)\}_{j=1}^k$ but instead belongs to the subspace generated by $\{\sigma(F)\xi_j\}_{j=1}^k$. We shall show this in the following. 
	
	Since $\zeta \in \fX^p$, in a similar fashion as above we can write
	\[\zeta = \sum_{j=1}^{k} \alpha_j'\xi_j, \quad \tau^*_\omega(\zeta) = \sum_{j=1}^{k} \alpha_j' \tau^*_\omega(\xi_j).\]
	Since $\omega, 0$ lie on the left side of $t_\infty$, there exist a $g := [t, s] \in F$ such that $\omega, 0$ are corresponding vertices of $t,s$, respectively. We then have
	\[\sigma(g)(\sum_{j=1}^{k} \alpha_j' \gamma_j^{-1}\xi_j) = \sigma(g)(\sum_{j=1}^{k} \alpha_j' \tau^*_0(\xi_j)) = \sum_{j=1}^{k} \alpha_j' \tau^*_\omega(\xi_j) = \tau^*_\omega(\zeta).\] 
	This proves the required result for when $p = \ell$.
	
	When $p = r$ the proof follows similarly by interchanging left with right and $0$ with $1$. This finishes the proof of the claim.
	
	The only remaining task for this case is to show that the sub-representation generated by each of the elements in the above claim are equivalent to one of the representations specified in the theorem. 

	Once again, let $\xi$ refer to an eigenvector in $\{\xi_j\}_{j=1}^k$ with associated eigenvalue $\gamma$. Now, denote $\theta_p$ to be the cyclic representation of $F_p$ given by the sub-representation of $\sigma_p\restriction_{F_p}$ generated by $\tau^*_1(\xi)$ (resp. $\tau^*_0(\xi)$) if $p = \ell$ (resp. $p = r$). Further, recall $\chi_{\gamma}^p$ is a one-dimensional representation of $F$.
	
	\textbf{Claim 8:}
	If $p = \ell$ (resp. $p = r$) then the sub-representation $\sigma_p$ generated by $\tau^*_1(\xi)$ (resp. $\tau^*_0(\xi)$) is unitarily equivalent to $\Ind_{F_{p}}^F\chi_\gamma^p$. Further, the sub-representation of $\sigma_p$ generated by $\xi$ is unitarily equivalent to $\chi_\gamma^p$. 
	
	Suppose $p = \ell$. We shall first show that the sub-representation of $\sigma_p$ generated by $\xi$ is equivalent to the representation $\chi_{\gamma}$ of $F$. 
	
	Let $g := [t, s] \in F$ and let $\mu, \omega$ be the first vertex of $t,s$ with lengths $m,n$, respectively. Then by Equation \ref{action rearrange equation} we obtain
	\[\sigma(g)\xi = \tau^*_\mu\tau_\omega(\xi) = \gamma^{n-m}\xi = \gamma^{\log_2g'(\ell)}\xi = \chi_{\gamma}^p(g)\xi.\]
	This proves the claim for $\xi$. 
	
	Next, we shall show that the representation $\theta_p$ of $F_p$ is equivalent to $\chi_\gamma^p$. The proof follows a similar calculation for when $p$ is not eventually periodic. Indeed, Equations \ref{chi eq1} and \ref{chi eq2} from the proof of Claim $3$ continue to hold after replacing $\omega$, $\nu^n\omega$, $\xi$ with $1\cdot \omega$, $1 \cdot \nu^n\omega$, $\tau^*_1(\xi)$, respectively. Subsequently, the equalities at the end of the proof of Claim $3$ remain true after replacing $\xi$ with $\tau^*_1(\xi)$ and it follows that $\theta_p \cong \chi_\gamma^p$. 

	Now let $\phi_\xi$ be the diagonal matrix coefficient of $\sigma_p$ associated to $\tau^*_1(\xi)$. By following a similar reasoning as in Claim $4$ we can show that $\phi_\xi(g) = 0$ if $g \notin F_p$. Then applying the proof of Claim $5$ to $\phi_\xi$ proves that $\phi_\xi$ is the trivial extension of the matrix coefficient of a cyclic vector associated to the representation $\chi_\gamma^p$ of $F_p$. Therefore, the sub-representation of $\sigma_p$ generated by $\tau^*_1(\xi)$ is equivalent to $\Ind_{F_{p \cdot b}}^F \chi_\gamma^{p \cdot b}$. This proves the claim for when $p = \ell$.	
	
	The remaining case is when $p = r$. This however follows from the above case when $p = \ell$ by interchanging left with right, $1$ with $0$, $b$ with $a$ and by instead taking $\mu, \omega$ to be the last vertex of $t,s$, respectively. This completes the proof of the claim.
	
	From the above claims we have shown that if $p$ is eventually straight then:
	\[\sigma_p \cong \oplus_{i=1}^k \chi_{\gamma_i}^q \bigoplus \oplus_{j=1}^k \Ind_{F_{p \cdot x}}^F \chi_{\gamma_j}^{p \cdot x}\]
	where $q = \ell, x = b$ if $[p] = [\ell]$ and $q = r, x = a$ if $[p] = [r]$. In particular, the above representation is contained in the set $R_{fin, 1}$ and again have $\sum_{1,j} 1 = k \leq \dim(\fV^k)$.\\
	Thus, we have shown that $\sigma_\fV$ can be decomposed into representations arising from the set $\cup_{d=1}^n R_{fin, d}$ which proves the first part of the theorem. Conversely, from the above proof it is instructive that we can construct an \at{} Pythagorean pair such that its associated Pythagorean representation is equivalent to any representation in $\cup_{d \geq 1} R_{fin, d}$.
\end{proof}

\begin{remark}\label{product rep remark}
	\begin{enumerate}[i]
		\item Note, by applying Proposition \ref{pythag invariant decomp prop} we could have additionally assumed that $\fV^p$ is irreducible under the monoid action $\phi$. Under this additional assumption, it is easy to see this would imply that $\fX^p$ is one-dimensional and thus $E$ only has a single eigenvector. This would have led to a shorter proof of the above theorem as Claim $1$ would no longer be required and Claim $2$ could be simplified. However, we have decided to forget this simplification because it does not carry to when $\fH$ is infinite-dimensional and the proof of the decomposition of $\sigma_p$ in this case (Theorem \ref{sigma_p decompose infinite theorem}) will require Claims $1$ and $2$.
		\item Interestingly, the proof in the above theorem shows that the sub-representation $\sigma_p \cong \oplus_{j \in J} \Ind_{F_p}^F \chi_{\varphi_j}^p$ only depends on the eigenvalues of the operator $E$ (for ease of notation, we will only refer to the case when $p$ is not eventually straight; however, the following also applies to when $p$ is eventually straight). This implies that the behaviour of $\sigma_p$ is not determined by the individual operators $\phi(x_i)$ but only on the product of the operators $\phi(p_d) = \phi(x_d) \dots \phi(x_2)\phi(x_1)$ restricted to $\fX^p$ (where we take $x_i$ to be the $i$th letter of $p$ and as before, we assume $p$ is periodic).  
\end{enumerate}	
\end{remark}

\subsection{Decomposition of $\sigma_\fV$ for infinite-dimensional $\fH$}

We now consider the decomposition of $\sigma_\fV$ when $\fH$ is a separable infinite-dimensional Hilbert space. 
As in the finite-dimensional case we can do it ray by ray. 
However, now we may have aperiodic rays and isometries rather than unitaries for the map $E$. Moreover, we will use direct integral rather than direct sums to decompose the actions of $E$.
Finally, note that the range of the projection $\rho_p$ is not necessarily contained inside $\fH$.

\begin{theorem} \label{sigma_p decompose infinite theorem}
	Let $(A,B)$ be a Pythagorean pair over a separable infinite-dimensional Hilbert space $\fH$ and consider the sub-representation $(\sigma_\fV, \scrH_{\fV})$ of $(\sigma_{A,B}, \scrH_{A,B})$. 
	
	\begin{enumerate}
	\item The representation $\sigma_\fV$ can be decomposed as a direct integral of representations belonging to the set $\cup_{d \geq 1} R_{fin, d}$ and a countable direct sum of representations belonging to the set $(\cup_{d \geq 1} R_{inf, d}) \cup R_{inf, \infty}.$
	\item Conversely, any representation as described above is an \at{} Pythagorean representation.
	\end{enumerate}
\end{theorem}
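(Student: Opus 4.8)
The plan is to reduce to a single ray class and then replace the eigenvector/direct-sum bookkeeping of Theorem \ref{sigma_p decompose finite theorem} by a spectral-plus-Wold analysis that tolerates both continuous spectrum and non-surjective isometries. By Theorem \ref{complete scrH decomposition theorem} we have $\scrH_\fV=\oplus_{[p]\in\cP}\scrH_{\fV^p}$, and since the $\fV^p$ are pairwise orthogonal and $\fH$ is separable, only countably many are non-zero; hence it suffices to decompose each $\sigma_p$ and reassemble. Fix a class $[p]$ with $\fV^p\neq\{0\}$; when $p$ is eventually periodic I replace it by a periodic representative $p=c^\infty$ of prime period length $d$ and let $\nu$ be the vertex with path $c$, while the not-eventually-periodic case (where there is no period) is handled by the degenerate variant described below.

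The first step is to upgrade the isometry $E=\phi(p_d)\restriction_{\fX^p}$ to a \emph{unitary} on the larger space $\Ran(\rho_p)\subset\scrH$. Indeed, on $\Ran(\rho_p)$ one has $\tau_\nu\tau_\nu^*=\id$ and $\tau_\nu^*\tau_\nu=\rho_\nu=\id$ (as $\Ran(\rho_p)\subset\Ran(\rho_\nu)$), and since $p=c\cdot p$ both $\tau_\nu$ and $\tau_\nu^*$ preserve $\Ran(\rho_p)$; thus $\hat E:=\tau_\nu\restriction_{\Ran(\rho_p)}$ is unitary with inverse $\tau_\nu^*$, extending $E$ (which may fail to be onto, unlike the finite-dimensional setting of Proposition \ref{alt subspace presentation prop}\ref{item fX unitary operator}). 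Reading off Equations \ref{chi eq1}--\ref{chi eq2} and the tree-diagram description of Subsection \ref{parabolic desc subsection}, the subspace $\Ran(\rho_p)$ is $F_p$-invariant, $\widehat{F_p}$ acts trivially, and $F_p/\widehat{F_p}\cong\Z$ (Remark \ref{subgroup of Fp remark}) acts by powers of $\hat E$: for $g\in F_p$ with $g'(p)=2^{dn}$ one has $\sigma(g)\restriction_{\Ran(\rho_p)}=\hat E^{\,n}$. Combining this with the orthogonality $\Ran(\rho_{g(p)})\perp\Ran(\rho_{g'(p)})$ for $g,g'$ in distinct $F_p$-cosets (Proposition \ref{subspace fH orthogonal prop}\ref{item fX orthogonal}) and with the fact that $\fX^p$ already generates $\scrH_{\fV^p}$ when $p$ is not eventually straight (as in Claim 2 of Theorem \ref{sigma_p decompose finite theorem}, or directly from $[e,\zeta]=\tau^*_{\mu^p_k}(p_k\zeta)$ and Notation \ref{match vertices}), I obtain
\[\sigma_p\cong\Ind_{F_p}^F\big(\pi_p\big),\]
where $\pi_p$ is the representation of $F_p$ on $\Ran(\rho_p)$ factoring through the unitary $\hat E$.

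It then remains to decompose the single unitary $\hat E$ and to induce. I would apply the Wold decomposition $\fX^p=\fX^p_u\oplus\bigoplus_{n\ge0}E^n\mathcal L$ with $E_u$ unitary and $\mathcal L$ the wandering subspace; concretely $\hat E$ leaves $\fX^p_u$ invariant and restricts there to $E_u$, whereas on the $\hat E^{\Z}$-span of $\mathcal L$ it is a bilateral shift of multiplicity $\dim\mathcal L$ (countable by separability). On the unitary part the spectral theorem writes $E_u$ as a direct integral of characters of $\Z$; pulled back to $F_p$ these are the $\chi_\varphi^p$ (a spectral value $\gamma$ corresponding to a $d$-th root $\varphi$, as in the finite-dimensional computation), so Proposition \ref{direct integral induced prop} gives a direct integral of $\Ind_{F_p}^F\chi_\varphi^p\in R_{fin,d}$. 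On the shift part, each unit wandering vector $\eta$ has a bilateral orthonormal orbit $\{\hat E^n\eta:n\in\Z\}$ on which $F_p$ acts through $F_p/\widehat{F_p}\cong\Z$ as the regular representation, so $\Ind_{F_p}^F\lambda_{F_p/\widehat{F_p}}=\lambda_{F/\widehat{F_p}}\in R_{inf,d}$, contributing a countable direct sum of copies of $\lambda_{F/\widehat{F_p}}$; these orbits genuinely leave $\fH$, living in $\Ran(\rho_p)\setminus\fH$, which is exactly why this summand is absent in the finite-dimensional theorem. When $p$ is not eventually periodic, $F_p=\widehat{F_p}$ by Equation \ref{irrational stab group equation} and an orthonormal basis of $\fX^p$ generates a countable direct sum of copies of $\lambda_{F/F_p}\in R_{inf,\infty}$ (orthogonality holding since the truncations ${}_np$ are pairwise distinct rays). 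The eventually straight classes $[\ell],[r]$ need the parallel but modified treatment of Claims 6--8 of Theorem \ref{sigma_p decompose finite theorem}: there $\fX^p$ does not generate $\scrH_{\fV^p}$ and one adjoins $\Ran(\rho_{p\cdot b})=\tau^*_1(\fX^p)$ (resp.\ $\Ran(\rho_{p\cdot a})$), producing the paired blocks $\chi_\varphi^\ell\oplus\Ind_{F_{p\cdot b}}^F\chi_\varphi^{p\cdot b}\in R_{fin,1}$ and $\lambda_{F/\widehat{F_\ell}}\oplus\lambda_{F/\widehat{F_{p\cdot b}}}\in R_{inf,1}$, with the $\varphi\neq1$ exclusions accounting for the trivial-representation coincidences noted in the introduction. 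Summing over the countably many classes, and absorbing the atomic part of the spectral measure of each $E_u$ into the direct integral (a direct sum being a direct integral over a counting measure), yields the asserted form.

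For the converse I would use that Pythagorean pairs are closed under direct sums and direct integrals (take the corresponding direct sum/integral of the underlying pairs), so it suffices to realise each building block individually. Those in $\cup_{d\ge1}R_{fin,d}$ are realised by finite-dimensional pairs via the converse half of Theorem \ref{sigma_p decompose finite theorem}; a block $\lambda_{F/\widehat{F_p}}\in R_{inf,d}$ arises by taking $E$ to be a unilateral shift of the prescribed multiplicity on $\fX^p$ (so the Wold analysis above returns exactly $\lambda_{F/\widehat{F_p}}$), and $\lambda_{F/F_p}\in R_{inf,\infty}$ by the analogous construction along a not-eventually-periodic ray. I expect the main obstacle to be making rigorous the passage from the isometry $E$ on $\fX^p\subset\fH$ to the unitary $\hat E$ on $\Ran(\rho_p)\subset\scrH$ and, in particular, identifying the shift part of $E$ with the quasi-regular representation $\lambda_{F/\widehat{F_p}}$: one must carefully track vectors that leave the small space $\fH$, verify that the bilateral orbits are orthonormal and total in their summand, and match the continuous spectrum of $\hat E$ with the direct integral of the $\Ind_{F_p}^F\chi_\varphi^p$ through Proposition \ref{direct integral induced prop}.
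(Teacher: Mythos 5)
Your proposal is correct and rests on the same three pillars as the paper's proof — the Wold decomposition of the isometry $E=\phi(p_d)\restriction_{\fX^p}$, the spectral theorem for its unitary part, and the compatibility of induction with direct integrals (Proposition \ref{direct integral induced prop}) — but you organize them along a genuinely different route. The paper stays at the vector level: it adapts Claims 1--5 of Theorem \ref{sigma_p decompose finite theorem}, identifying the shift part of $E$ with copies of $\lambda_{F/\widehat{F_p}}$ by explicitly constructing lifted orthonormal bases level by level (as in its Claim 2) and recognizing induced representations through trivial extensions of diagonal matrix coefficients. You instead first establish the structural isomorphism $\sigma_p\cong\Ind_{F_p}^F\pi_p$, where $\pi_p$ is the $F_p$-representation on $\Ran(\rho_p)\cap\scrH_{\fV^p}$ generated by $\hat E=\tau_\nu\restriction_{\Ran(\rho_p)}$; your observation that $\hat E$ is genuinely unitary (with inverse $\tau_\nu^*$, since $\tau_\nu\tau_\nu^*=\id$ and $\rho_\nu=\id$ on $\Ran(\rho_p)$, both preserved by periodicity of $p$) is correct, and is essentially the device the paper itself uses for $\sigma_\fW$ in Theorem \ref{sigma_fW decompose theorem}. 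With that structure in hand, the $\lambda_{F/\widehat{F_p}}$ summands become transparent: the bilateral extension of the wandering part is the regular representation of $F_p/\widehat{F_p}\cong\Z$, and induction in stages gives $\Ind_{F_p}^F\lambda_{F_p/\widehat{F_p}}=\lambda_{F/\widehat{F_p}}$, whereas the paper must rerun its generation arguments to see this. What your route buys is conceptual clarity and a single decomposition statement covering all rays; what the paper's route buys is that all objects stay anchored in $\fH$ and in the concrete claims already proved for the finite-dimensional case. One caveat: in the not-eventually-periodic case your summary sentence asserts that an orthonormal basis of $\fX^p$ generates the countable sum of copies of $\lambda_{F/F_p}$; taken literally this can fail when the level isometries $\phi(x_i):\fX^{{}_{i-1}p}\to\fX^{{}_ip}$ are not surjective, since vectors in $\tau_{p,i}^*\bigl(\fX^{{}_ip}\ominus x_i\fX^{{}_{i-1}p}\bigr)$ are orthogonal to the subrepresentation generated by $\fX^p$ (this is exactly why the paper's Claim 2 adjoins bases of those complements at every level). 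The correct generating space is $\Ran(\rho_p)\cap\scrH_{\fV^p}$, which your own induced-representation framework supplies automatically, so this is a slip of wording rather than of substance.
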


\begin{proof}
	As in the proof of the Theorem \ref{sigma_p decompose finite theorem} it is suffice to only consider the sub-representation $\sigma_p$ where $p$ is a ray and $\fV^p$ is non-trivial.
	We examine separately the aperiodic and eventually periodic case and split the proof in a number of claims.
	
	\textbf{Claim 1.} If the ray $p$ is not eventually periodic and $\xi \in \fX^p$ then the sub-representation $\sigma_p$ generated by $\xi$ is equivalent to $\lambda_{F/F_p}$.
	
	Once again, let $\theta_p$ denote the sub-representation of $\sigma_p \restriction_{F_p}$ generated by $\xi$. 
	From Equation \ref{irrational stab group equation}, if $p$ is not eventually periodic and $g := [t,s] \in F_p$ then the leaves of $t$ and $s$ whose sdi contains $p$ must coincide and thus $g$ fixes a sdi containing $p$.
	From there, it is easy to deduce that $\sigma(g)\xi=\xi$ for all $g\in F_p$ and thus $\theta_p$ is a copy of the trivial representation of $F_p$. Then applying the same arguments used in Claims $4$ and $5$ in the proof of Theorem \ref{sigma_p decompose finite theorem} shows that the sub-representation of $\sigma_p$ generated by $\xi$ is equivalent to $\Ind_{F_p}^F 1_{F_p} = \lambda_{F/F_p}$.
		
	\textbf{Claim 2.} If the ray $p$ is not eventually periodic, then $\sigma_p$ is unitary equivalent to a countable direct sum of $\lambda_{F/F_{p}} \in R_{inf, \infty}$.
	
	Let $\{\xi_k\}_{k \in K_0}$ be an orthonormal basis for $\fX^p$ and let $x_i$ denote the $i$th letter of $p$ (here we identify $p$ with an infinite sequence in $a,b$). The map $\phi(x_1)$ is an isometry from $\fX^p$ to $\fX^{_1p}$ and thus the set of vectors $\{x_1\xi_k\}_{k \in K_0}$ forms an orthogonal set in $\fX^{_1p}$. Let $\{\xi_k\}_{k \in K_1}$ be an orthonormal basis for the orthogonal complement of $x_1(\fX^p)$ inside $\fX^{_1p}$. Iteratively, in a similar fashion define the sets of vectors $\{\xi_k\}_{K_i}$ for $i \in \N$ and take $K = \cup_{i \in \N} K_i$. For $k \in K_i$, define $\eta_k := \tau^*_{p,i}(\xi_k)$ where recall the definition of $\tau^*_{p,i}$ from Notation \ref{tau notation}. Observe $\{\eta_k\}_{k \in K}$ forms an orthogonal set in $\scrH_{\fV^p}$.
	
	By Claim $1$, each of the vectors $\eta_k$ generate a sub-representation $\lambda_{F/F_{p^{(k)}}}$ contained in $\sigma_p$ (where if $k \in K_i$ then $p^{(k)} = {}_ip$ which is not an eventually periodic ray and is equivalent to $p$). 	
	Since the vectors $\{\eta_k\}_{k \in K}$ are pair-wise orthogonal and the restriction of the maps $\phi(x_{i+1})$ on $\fX^{{}_ip}$ are isometries, it is easy to deduce that each of the sub-representations $\lambda_{F/F_{p^{(k)}}}$ are orthogonal to each other.
	
	Next, we wish to show that the vectors $\{\eta_k\}_{k \in K}$ generate $\sigma_p$. Denote $\scrX$ to be the subspace generated by these vectors. We are required to prove $\scrX = \scrH_{\fV^p}$. The forward inclusion is clear, thus we only need to prove the reverse inclusion.
	
	By performing a similar reduction as in Claim $2$ in the proof of Theorem \ref{sigma_p decompose finite theorem} it is sufficient to show that $\tau^*_\omega(\zeta) \in \scrX$ for all $\omega \in \Ver$ and for all vectors $\zeta$ eventually contained in $p$. Fix such a vertex $\omega$ and vector $\zeta$ contained in a ray $q$ belonging to the equivalence class of $p$. There exists $m,n \in \N$ such that $_mp = {}_nq$. Note $_{m+i}p = {}_{n+i}q$ for all $i \in \N$. Hence, since $q$ is not eventually periodic, $m$ and $n$ can be taken large enough such that the word $q_n$ contains at least one $a$ and $b$. Then the vector $q_n\zeta \in \fX^{_mp}$. Define the vertex $\omega'$ to be the child of $\omega$ such that the geodesic path from $\omega$ to $\omega'$ is given by $q_n$. Further, define $\nu_i$ to be the vertex such that the geodesic path from the root node to $\nu_i$ is given by $({}_ip)_{m-i}$ for $0 \leq i \leq m$. From the above construction $\{\tau_{\nu_i}(\eta_k)\}_{0 \leq i \leq m, k \in K_i}$ is an orthonormal basis for $\fX^{_mp}$. Then we can write
	\[\tau^*_\omega(\zeta) = \tau^*_{\omega'}(q_n\zeta) = \sum_{i=0}^m \sum_{k \in K_i} \alpha_{i,k} \tau^*_{\omega'} \tau_{\nu_i}(\eta_{k})\]
	where $\alpha_{i,k} \in \C$. Hence, we only need to show that each term $\tau^*_{\omega'}\tau_{\nu_i}(\eta_k) \in \scrX$. Since $\tau_{\nu_i}(\eta_k)$ is contained in ${}_mp$, observe that for any vertex $\mu$ in the ray ${}_mp$ we have
	\[\tau_{\nu_i}(\eta_k) = \rho_\mu \tau_{\nu_i}(\eta_k) = \tau^*_\mu\tau_\mu \tau_{\nu_i}(\eta_k) = \tau^*_{\mu}\tau_{\nu_i\cdot \mu}(\eta_k).\]
	Since $_mp$ is not eventually periodic, we can take $\mu$ to be in the centre of $t_\infty$. Thus we have
	\[\tau^*_{\omega'}\tau_{\nu_i}(\eta_k) = \tau^*_{\omega'}\tau^*_\mu\tau_{\nu_i\cdot \mu}(\eta_k) = \tau^*_{\omega' \cdot \mu}\tau_{\nu_i\cdot \mu}(\eta_k)\]
	where $\omega' \cdot \mu$ and $\nu_i \cdot \mu$ lie in the centre of $t_\infty$. Hence there exists $g := [t, s] \in F$ such that $\omega' \cdot \mu$ and $\nu_i \cdot \mu$ are corresponding vertices of $t$ and $s$, respectively. Then since 
	\[\tau^*_{\omega' \cdot \mu}\tau_{\nu_i\cdot \mu}(\eta_k) = \sigma_p(g)(\eta_k) \in \scrX\]
	it follows that $\scrH_{\fV^p} \subset \scrX$.
	
	Therefore, we have shown $\sigma_p \cong \oplus_{k \in K} \lambda_{F/F_{p^{(k)}}}$ where each of the rays $p^{(k)}$ belong in the equivalence class $[p]$. This latter property ensures $\lambda_{F/F_{p^{(k)}}} \cong \lambda_{F/F_p}$ which completes the proof of the claim.
	
	Now we consider the case when $p$ is eventually periodic and recall we can further assume $p$ is periodic. There is a prime word $c$ with length $d$ such that $p = c^\infty$. Let $\nu \in \Ver_p$ be the vertex such that the path from $\varnothing$ to $\nu$ is given by $c$. As in the previous proof, we will separately consider when $p$ is a straight line or not a straight line.
	
	\textbf{Claim 3.} 
	If the ray $p$ is periodic and not a straight line then $\sigma_p$ is a direct integral of monomial representations.
	
	Consider the (possibly infinite-dimensional) space $\fX^p$ and define the isometry $E$ to be the restriction of $\phi(p_d)$ on $\fX^p$ as in the proof of Theorem \ref{sigma_p decompose finite theorem}. 
	We now apply the Wold decomposition to the isometry $E$ of $\fX^p$ (see Chapter $1$ in \cite{NFBK}).
	There exist (possibly trivial) Hilbert spaces $\fK$ and $\fX$ such that up to unitary conjugacy $(\fX^p,E)$ is the space
	$$\left(\fK\otimes \ell^2(\N) \right)\oplus \fX$$
	and the operator
	$$\left( \id_\fK\otimes S\right) \oplus U$$
	where $S$ is the unilateral shift operator and $U$ is a unitary operator.
	Then arguing as in the proof of Claims $1$ and $2$, it can be seen $\fK\otimes \ell^2(\N)$ provides the representation $\id_\fK\otimes \lambda_{F/\widehat{F_p}}$
	(recall $\widehat{F_p}$ from Definition \ref{subgroup of Fp definition}). 
	Hence, up to some direct sum of $\lambda_{F/\widehat{F_p}} \in \cup_{d > 1} R_{inf, d}$, we can assume that $E$ is a unitary operator acting on some subspace $\fX \subset \fX^p$.

	By the spectral theorem for unitary operators, the spectrum of $E$ is contained in the unit circle $\bS$ and up to unitary conjugacy $(\fX, E)$ is the direct integral
	\[\int_{\bS}^{\oplus} \fX_z d\mu(z)\]
	and the multiplicative operator
	\[\int_{\bS}^{\oplus} zd\mu(z).\]
	Define $\ti \chi_z^p$ to be a representation of $F_p$ on $\fX_z$ equal to a (necessary countable) direct sum of $\chi_z^p$ where the direct sum is indexed by an orthonormal basis of $\fX_z$. It is easy to observe $\{\ti \chi_z^p\}_{z \in \bS}$ forms a measurable field of representations over $(\bS, \mu)$ because $\ti \chi_z^p$ simply acts by scalar multiplication. Hence we can define
	\[\beta^p := \int_{\bS}^{\oplus} \ti \chi_{z^{1/d}}^p d\mu(z)\]
	to be a representation of $F_p$ on $\fX$ where $d$ is the length of a period of $p$ and $z^{1/d}$ is the $d$-th root of $z$ with minimal principal argument. By a similar calculation as in Claim $3$ in the proof of Theorem \ref{sigma_p decompose finite theorem}, we can show that $\beta^p$ is equivalent to a sub-representation of $\sigma_p\restriction_{F_p}$. Let $g := [t,s] \in F_p$ and let $\nu, \omega \in \Ver_p$ be corresponding vertices of $t,s$ with lengths $dm, dn$, respectively, for some $m,n \in \N^*$. Then for all $\xi \in \fX, z \in \bS$:
	\begin{align*}
		(\beta^p(g)\xi)(z) &:= \tilde \chi_{z^{1/d}}^p(g)(\xi(z)) \\
		&= z^{d^{-1}\log_2(g'(p))}\xi(z) \\
		&= z^{n-m}\xi(z) = (E^{n-m}\xi)(z).
	\end{align*}
	Thus, $\beta^p(g)\xi = E^{n-m}\xi$ which gives:
	\begin{align*}
		\beta^p(g)\xi= E^{n-m}\xi = \tau^*_\nu \tau_\omega(\xi) = \sigma_p(g)\xi	
	\end{align*}
	where note we have identified $\xi$ with its copy $[e, \xi]$ inside $\scrH_{\fV^p}$.
	Then by following a similar argument as in Claims $4$ and $5$ in the proof of Theorem \ref{sigma_p decompose finite theorem} it follows that $\Ind_{F_p}^F \beta^p$ is equivalent to the sub-representation of $\sigma_p$ generated by the copy of $\fX$ inside $\scrH_{\fV^p}$. Therefore we obtain by applying Proposition \ref{direct integral induced prop}:
	\[\int_{\bS}^{\oplus} \Ind_{F_p}^F \ti \chi_{z^{1/d}}^p d\mu(z) \subset \sigma_p \textrm{ where } \Ind_{F_{p}}^F \ti \chi_{z^{1/d}}^p \cong \oplus_{i=1}^{\dim(\fX_z)} \Ind_{F_p}^F \chi_{z^{1/d}}^p.\] 
	Hence, all it remains is to show that the sub-representation of $\sigma_p$ generated by $\fX$ is in fact equal to the entire representation $\sigma_p$. However, this was shown in the proof of Claim $2$ in Theorem \ref{sigma_p decompose finite theorem} and a similar reasoning can be applied to complete the proof of the claim.
	
	The final remaining case is when $p$ is a straight line. Suppose $p = \ell$. Then as in the proofs of Claims $6$ and $7$ in Theorem \ref{sigma_p decompose finite theorem}, we can decompose $\sigma_p$ as a direct sum of the two orthogonal sub-representation generated by $\fX^p$ and $\tau^*_1(\fX^p)$. Adjusting the proofs of Claims $2$ and $3$ accordingly, we can obtain the following decomposition:
	\[\sigma_p \cong \left(\int_{\bS}^{\oplus} \ti \chi_{z}^\ell d\mu'(z)\right) \bigoplus \left(\oplus_{i \in I} \lambda_{F/\widehat{F_\ell}} \right)\bigoplus \left(\int_{\bS}^{\oplus} \Ind_{F_{\ell \cdot b}}^F \ti \chi_{z}^{\ell \cdot b} d\mu(z) \right) \bigoplus \left(\oplus_{j \in J} \lambda_{F/\widehat{F_{\ell \cdot b}}}\right)\]
	where $\ti \chi_z^\ell \cong \oplus_{b\in \mathcal B} \chi_z$ with $\mathcal B$ an orthonormal basis of the Hilbert space $\fX_z$, the first two sub-representations are generated by $\fX^p$, the last two sub-representations are generated by $\tau^*_1(\fX^p)$. The same reasoning yields a similar decomposition for $\sigma_p$ when $p = r$ after replacing $\ell$ with $r$, $1$ with $0$ and $b$ with $a$. This proves the first statement of the theorem.
	
	The above construction shows that the converse also holds.
\end{proof}

\subsection{Decomposition of $\sigma_\fW$}

For the subspace $\fW$ we have that $\Ran(\rho_p) \cap \fW$ is trivial for all rays $p$. 
Rather, it can be seen from item \ref{item fW alt def 1} in Proposition \ref{alt subspace presentation prop} that the space $\Ran(\rho_p)$ instead solely consists of limit points of $\scrK$ in $\scrH$ which arise from vectors in $\fW$. Therefore, we shall find that to decompose $\sigma_\fW$ we must look beyond the space $\fH$ and must instead work in the larger Hilbert space $\scrH$. However, we can still continue to apply the same techniques and strategy developed in previous proofs to decompose $\sigma_\fW$. 
Further, we do not have the advantage of studying a single equivalence class of rays at a time (this is demonstrated in Example \ref{pythag decomp ex3}).
We shall continue to use the notation defined in the previous subsections and recall $\fW$ is only non-trivial when $\fH$ is infinite-dimensional.

\begin{theorem}\label{sigma_fW decompose theorem}
	Let $(A,B)$ be a Pythagorean pair over a separable infinite-dimensional Hilbert space $\fH$ and consider the sub-representation $(\sigma_\fW, \scrH_{\fV})$ of $(\sigma_{A,B}, \scrH_{A,B})$. 
	\begin{enumerate}
		\item The representation $\sigma_\fW$ can be decomposed as a direct integral of representations belonging to the set $\cup_{d \geq 1} R_{fin, d}$ and a countable direct sum of representations belonging to the set $(\cup_{d \geq 1} R_{inf, d}) \cup R_{inf, \infty}.$
		\item Conversely, any representation as described above is a Pythagorean representation.
	\end{enumerate}
\end{theorem}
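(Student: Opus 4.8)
The plan is to run the argument of Theorems~\ref{sigma_p decompose finite theorem} and~\ref{sigma_p decompose infinite theorem} one final time, replacing throughout the space of vectors contained in a ray, $\fX^p=\Ran(\rho_p)\cap\fH$, by the subspace $\Ran(\rho_p)\subset\scrH$ living in the large Hilbert space. This substitution is forced on us: by item~\ref{item fW alt def 1} of Proposition~\ref{alt subspace presentation prop} we have $\Ran(\rho_p)\cap\fW=\{0\}$, so there is nothing to study at the level of $\fH$. Instead, every $\xi\in\fW$ satisfies $\xi=\sum_i\rho_{p^{(i)}}(\xi)$ with each $\rho_{p^{(i)}}(\xi)$ a non-zero limit point of $\scrK$ lying in $\Ran(\rho_{p^{(i)}})$. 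Since $\fW$ is $\phi$-invariant and $\scrK_\fW$ is dense in $\scrH_\fW$, the first step is to show that each $\rho_p(\fW)$ is contained in $\scrH_\fW$ (via $\rho_{p,n}(\xi)=\tau^*_{p,n}(p_n\xi)$ with $p_n\xi\in\fW$, then pass to the SOT limit) and that $\scrH_\fW$ is the closed orthogonal direct sum of the spaces $\scrX^p:=\overline{\rho_p(\fW)}\subseteq\Ran(\rho_p)$ as $p$ ranges over all rays, the summands being mutually orthogonal because $\Ran(\rho_p)\perp\Ran(\rho_q)$ for $p\neq q$ (proof of item~\ref{item fX orthogonal} of Proposition~\ref{subspace fH orthogonal prop}).

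The decisive difference from the $\sigma_\fV$ case is that a single vector of $\fW$ is spread over countably many rays drawn from possibly several equivalence classes, so that the naive class-by-class decomposition of $\fH$ fails: Example~\ref{pythag decomp ex3} already exhibits $\fH=\fW$ with $\fW^\ell=\fW^r=\{0\}$ yet $\scrH_\fW\neq\{0\}$. One therefore cannot reduce to a single $\sigma_p$ and must work at the level of $\scrH$. The remedy is to group the rays into $F$-orbits (which for rays coincide with the $\sim$-equivalence classes): the identity $\sigma(g)=\sum_i\tau^*_{\nu_i}\tau_{\omega_i}$ shows that $\sigma(g)$ carries $\Ran(\rho_p)$ onto $\Ran(\rho_{g(p)})$, so the closed span of $\{\scrX^q:q\in F\cdot p\}$ is an $F$-invariant subspace and $\scrH_\fW$ is the orthogonal direct sum of these orbit-blocks. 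On each block the stabiliser of $p$ is $F_p$, and exactly as before the representation is induced from the action of $F_p$ on $\scrX^p$.

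To analyse that action, for a periodic $p=c^\infty$ of period length $d$ with associated vertex $\nu$, I would consider the isometry $E:=\tau_{p,d}\restriction_{\scrX^p}$ of $\scrX^p$; it is well defined and isometric because $\tau_\nu$ restricts to an isometry on $\Ran(\rho_\nu)\supseteq\Ran(\rho_p)$, and $\tau_{p,d}$ preserves both $\Ran(\rho_p)$ (periodicity) and $\scrH_\fW$ ($\phi$-invariance). The Wold decomposition $\scrX^p\cong(\fK\otimes\ell^2(\N))\oplus\fX$ with $E\cong(\id_\fK\otimes S)\oplus U$, the spectral decomposition of the unitary part $U$ over $\bS$, and the matrix-coefficient computations of Claims~3--5 of Theorem~\ref{sigma_p decompose finite theorem} (verified verbatim for $\xi\in\scrX^p$, using $\tau_\nu\restriction_{\Ran(\rho_p)}=E$ and $\sigma(g)=\tau^*_\nu\tau_\omega$) then reproduce the pure-shift summands as copies of $\lambda_{F/\widehat{F_p}}\in R_{inf,d}$ and the unitary part as $\int_\bS^\oplus\Ind_{F_p}^F\chi_{z^{1/d}}^p\,d\mu(z)$. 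Aperiodic rays are handled by Claims~1--2 of Theorem~\ref{sigma_p decompose infinite theorem}, yielding countable direct sums of $\lambda_{F/F_p}\in R_{inf,\infty}$, and the two straight-line classes $[\ell],[r]$ pick up the extra one-dimensional and quasi-regular summands of $R_{fin,1}$ and $R_{inf,1}$ exactly as in Claims~6--8. For the converse, I would invoke closure of the class of Pythagorean representations under direct sums and direct integrals together with the realisation of each building block by a $\fW$-type construction modelled on Examples~\ref{pythag decomp example} and~\ref{pythag decomp ex3}.

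The main obstacle I anticipate is the combination of Step~1 with the well-definedness of $E$. Because the spaces $\scrX^p$ consist of limit points of $\scrK$ rather than honest vectors of $\fH$, one must check carefully that $\rho_p(\fW)\subseteq\scrH_\fW$, that the orbit-blocks genuinely exhaust $\scrH_\fW$ (so that no portion of $\scrH_\fW$ escapes the family $\{\Ran(\rho_p)\}$), and that every snipping/grafting identity $\tau^*_\nu\tau_\omega$ used in the finite-dimensional matrix-coefficient arguments survives the passage to these limit vectors. Once this bookkeeping is secured, the representation-theoretic content is identical to the $\sigma_\fV$ case and the listed decomposition follows.
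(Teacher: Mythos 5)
Your proposal follows essentially the same route as the paper's own proof: the same key substitution of $\scrX^p := \Ran(\rho_p) \subset \scrH_\fW$ for the space $\fX^p = \Ran(\rho_p)\cap\fH$ (which is trivial here), the same orthogonality and exhaustion arguments via countable sums of vectors in ranges of ray projections coming from item vi of Proposition \ref{alt subspace presentation prop}, the same analysis of the isometry $E = \tau_{p,d}\restriction_{\scrX^p}$ by Wold decomposition plus the spectral theorem for periodic rays, the same reduction to Claims 1--2 of Theorem \ref{sigma_p decompose infinite theorem} for aperiodic rays, and the same separate treatment of the straight-line classes. One minor inaccuracy: $F$-orbits of rays do \emph{not} coincide with $\sim$-classes for $[\ell]$ and $[r]$ (the orbit of $\ell$ is just $\{\ell\}$ since $F$ fixes the endpoints, while $[\ell]\setminus\{\ell\}$ is the orbit of $\ell\cdot b$), but since you handle the straight-line classes separately --- exactly as the paper does by adjoining $\ell\cdot b$ and $r\cdot a$ to its set $P$ of representatives --- this does not affect the argument.
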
	
	
\begin{proof}
	Consider $A,B,\fH, (\sigma, \scrH) := (\sigma_\fW, \scrH_\fW)$ as above. For a ray $p$, define $\scrX^p := \Ran(\rho_p) \subset \scrH$.
	The subspace $\scrX^p$ will play the role of the subspace $\fX^p = \Ran(\rho_p) \cap \fH$ of vectors contained in $p$ used in the decomposition of $\sigma_p$. Thus, we will see how the techniques applied to the decomposition of $\sigma_p$ can also be applied to the decomposition of $\sigma_\fW$ after making some minor adjustments.

	Note, if $z \in \scrX^p$ for some ray then $\tau^*_\nu \tau_\omega(z) \in \scrX^q$ for some ray $q$ for all $\nu, \omega \in \Ver$. In fact, $\tau^*_\nu \tau_\omega$ is an isomorphism between $\scrX^p$ and $\scrX^q$ if ${}_np = {}_mq$ where $\length(\nu) = m$, $\length(\omega) = n$, $\nu \in \Ver_q$, $\omega \in \Ver_p$. Thus, if $p \sim q$ then $\scrX^p$ is non-trivial if and only if $\scrX^q$ is non-trivial. 
	Define $P$ to be a subset of representatives of rays that are pairwise non-equivalent so that they are either periodic or not eventually periodic, and give rise to non-trivial $\scrX^p$. 
	In addition, if the straight ray $\ell$ is in $P$ then also include $\ell \cdot b$ in $P$. Similarly, if $r$ is in $P$ then include $r \cdot a$ in $P$ (recall $\ell = (\dots aaa)$ and $r = (\dots bbb)$). Observe $\scrX^{\ell \cdot b} = \tau^*_1(\scrX^\ell),$ $\scrX^{r \cdot a} = \tau^*_0(\scrX^r)$ and these two subspaces behave similarly to the vectors $\tau^*_1(\xi_j), \tau^*_0(\xi_j)$ in Claim $6$ in the proof of Theorem \ref{sigma_p decompose finite theorem}.
	
	{\bf First:} we shall show that the sub-representations of $\sigma$ generated by $\scrX^p$ and $\scrX^q$ are orthogonal when $p,q \in P, p \neq q$. 
	
	It is sufficient to show that given $y \in \scrX^p, z \in \scrX^q$ and $g\in F$ we have $\langle \sigma(g)y, z \rangle = 0$. Begin by additionally assuming $p \not \sim q$ (meaning that we exclude the cases where $p$ or $q$ is equal to $\ell$ or $r$). It is easy to see $\sigma(g)y \in \scrX^{p'}$ for some ray $p'$ which belongs in the same equivalence class as $p$ and thus distinct from the ray $q$. Therefore $\sigma(g)y$ is orthogonal to $z$ because $\rho_h$ and $\rho_{h'}$ are orthogonal projections if $h \neq h'$. Next, assume $p = \ell$ (in the case that $\ell \in P$) and relax the assumption $p \not \sim q$. Then $\sigma(g)y$ will also belong in $\scrX^p$ as a consequence of all elements in $F$ fixing the point $0$. Thus, we again have $\langle \sigma(g)y, z \rangle = 0$. The same reasoning also applies when $p = r$. Therefore, $\{\scrX^p\}_{p \in P}$ induces orthogonal sub-representations of $\sigma$.
		
	{\bf Second:} as in the previous proofs, we shall consider different types of rays and determine the representation induced by vectors in $\scrH$ contained in the range of the projections associated to those rays. In that purpose we fix a vector $z\in \scrX^p$ for a given $p \in P$.
		
	Suppose $p \in P$ is not an eventually periodic ray. We have $\langle \tau_{p, m}(z), \tau_{p, n}(z) \rangle = 0$ for $m \neq n$ because $\tau_{p, m}(z) \in \scrX^{_mp}, \tau_{p, n} \in \scrX^{_np}$ and ${}_mp \neq {}_np$. In addition, $\tau_\nu(z) = 0$ for all $\nu \notin \Ver_p$. Then it follows by the same reasoning as in Claim $1$ in the proof of Theorem \ref{sigma_p decompose infinite theorem} that the sub-representation of $\sigma$ generated by $z$ is equivalent to $\lambda_{F/F_p}$.
		
	Suppose now that $p \in P$ is a periodic ray. Define $c, d, \nu$ as in the proof of Theorem \ref{sigma_p decompose finite theorem}. Denote $E$ to be the isometry formed by the restriction of $\tau_\nu$ on $\scrX^p$. This is well-defined because if $z \in \scrH$ is in the range of $\rho_p$ then $\tau_\nu(z)$ is in the range of $\rho_{{}_dp} = \rho_p$. Observe $E$ plays the same role as $E$ in the previous proofs by noting $\phi(p_d)\xi = \tau_\nu(\xi)$. However, now $E$ is defined on a subspace of $\scrH$ rather than $\fX^p \subset \fH$. Regardless, by the same reasoning as in Claim $3$ (and the discussion following) in the proof of Theorem \ref{sigma_p decompose infinite theorem} and using the same notation as before, we have the sub-representation of $\sigma$ generated by $\scrX^p$ consists of a direct sum of quasi-regular representations $\lambda_{F/\widehat{F_{p_m}}}$ in direct sum of a  direct integral:
	$$\begin{cases} \int_{\bS}^{\oplus} \Ind_{F_p}^F \ti \chi_{z}^p d\mu(z), &\text{ if $p$ is not a straight line}\\
	\int_{\bS}^{\oplus} \ti \chi_{z}^p d\mu'(z), &\text{ otherwise} 
	\end{cases}.$$
	
	Taking the direct sum of the above representations as $p$ runs over $P$ gives the decomposition stated in the theorem. Therefore, to prove the first statement in the theorem, all that remains to be shown is that the subspaces $\{\scrX^p\}_{p \in P}$ generates the entire space $\scrH$ (where here we assume that $\fH=\fW$). Define $\scrX := \cspan\{\sigma(F)(\scrX^p)\}_{p \in P} \subset \scrH$. We are required to show $\scrH \subset \scrX$. By density, it is sufficient to show $\scrK \subset \scrX$.
	
	Fix a non-zero vector $z := [t, \xi] \in \scrK$. We have that $z = \sum_{\omega \in \Leaf(t)} \tau^*_{\omega} (\xi_\omega)$ where $\xi_\omega \in \fW$ is the component of $\xi$ corresponding to the leaf $\omega$. By item \ref{item fW alt def 1} in Proposition \ref{alt subspace presentation prop} each $\xi_\omega$ is a countably infinite sum of non-zero orthogonal vectors in $\scrH$ where each vector belongs in the range of a projection associated to some ray. If $x \in \scrH$ is such a vector it is easy to see $\tau^*_\omega(x)$ also belongs in the range of a projection associated to a ray. Therefore, we can write
	\[z = \sum_{i \in \N} x_i\]
	where each $x_i \in \scrH$ is non-zero and belongs in $\Ran(\rho_{p^{(i)}})$ for a ray $p^{(i)}$ and $p^{(i)} \neq p^{(j)}$ if $i \neq j$. If $p^{(i)}$ is a straight line then $p^{(i)} \in P$ and clearly $x_i \in \scrX$. Hence, suppose $p := p^{(i)}$ is not a straight line. Then there is ray $q \in P$ which is not a straight line such that $p \sim q$ and thus ${}_mp = {}_nq$ for $m,n \in \N$ (if $p$ is eventually straight then $q$ is either $\ell \cdot b$ or $r \cdot a$). Let $\mu$ (resp. $\omega$) be the vertex in $p$ (resp. $q$) that has length $m$ (resp. $n$). Since $p,q$ are not straight lines, we can take $m,n$ large enough so that the vertices $\mu, \omega$ lie in the centre of $t_\infty$. Then there exists $g \in F$ such that $\mu, \omega$ are corresponding vertices. It follows $\sigma(g)x_i$ belongs in $\Ran(\rho_q)$ and thus $x_i \in \sigma(F)(\scrX^q) \subset \scrX$.
	
	Define $z_k = \sum_{i=0}^k x_i$ for $k \in \N$ noting that $z_k \in \scrX$. Further,
	\[\norm{z - z_k}^2 = \norm{\sum_{i\geq k+1} x_i}^2 = \sum_{i \geq k+1} \norm{x_i}^2.\]
	Since $\sum_{i \in \N} \norm{x_i}^2 = \norm{z}^2 < \infty$ it follows the above sum converges to $0$ as $n$ tends to infinity which proves that $z \in \scrX$ as $\scrX$ is closed. This concludes the proof of the first part of the theorem and as before, the converse easily follows from the above construction.
\end{proof}

\begin{example}\label{ex:int-atomic}
	Recall, the discussion following Proposition \ref{subrep of sigma from fH proposition} explains how Pythagorean representations are closed under taking direct sums and direct integrals. Further, from the theorems in this section, it is clear that the class of \textit{\at{}} Pythagorean representations are closed under taking countable direct sums. However, interestingly the direct integral of atomic representations may not necessary be \at{}. \\
	Indeed, let $m$ be any diffuse measure of full support on the Cantor space endowed with its standard Borel $\sigma$-algebra. As well, consider any measurable field of \at{} Pythagorean representations $\{(\sigma_p, \scrH_p)\}_{p \in \cC}$ (for example, take $(\sigma_p, \scrH_p) = (\lambda_{F/F_p}, \ell^2(F/F_p))$). Then the direct integral
	\[(\sigma, \scrH) := \left(\int_{p \in \cC}^\oplus \sigma_p dm(p), \int_{p \in \cC}^\oplus \scrH_p dm(p)\right)\]
	is again a Pythagorean representation. Further, it can be verified that
	\[\rho_\nu\left(\int_{p \in \cC} \xi(p)dm(p)\right) = \int_{I_\nu} \xi(p)dm(p)\]
	for all $\nu \in \Ver, \xi \in \scrH$.
	Thus, from the assumption that $m$ is a diffuse measure, we deduce that $q_n(\xi) \rightarrow 0$ as $n \rightarrow \infty$ for all rays $q$ and all vectors $\xi \in \scrH$. Therefore, $\sigma$ is in fact a \textit{diffuse} Pythagorean representation. \\
	Intuitively, the above conclusion can be deduced from the description of vectors in $\fV, \fW$ as being contained in countably many rays as described in Remark \ref{alt subspace presentation remark}. This conclusion is why the statements in Theorems \ref{sigma_p decompose infinite theorem} and \ref{sigma_fW decompose theorem} specify only a \textit{countable direct sum} of representations in $(\cup_{d \geq 1} R_{inf, d}) \cup R_{inf, \infty}$.
\end{example}	
		
\tb{
\begin{remark} \label{rem:sigma_fW-decompose}
	As shown in Remark \ref{rem:fH-decomp}, when $(\fU \oplus \fV \oplus \fW)^\perp \cap \fH$ is infinite-dimensional then we no longer have $\sigma = \sigma_\fU \oplus \sigma_\fV \oplus \sigma_\fW$. However, we can still obtain a decomposition of $\sigma$ into ``diffuse'' and ``atomic'' parts. 
	Indeed, define $\scrH_d \subset \scrH$ to be the subspace containing all vectors $z \in \scrH$ such that $\rho_p(z) = 0$ for all rays $p$, and define $\scrH_a \subset \scrH$ be the subspace containing all vectors $z \in \scrH$ such there exists a countable set of rays $\{p^{(i)}\}_{i\in I}$ satisfying $z = \sum_{i \in I} \rho_{p^{(i)}}(z)$ (this definition is analogous to the definition in \cite{DHJ-atomic15}, see the end of Section \ref{sec:preliminaries} for a comparison between this study and the current study). Observe that $\scrH_\fU \subset \scrH_d$ (resp. $\scrH_{\fV \oplus \fW} \subset \scrH_a$) and the two subspaces coincide when $\fZ$ is finite-dimensional. Furthermore, the subspaces $\scrH_d$, $\scrH_a$ are orthogonal since vectors of $\scrH_d$ are in $\ker(\rho_p)$ where else vectors of $\scrH_a$ are in the span of $\Ran(\rho_p)$ for rays $p$.
	\\
	It is easy to verify that $\scrH_d$, $\scrH_a$ are closed under $\tau_\nu$, $\tau_\nu^*$ for all vertices $\nu$ and thus form sub-representations $\sigma_d$, $\sigma_a$ of $\sigma$. In particular, we can consider the restriction of $\tau_0$, $\tau_1$ to $\scrH_d$ which forms a P-pair $(\tau_0\restriction_{\scrH_d}, \tau_1\restriction_{\scrH_d})$. Analogously to Proposition \ref{subrep of sigma from fH proposition}, it is clear that the resulting P-representation is equivalent to $\sigma_d$ and thus $\sigma_d$ is Pythagorean. The same argument shows that $\sigma_a$ is Pythagorean.
	\\
	Considering these enlarged subspaces, we now have $\sigma = \sigma_d \oplus \sigma_a$. Indeed, consider $z \in \scrH$. Let $P$ be the set of all rays $p$ such that $\rho_p(z) \neq 0$. Note $P$ must be countable via a norm argument since $\{\rho_p\}_{p \in \cP}$ are pair-wise orthogonal projections. Set $z_a := \sum_{p\in P}\rho_p(z)$ and $z_d := z - z_a$ from which it becomes immediate that $z \in \scrH_d \oplus \scrH_a$.
	\\ 
	Finally, by construction it is obvious that $\sigma_d$ is diffuse. Moreover, we can apply the exact same proof of Theorem \ref{sigma_fW decompose theorem} to show that the theorem also holds for $\sigma_a$. This justifies $\sigma_d$ (resp. $\sigma_a$) being termed as the diffuse (resp. atomic) part of $\sigma$. 
	\\
	Therefore, from Theorem \ref{complete scrH decomposition theorem} and from the above, any Pythagorean representation can be decomposed into diffuse and atomic parts ($\scrH_d$ and $\scrH_a$, respectively) which have greatly contrasting structure. When $\fZ$ is finite-dimensional, the diffuse and atomic parts are induced from $\phi$-invariant subspaces of $\fH$, $\fU$ and $\fV \oplus \fW$, respectively. This is shown in Theorem \ref{complete scrH decomposition theorem}. When $\fZ$ is infinite-dimensional then the diffuse and atomic parts are no longer induced from subspaces of $\fH$ and rather must be directly defined as subspaces of $\scrH$ as done above. 
\end{remark}
}

\begin{proof}[Proof of Corollary \ref{cor:B}]
Consider a Pythagorean pair $(A,B)$ acting on $\fH$ and its associated representation $\sigma:F\act\scrH$.
Using the main results of our previous articles we only need to prove the two reverse implications that is: $\sigma$ weak-mixing (resp.~Ind-mixing) implies $\lim_nA^n\xi=\lim_nB^n\xi=0$ (resp.~$\lim_n p_n\xi=0$ for all ray $p$) for all vector $\xi\in\fH$ \cite{Brothier-Wijesena22}.
Assume there exists $\xi\in\fH$ so that $\lim_nA^n\xi\neq 0$. 
The proof of Theorem \ref{sigma_p decompose finite theorem} implies that the one-dimensional representation $\chi_\ell^\varphi$ of $F$ is contained in $\sigma$ for a certain $\varphi\in S^1$. Hence, $\sigma$ is not weakly mixing. 
A similar proof works by swapping $A$ by $B$ and the ray $\ell=\dots 00$ by $r=\dots 11$.
This proves the first statement.

Assume now that there exists $\xi\in\fH$ and a ray $p$ so that $\lim_n p_n\xi \neq 0$.
This implies that $\sigma$ contains either $\Ind_{F_p}^F\chi^p_\varphi$ for a certain $\varphi\in S^1$ or the quasi-regular representation $\lambda_{F/\widehat F_p}$. In both cases the representation $\sigma$ is not Ind-mixing with finishes the proof. 
\end{proof}


\newcommand{\etalchar}[1]{$^{#1}$}

\end{document}